\documentclass[12pt,oneside]{article}
\usepackage{amsmath,amssymb,amsfonts,amsthm}
\usepackage{color}
\usepackage[all]{xy}
\usepackage{graphicx}
\usepackage{color}
\usepackage{makeidx}
\usepackage{mathtools}
\usepackage{multirow}
\usepackage{lscape}
\usepackage{rotating}
\textheight = 9.3in            %45\baselineskip
\textwidth = 6in \leftmargin=1.25in \rightmargin=1.25in
\topmargin=0.75in
\parindent=0.3in
\hoffset -1.3truecm \voffset -3truecm

%%%%%% This is to define \goth %%%%%%%%

%%%%%%%%%%%%%%%%%%%%%%%%%%%%
% MATH -------------------------------------------------------------------

\newcommand{\T}{{\cal T}}
\newcommand{\C}{{\cal C}}

\newcommand{\Real}{\mathbb R}

\newcommand{\To}{\longrightarrow}

\newcommand{\p}{\pi^{-1}(TM)}

\newcommand {\cppp}{\mathfrak{X}(TM)}

\def\pa{\partial}
\def\paa{\dot{\partial}}

%def\:{{\em\,:}} def\({{\em (}} def\){{\em )}} def\[{{\em [}}
%\def\]{{\em ]}}
%\def\1#1{\big#1}
%\def\2#1{\Big#1}
%\def\3#1{\bigg#1}
%\def\4#1{\Bigg#1}

\setlength\arraycolsep{2pt}    %For suitable spacing in "Arrays"

\newtheorem{thm}{Theorem}[section]

\newtheorem{lem}[thm]{Lemma}
\newtheorem{prop}[thm]{Proposition}
\newtheorem{defn}[thm]{Definition}

  %This is to put a period after Def.

\numberwithin{equation}{section}
            %To remove the enumeration

\newcommand\undersym[2]{\raisebox{-7pt}{\tiny$#2$}{\kern-8pt}\mbox{$#1$}}
\newcommand\undersymm[2]{\raisebox{-8pt}{\tiny$#2$}{\kern-15pt}\mbox{$#1$}}
\newcommand\overast[1]{\raisebox{9pt}{\small$\ast$}{\kern-9pt}\mbox{$#1$}}
\newcommand\overlind[1]{\raisebox{10pt}{\small$\overline{{\hspace{2pt}}\star}$}{\kern-7.5pt}\mbox{$#1$}}
\newcommand\overlinc[1]{\raisebox{10pt}{\tiny$\overline{{\hspace{2pt}}\circ}$}{\kern-7.5pt}\mbox{$#1$}}
\newcommand\overlina[1]{\raisebox{10pt}{\small$\overline{{\hspace{1pt}}\ast}$}{\kern-7.5pt}\mbox{$#1$}}
\newcommand\overcirc[1]{\raisebox{10pt}{\tiny{$\circ$}}{\kern-7.5pt}\mbox{$#1$}}
\newcommand\overdiamond[1]{\raisebox{10pt}{\small$\star$}{\kern-7.5pt}\mbox{$#1$}}

\newcommand\tovercirc[1]{\raisebox{5pt}{\tiny{$\circ$}}{\kern-5.5pt}\mbox{$#1$}}
\newcommand\toverdiamond[1]{\raisebox{5pt}{\tiny$\star$}{\kern-5.5pt}\mbox{$#1$}}
\newcommand\toverast[1]{\raisebox{5pt}{\tiny$\ast$}{\kern-5pt}\mbox{$#1$}}

\makeindex

\begin{document}
\title{\bf{The existence and uniqueness of    Hashiguchi   connection in KG-approach } }
\author{\bf{S. G. Elgendi$^{1}$  and A. Soleiman$^{2}$}}
\date{}
%\thanks{\it Department of Mathematics, etc}
%\pagestyle{fancy}
             % End of preamble and beginning of text.
\maketitle                     % Produces the title.
\vspace{-1.0cm}
\begin{center}
 $^{1}$ Department of Mathematics, Faculty of Science, Islamic University of Madinah,  Madinah, Kingdom of Saudia Arabia
\vspace{0.2cm}
\\
{$^{2}$ Department of Mathematics, Faculty of Science,
Al Jouf University, Skaka,  Kingdom of Saudia Arabia
}
\end{center}
\vspace{-0.5cm}
\begin{center}
E-mails: selgendi@iu.edu.sa, salahelgendi@yahoo.com\\
{\hspace{1.8cm}}asoliman@ju.edu.sa, amrsoleiman@yahoo.com
\end{center}

\smallskip
 \maketitle
\smallskip
\begin{abstract}
In this study, we treat intrinsic Finsler geometry using the Klein-Grifone approach (KG-approach).  A uniqueness and existence theorem   for the Hashiguchi connection on a Finsler manifold is  investigated intrinsically (in coordinate-free fashion). Calculations are made for the Hashiguchi connection's torsion and curvature tensors.   Some properties are examined, together with the Bianchi identities of the associated curvature and torsion tensors.  An overview of the four fundamental  linear connections in Finsler geometry in the KG-approach is provided globally for comparison's sake and completeness.
\end{abstract}

\vspace{7pt}
\medskip\noindent{\bf Keywords: \/}\, Barthel connection; Canonical spray; Berwald connection;   Hashiguchi connection; torsion and curvature tensors; Bianchi identities.\\

\medskip\noindent{\bf  MSC 2020:\/} 53B40, 53C60.

%%%%%%%%%%%%%%%%%%%%%%%%%%%%%%%%%%%%%%%%%%%%%%%%%%%%%%%% %%%%%%%%%%%%%%%%%%%%%%%%%%%%%%%%%%%%%%%%%%%%%%%%%%%%%%%%%%%%%%%%%%%%%%%%%%%%%%%%%%%
%\vspace{1cm}
%Introduction
%\vspace{30truept}\centerline{\Large\bf{Introduction}}\vspace{12pt}

\section{Introduction}

~\par
Linear connections are fundamental tools in various areas of mathematics and physics, especially, in Finsler geometry and general relativity. In Finsler geometry, the theory of linear connections provide a framework for studying the geometry of spaces where the length of a curve depends not only on  position but also on  direction. In general relativity, the Levi-Civita connection is a specific type of linear connection that describes the curvature of spacetime, which is essential for understanding gravity.\\

 The most poplar  and commonly used approaches to intrinsic  Finsler
geometry are and the pullback (PB-) approach (cf.
\cite{r58,r61,r74, r44})and the Klein-Grifone (KG-) approach (cf. \cite{r21,
r22,r27, szilasi1,Chern}. Even though there are certain relations between the two approaches, each has its own  geometry that is very different from the other.\\

There exists a canonical linear connection in Riemannian geometry on a manifold $M$, and a similar canonical linear connection in Finsler geometry exists due to E. Cartan. Nevertheless, this Cartan  connection is defined on double tangent bundle of $M$ (in the KG-approach) or on the pullback bundle of $M$ (in the PB-approach).\\

By the four fundamental Finsler connections, we mean the connections that   are   introduced  by Berwald, Cartan, Chern, and Hashiguchi in local Finsler geoemetry. Soleiman \cite{amr} has studied    the four fundamental linear connections on a Finsler manifold  in the PB-approach of global Finsler geometry.  Namely, he established the existence and uniqueness theorems for these connections and calculated their associated torsion and curvature tensors. Also, he investigated some applications and changes of a Finsler manifold. On the other hand, Grifone \cite{r22} investigated the Cartan and Berwald connections in the KG-approach. In \cite{Chern}, N. L. Youssef and Elgendi addressed the existence and uniqueness theorem of the Chern connection employing the KG-approach, as well as the related curvature and torsion tensors and their properties.      The Chern and Hasiguchi connections were studied by Szilasi and Vincze \cite{szilasi}, although they did so by lifting vector fields to the tangent bundle. The KG-approach has not, to the best of our knowledge, established the existence and uniqueness theorems for the Hashiguchi connection from a purely global standpoint.\\

 In this investigation, we study the existence and uniqueness  of Hashiguchi connection's theorem on a Finsler manifold, applying the KG-approach to Finsler geometry. The formulae for this connection's curvature and torsion tensors are established.
Furthermore, we prove certain properties and the Bianchi identities of the curvature and torsion tensors.

\medskip

The following is the paper's structure. We provide the material that will be needed for the duration of the current work in the first section. We provide a brief overview of the Fr\"{o}licher-Nijenhuis formalism pertaining to the vector forms and derivation, as well as the basics of KG-approach to intrinsic Finsler geometry.
 Aside from certain fundamentals regarding Berwald and Chern connections, we focus on the most significant features and equations related to the curvature tensors of Cartan connection  in the second section.
 We establish the Hashiguchi connection's existence and uniqueness theorem in the third section. We obtain the equations for this connection's curvature and torsion tensors. We further investigate specific features of the curvature tensors of the Hashiguchi connection as well as the Bianchi identities.

 \medskip

To provide a complete picture, we summarize the four essential linear connections in Finsler geometry as defined in the KG-approach. Additionally, an appendix offers detailed local formulas and comparisons with the PB-approach, aiding in a deeper understanding of these connections.

%%%%%%%%%%%%%%%%%%%%%%%%%%%%%$$$$$$$$$$$$$$$$$$ $$$$$$$$$$$$$$$$$$$$$$$$$$$$$$$$&&&&&&&&&&&&&&&&&&&&&&&&&&&&&&&&&&&&&&&&&&&&&&&&&&
\section{Preliminaries}
This section provides a basic introduction to the KG-approach, a framework for studying global Finsler geometry. For a more comprehensive understanding, please consult \cite{r21,r22,r27}. Throughout this paper, we will assume that all geometric objects are infinitely differentiable.

The $n$-dimensional smooth manifold $M$ is denoted throughout, and the $\Real$-algebra of $C^\infty$-functions on the manifold $M$ is denoted by $C^\infty(M)$.  Moreover, the $C^\infty (M)$-module of vector fields on $M$ is denoted as $\mathfrak{X}(M)$.    The subbundle of nonzero vectors tangent to $M$ is $\T M$, and $TM$ is the tangent bundle of $M$. We denote the vertical subbundle by $V(TM)$.  The exterior derivative of $f$ is $df$, while the derivative $ i_{\eta }$ is the interior product corresponding to $\eta  \in\mathfrak{X}(M)$. In addition, for    a vector form $K$ we have the derivative $ d_{K}:=[i_{K},d]$. As a special case, $\mathcal{L}_\eta $  is the Lie derivative in direction of $\eta \in\mathfrak{{X}}(M)$.

The double tangent bundle $T(\T M)$ and the pullback bundle $\pi^{-1}(TM)$ are related by  short exact sequence:
$$0\longrightarrow
 \pi^{-1}(TM)\stackrel{\gamma}\longrightarrow T(\T M)\stackrel{\rho}\longrightarrow
\pi^{-1}(TM)\longrightarrow 0 ,\vspace{-0.1cm}$$
 where $\rho := (\pi_{\T M},\pi)$ and $\gamma (u,v):=j_{u}(v)$ define the bundle morphisms $\rho$ and $\gamma$, respectively, and $j_{u}$ is the natural isomorphism $j_{u}:T_{\pi_{M}(v)}\longrightarrow T_{u}(T_{\pi_{M}(v)}M)$.  The natural almost tangent structure of $T M$ is the vector $1$-form $J$ on $TM$ defined by $J:=\gamma\circ\rho$. The canonical (Liouville) vector field, or fundamental vector field on $TM$,   is  ${C}:=\gamma\circ\overline{\eta}$, where $\overline{\eta}$ is the vector field on $\pi^{-1}(TM)$ defined by $\overline{\eta}(u)=(u,u)$.

We will require the Fr\"{o}licher-Nijenhuis bracket evaluation in specific particular situations for this work \cite{r20}:

 Let $L$ be  a vector $\ell$-form, then for     $\eta \in \mathfrak{X}(M)$, and for all $\eta _{1}, ..., \eta _{\ell}\in
\mathfrak{X}(M)$, we have
$$[\eta ,L](\eta_{1},...,\eta _{\ell})=[\eta , L(\eta _{1},...,\eta _{\ell})]-\sum_{i=1}^{\ell}
L(\eta _{1},..., [\eta ,\eta _{i}],...,\eta _{\ell}).$$
Particularly, for  a vector $1$-form $L$, we get
$$[\zeta ,L]\xi =[\zeta , L\xi ]-L[\zeta ,\xi ].$$
 In addition, for vector $1$- forms $K, L$, then  for all $\zeta ,
\eta \in \mathfrak{X}(M)$,
\begin{eqnarray*}
% \nonumber to remove numbering (before each equation)
  [K,L](\zeta ,\eta )&=& [K\zeta ,L\eta ]+[L\zeta ,K\eta ]+K L[\zeta ,\eta ]+L K[\zeta ,\eta ] \\
   && -K[L \zeta ,\eta ]-K[ \zeta ,L \eta ]-L[K \zeta ,\eta ]-L[ \zeta ,K \eta ].
\end{eqnarray*}
Now, for a vector $1$-form $K$, the Nijenhuis torsion of   $K$ is the vector $2$-form $N_{K}:=\frac{1}{2}[K,K]$ and given by
\begin{equation}\label{Nk}
     N_{K}:= \frac{1}{2}[K,K](\zeta ,\xi )=[K\zeta ,K\xi ]+K^{2}[\zeta ,\xi ] -K[K \zeta ,\xi ]-K[ \zeta ,K
     \xi ].
\end{equation}
The following features can be demonstrated for the natural almost tangent structure $J$:
\begin{equation}\label{JJ}
  [J,J]=0, \ \,  J^{2}=0 \ \  [{C}, J] = -J,
  \ \ \text{Im}(J) = Ker (J) = V (T M),
\end{equation}
If $i_{J\eta }\omega=0$ for all $  \eta \in \cppp$, then this indicates that a scalar $p$-form $\omega$ is semi-basic. Also, a vector $\ell$-form $K$ is semi-basic if and only if $ JK=0\,\, \text{and}\,\, i_{J\eta }K=0, $ for all $  \eta \in \cppp$.
 In the case where $\mathcal{L}_C\omega=r\omega$, a scalar $\ell$-form $\omega$ is homogeneous of degree r.
  For every vector $\ell$-form $L$ such that $[C,L]=(r-1)L$, the vector is homogeneous of degree $r$, or $h(r)$. That is,  $J$ is $h(0)$.

A a semispray $S$ on $M$ is a vector field   on $TM$ with the properties that $S$ is  $C^{\infty}$ on $\T M$,   $C^{1}$ on $TM$, and    $JS = C$. A spray  is a semispray  $S$ as well as it is  homogeneous of degree $2$, i.e, $[{C},S]= S $.

If vector $1$-form $\Gamma$  on $TM$ satisfies that $J \Gamma=J, \ \Gamma J=-J$,  smooth on $\T M$, and $C^{0}$ on $TM$, then    $\Gamma$  constitutes a nonlinear connection on the manifold  $M$.
The definitions of the vertical projectors $v$ and the horizontal   $h$ attached to $\Gamma$ are provided as follows: $$v:=\frac{1}{2} (I-\Gamma),\, h:=\frac{1}{2} (I+\Gamma).$$
As a result, we have the direct sum decomposition, generated by $\Gamma$, of the double tangent bundle $TTM$ as follows
$$TT M=V(TM)\oplus H(TM)$$
where $H(TM)$ refers to the horizontal subbundle (or simply, bundle) $V(TM)$ the vertical bundle. Moreover,  the horizontal bundle induced by $\Gamma$ is given by $H(TM):=Im \, h = \ker\,v $ and, on the other hand,  the vertical bundle is given by $\,\,V(TM):= Im \, v=Ker \, h$. We shall refer to the elements of $V(TM)$ (resp. $H(TM)$) as $v\zeta $ (resp. $h\zeta $). We have the properties  $J v=0, \,\,\, v J=J, \,\,\, J h =J, \,\,\, h J=0$. Moreover, if $[{C},\Gamma]=0$, then $\Gamma$   is homogeneous.

A semispray $S$ that is horizontal with regard to $\Gamma$ can be associated with each nonlinear connection $\Gamma$. This semispray is denoted by $S=hS'$, where $S'$ is an arbitrary semispray. Furthermore, a semispray  attached to a homogeneous $\Gamma$ is a spray.

The torsion   of a nonlinear connection   $\Gamma$  can defined by  a vector $2$-form on $TM$, precisely,   $t:=\frac{1}{2} [J,\Gamma]$.
Moreover, the vector $2$-form, denoted by $\mathfrak{R}:=-\frac{1}{2}[h,h]$, gives the curvature of $\Gamma$. Now, the properties  $FJ=h$ and $Fh=-J$ provide the so-called  almost complex structure $F$ $(F^2=-I)$ corresponding to $\Gamma$. For any $z\in TM$, this $F$ provides an isomorphism of $T_zTM$.

\begin{defn}\label{Finsler} \emph{\cite{r27}} A Finsler  manifold (space)
 of dimension $n$ is the pair $(M,E)$, where $M$ is an $n$-dimensional
 smooth manifold,  and $E$ is the function (called the energy function)
 $$E: TM \To \Real   $$
 with     the conditions:
 \begin{description}
    \item[ {(a)} ] $E$ is positive, i.e., $E(z)>0 $ for all $z\in \T M$ and $E(0)=0$, $\T M=TM\backslash \{0\}$,
    \item[ {(b) }] $E$ is smooth  on the slit tangent bundle   $\T M$, and $C^{1}$ on $TM$,
    \item[ {(c)} ] $E$ is positively homogeneous of degree $2$ in the directional variable, i.e.,
    $\mathcal{L}_{{C}} E=2E$,
    \item[ {(d) }] The   fundamental  $2$-form
    $\Omega:=dd_{J}E$       has maximal rank.
     \end{description}
 \end{defn}
 As shown in \cite{r27}, we have the following result:
\begin{thm}{\em{\cite{r27}}}\label{spray} Consider a Finsler space $(M,E)$. The Euler-Lagrange equation $i_{S}\Omega = -dE$ determines  a unique vector field $S\in \cppp$   which is a spray.  Furthermore, this spray is known as the canonical spray (or the geodesic spray)  of the Finsler manifold $(M,E)$.
 \end{thm}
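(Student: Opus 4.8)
The plan is to break the statement into three parts and treat them in order: existence and uniqueness of the solution $S$ of $i_{S}\Omega=-dE$, the semispray property $JS=C$, and the homogeneity $[C,S]=S$ that promotes $S$ to a spray. For existence and uniqueness the only ingredient is condition (d): since $\Omega=dd_{J}E$ has maximal rank it is a nondegenerate $2$-form on $\T M$, so the assignment $X\mapsto i_{X}\Omega$ is a fiberwise isomorphism from $\cppp$ onto the scalar $1$-forms on $\T M$. Applying its inverse to $-dE$ yields a unique $S\in\cppp$ whose regularity ($C^{\infty}$ on $\T M$, $C^{1}$ on $TM$) is inherited from that of $E$ and $\Omega$.

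The heart of the argument is the semispray property, which I would derive from two auxiliary identities supplied by the Fr\"{o}licher--Nijenhuis calculus. First, $i_{J}\Omega=0$: writing $d_{J}E=i_{J}\,dE$ (since $i_{J}$ annihilates functions) and using $d_{J}=i_{J}d-d\,i_{J}$, one gets $i_{J}\Omega=i_{J}d\,d_{J}E=d_{J}^{2}E+d(i_{J}d_{J}E)$, where $d_{J}^{2}E=0$ because $[J,J]=0$ and $i_{J}d_{J}E=i_{J}i_{J}\,dE=0$ because $J^{2}=0$; for a $2$-form this reads $\Omega(JX,Y)+\Omega(X,JY)=0$. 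Second, $i_{C}\Omega=d_{J}E$: by Cartan's formula $i_{C}\Omega=\mathcal{L}_{C}d_{J}E-d\,i_{C}d_{J}E$, in which $i_{C}d_{J}E=dE(JC)=0$ since $C\in V(TM)=\ker J$, while $\mathcal{L}_{C}d_{J}E=d_{J}E$ follows from $[C,J]=-J$ together with the homogeneity $\mathcal{L}_{C}E=2E$ of condition (c). Contracting the defining equation $i_{S}\Omega=-dE$ with $J$ then gives $\Omega(S,JX)=-dE(JX)$; combined with the first identity this becomes $\Omega(JS,X)=dE(JX)=\Omega(C,X)$ for every $X$, so $\Omega(JS-C,X)=0$ and nondegeneracy of $\Omega$ forces $JS=C$, i.e. $S$ is a semispray.

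Finally, to see that $S$ is homogeneous of degree $2$ I would differentiate the defining equation along $C$. From $\mathcal{L}_{C}\Omega=d\,\mathcal{L}_{C}d_{J}E=d\,d_{J}E=\Omega$ and $\mathcal{L}_{C}(-dE)=-2\,dE$, the commutation rule $\mathcal{L}_{C}i_{S}=i_{S}\mathcal{L}_{C}+i_{[C,S]}$ applied to $\Omega$ gives $i_{[C,S]}\Omega+i_{S}\Omega=-2\,dE$; substituting $i_{S}\Omega=-dE$ leaves $i_{[C,S]}\Omega=-dE=i_{S}\Omega$, whence $[C,S]=S$ again by nondegeneracy. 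Thus $S$ is a homogeneous semispray, that is, the canonical (geodesic) spray. I expect the main obstacle to lie precisely in the two auxiliary identities $i_{J}\Omega=0$ and $i_{C}\Omega=d_{J}E$: they require handling interior products of the vector $1$-form $J$ with scalar forms correctly and invoking $[C,J]=-J$, $J^{2}=0$, and $[J,J]=0$ in the right order. Once these are in place, both the semispray and the spray conclusions drop out purely from the nondegeneracy of $\Omega$.
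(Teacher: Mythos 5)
Your proof is correct: the existence--uniqueness step from the maximal rank of $\Omega$, the two auxiliary identities $i_{J}\Omega=0$ and $i_{C}\Omega=d_{J}E$ (each derived with the right Fr\"{o}licher--Nijenhuis commutation rules, $[J,J]=0$, $J^{2}=0$, $[C,J]=-J$, $\mathcal{L}_{C}E=2E$), the nondegeneracy argument giving $JS=C$, and the computation $\mathcal{L}_{C}\Omega=\Omega$, $\mathcal{L}_{C}(-dE)=-2\,dE$ forcing $[C,S]=S$ are all sound. The paper states this theorem as a cited result from \cite{r27} without reproducing a proof, and your argument is essentially the standard one from that source and from Grifone's papers, so it matches the intended proof rather than departing from it.
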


 Building upon the work of \cite{r27}, we provide an essential result concerning the existence and uniqueness of a particular nonlinear connection endowed with remarkable properties.
\begin{thm}{\em{\cite{r27}}}\label{Barthel} There is a  unique conservative and  homogeneous nonlinear connection on a Finsler manifold $(M,E)$, that is,$d_hE=0$. Moreover, this connection has zero torsion and   given by  $$\Gamma = [J,S] . $$
Where $S$ is the   canonical spray of $E$.
\end{thm}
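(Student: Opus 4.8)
The plan is to take $\Gamma:=[J,S]$ with $S$ the canonical spray of Theorem \ref{spray}, check that it is a homogeneous, torsion-free nonlinear connection which is conservative, and then show that any connection with these properties must coincide with it. First I would verify the defining relations $J\Gamma=J$ and $\Gamma J=-J$. Writing $\Gamma\xi=[J,S]\xi=J[S,\xi]-[S,J\xi]$ and using $J^2=0$ reduces both identities to the single relation $J[S,J\xi]=-J\xi$. This follows from the vanishing Nijenhuis torsion $N_J=\tfrac12[J,J]=0$ of \eqref{JJ}: specializing \eqref{Nk} at $\zeta=S$ and using $JS=C$ gives $[C,J\xi]-J[C,\xi]-J[S,J\xi]=0$, where the first two terms combine to $([C,J])\xi=-J\xi$. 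Homogeneity and torsion-freeness then drop out of the graded Jacobi identity for the Fr\"olicher--Nijenhuis bracket: $[C,\Gamma]=[[C,J],S]+[J,[C,S]]=-[J,S]+[J,S]=0$ by $[C,J]=-J$ and $[C,S]=S$, while $t=\tfrac12[J,\Gamma]=\tfrac12[J,[J,S]]=\tfrac14[[J,J],S]=0$ since $[J,J]=0$. Regularity (smooth on $\T M$, $C^{0}$ on $TM$) is inherited from $S$.

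The crux is conservativeness $d_hE=0$. Put $\theta:=d_JE$, so that $\Omega=d\theta$. Since $JS=C$ one has $i_S\theta=dE(C)=\mathcal{L}_CE=2E$, and Cartan's formula together with the Euler--Lagrange equation $i_S\Omega=-dE$ yields $\mathcal{L}_S\theta=i_S\Omega+d(i_S\theta)=-dE+2\,dE=dE$. Consequently $\mathcal{L}_S\Omega=d(\mathcal{L}_S\theta)=0$, and therefore $\mathcal{L}_S\,dE=-\mathcal{L}_S\,i_S\Omega=-i_S\mathcal{L}_S\Omega=0$, i.e.\ $d(SE)=0$; in particular $d_J(SE)=0$. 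Using $d_{[J,S]}=[d_J,\mathcal{L}_S]$, evaluation on $E$ gives $d_\Gamma E=d_J(SE)-\mathcal{L}_S\theta=-dE$, so that $d_hE=\tfrac12(dE+d_\Gamma E)=0$. This computation, powered by the identity $\mathcal{L}_S\Omega=0$ extracted from the Euler--Lagrange equation, is the genuine obstacle; everything else is formal bracket algebra.

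For uniqueness, let $\Gamma'$ be conservative, homogeneous and torsion-free, with associated spray $S'$ (a spray, by homogeneity). I would first record a structural fact: a homogeneous, torsion-free connection is $[J,\cdot]$ of its own spray. Indeed $[J,S']$ is again a homogeneous torsion-free connection whose spray is $S'$ (from $[J,S']S'=J[S',S']-[S',JS']=[C,S']=S'$), so the difference $D:=\Gamma'-[J,S']$ is semi-basic, satisfies $[J,D]=0$ and $[C,D]=0$, and kills $S'$; differentiating $DS'=0$ and using the symmetry encoded by $[J,D]=0$ together with Euler's relation forces $D=0$, whence $\Gamma'=[J,S']$. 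Now set $W:=S'-S$, which is vertical ($JW=C-C=0$) with $[C,W]=W$, and $\phi:=WE$. Running the computation of the previous paragraph with $S'$ in place of $S$ but keeping the conservativeness as the only input gives $i_{S'}\Omega=-dE+d_J(S'E)$; subtracting $i_S\Omega=-dE$ and using $d(SE)=0$ yields $i_W\Omega=d_J\phi$.

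Contracting this with $S$ closes the argument. On one side $(i_W\Omega)(S)=\Omega(W,S)=dE(W)=\phi$, while on the other $(d_J\phi)(S)=d\phi(JS)=\mathcal{L}_C\phi=3\phi$, since $\phi$ is homogeneous of degree $3$ (from $[C,W]=W$ and $\mathcal{L}_CE=2E$). Hence $\phi=3\phi$, so $\phi=0$, giving $i_W\Omega=d_J\phi=0$; the maximal rank of $\Omega$ (condition (d) in Definition \ref{Finsler}) then forces $W=0$, i.e.\ $S'=S$ and $\Gamma'=[J,S]=\Gamma$. I expect the two delicate points to be the extraction of $\mathcal{L}_S\Omega=0$ for conservativeness and, in the uniqueness, the homogeneity-degree contraction that collapses $\phi$ to zero and lets non-degeneracy of $\Omega$ finish.
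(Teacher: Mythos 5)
Your proposal cannot be compared line-by-line with the paper, because the paper states this theorem without proof, importing it from the Klein--Grifone literature (\cite{r27}, and really Grifone's canonical-connection theorem of \cite{r21,r22}). Measured against that classical proof, your argument is essentially a faithful reconstruction of it, and every step you sketch does close. The reduction of $J\Gamma=J$ and $\Gamma J=-J$ to the single identity $J[S,J\xi]=-J\xi$, obtained from \eqref{Nk} at $\zeta=S$ together with $[C,J]=-J$, is correct; so are the graded-Jacobi computations $[C,[J,S]]=[[C,J],S]+[J,[C,S]]=0$ and $t=\tfrac14[[J,J],S]=0$. For conservativeness, with the paper's convention $d_K=[i_K,d]$ one indeed has $d_S=\mathcal{L}_S$ and $d_{[J,S]}=d_J\mathcal{L}_S-\mathcal{L}_S d_J$, so your chain $\mathcal{L}_S\theta=dE$, $\mathcal{L}_S\Omega=0$, $d(SE)=0$, $d_\Gamma E=-dE$, $d_hE=0$ is sound. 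In the uniqueness half, your structural lemma is Grifone's strong-torsion lemma, and the mechanism you sketch does work: $D:=\Gamma'-[J,S']$ satisfies $JD=DJ=0$, $[C,D]=0$, $[J,D]=0$ and $DS'=0$, and evaluating $[J,D](\cdot\,,S')=0$ (in coordinates: differentiate $D^i_jy^j=0$, use the symmetry $\dot{\partial}_kD^i_j=\dot{\partial}_jD^i_k$ and Euler's relation) gives exactly $2D=0$. The final contraction is also right: $[C,W]=W$ makes $\phi:=WE$ homogeneous of degree $3$, conservativeness of $\Gamma'$ gives $i_W\Omega=d_J\phi$, contraction with $S$ yields $\phi=3\phi$ (using $\Omega(W,S)=dE(W)=\phi$), hence $\phi=0$, $i_W\Omega=0$, and the maximal rank of $\Omega$ in Definition \ref{Finsler} forces $W=0$.

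The one point you should flag explicitly is a mismatch between what you prove and what the theorem literally asserts. You establish uniqueness within the class of conservative, homogeneous, \emph{torsion-free} connections, whereas the statement claims uniqueness among conservative homogeneous connections, demoting zero torsion to a consequence. The literal claim is false: locally, conservativeness imposes only the $n$ linear conditions $N^i_j\,\dot{\partial}_iE=\partial_jE$ on the $n^2$ coefficients $N^i_j$, and adding to the Barthel coefficients any nonzero semi-basic vector $1$-form $D$ that is homogeneous of degree $1$ and satisfies $dE\circ D=0$ (such $D$ exist already for the Euclidean energy on $\Real^2$) produces a second conservative homogeneous connection, necessarily with nonvanishing torsion. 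So the torsion-free hypothesis you inserted in the uniqueness part is not a gap in your argument but a necessary repair of the statement: what you proved is precisely Grifone's original theorem, which characterizes the Barthel connection as the unique conservative homogeneous connection with vanishing torsion.
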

The connection we have discussed, which is also known as the Cartan nonlinear connection, the canonical connection, or the Barthel connection, is fundamental to the Finsler manifold $(M,E)$. It's worth noting that the canonical spray is a specific type of spray associated with the Barthel connection, known as a semi-spray

%%%%%%%%%%%%%%%%%%%%%%%%%%%%%%%%%%%%%%%%%%%%%%%%%%%%%%%%%%%%%%%%%%%%%%%%%&&&&&&&&&&&&&&&&&&&&&&&&&&&&&&&&&&&&&&&&&&&&&&&&&&&&&&&&&&&&&&&&
\section{Berwald, Cartan, and Chern connections}

~\par
We present essential background information on the relationship between Berwald and Cartan connections, which is pertinent to the current investigation. For a more comprehensive treatment, the reader is referred to \cite{r22} and \cite{Nabil.1}.

\begin{thm}\label{Th:Berwald_connec.} \cite{r22} On  a Finsler manifold   $(M,E)$,
we have a unique linear connection  \, $\overcirc{D}$ on $TM$  that satisfies the   facts:
\begin{description}
                  \item[(a)]$\overcirc{D}J=0$.\hspace{5.4cm}\em{\textbf{(b)}}\, $\overcirc{D}C=v$.
                  \item[(c)] $\overcirc{D}\Gamma=0\,\,
                  (\Longleftrightarrow\overcirc{D}h=\overcirc{D}v=0  )$. \hspace{1.7cm}{\textbf{(d)}}\, $\overcirc{D}_{J\zeta }J\xi =J[J\zeta ,\xi ]$.
                  \item[(e)]$\overcirc{T}(J\zeta ,\xi )=0$,
 \end{description}
 where the Barthel connection's horizontal and vertical projectors are denoted by $h$ and $v$. The (classical) torsion of \, $\overcirc{D}$ is \, $\overcirc{T}$, and $\Gamma=[J,S]$. We refer to this connection as the Berwald connection.

\end{thm}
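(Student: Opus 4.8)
The plan is to prove uniqueness by showing that conditions (a)--(e) force an explicit formula for $\overcirc{D}_{\zeta}\xi$, and then to prove existence by adopting that formula as a definition and verifying that it is a genuine linear connection satisfying (a)--(e). The whole argument rests on the splitting $TTM=H(TM)\oplus V(TM)$ together with the two mutually inverse bundle isomorphisms $J\colon H(TM)\to V(TM)$ and $F|_{V(TM)}\colon V(TM)\to H(TM)$ (recall $FJ=h$, $Fh=-J$, $J^{2}=0$, $N_{J}=\tfrac12[J,J]=0$, and $J\Gamma=J$). Condition (c) says $\overcirc{D}$ preserves the horizontal and vertical subbundles, so it suffices to determine $\overcirc{D}_{\zeta}$ on horizontal fields; its action on a vertical field $J\eta=J(h\eta)$ is then fixed by (a), namely $\overcirc{D}_{\zeta}(J\eta)=J\,\overcirc{D}_{\zeta}(h\eta)$.

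For uniqueness I would split the direction $\zeta$ into $h\zeta$ and $v\zeta$ and treat four blocks. First, in the vertical direction: applying (a) to $\overcirc{D}_{v\zeta}(J\eta)$ and invoking (d) (with $J\zeta:=v\zeta$) gives $J\,\overcirc{D}_{v\zeta}(h\eta)=J[v\zeta,h\eta]$, whence, since $J$ is injective on $H(TM)$, $\overcirc{D}_{v\zeta}(h\eta)=h[v\zeta,h\eta]$; the block $\overcirc{D}_{v\zeta}(v\eta)=J[v\zeta,Fv\eta]$ follows directly from (d). Second, in the horizontal direction I would feed the torsion condition (e) the pair $(J\eta,h\zeta)$: since $\overcirc{T}(J\eta,h\zeta)=\overcirc{D}_{J\eta}(h\zeta)-\overcirc{D}_{h\zeta}(J\eta)-[J\eta,h\zeta]=0$ and $\overcirc{D}_{J\eta}(h\zeta)=h[J\eta,h\zeta]$ by the previous step, one obtains $\overcirc{D}_{h\zeta}(J\eta)=h[J\eta,h\zeta]-[J\eta,h\zeta]=v[h\zeta,J\eta]$. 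Applying (a) then yields $J\,\overcirc{D}_{h\zeta}(h\eta)=v[h\zeta,J\eta]$, so $\overcirc{D}_{h\zeta}(h\eta)$ is the unique horizontal field with this $J$-image, i.e. $\overcirc{D}_{h\zeta}(h\eta)=F\,v[h\zeta,J\eta]$. The remaining block $\overcirc{D}_{h\zeta}(v\eta)=v[h\zeta,v\eta]$ is then fixed by (a). These four blocks determine $\overcirc{D}$ completely, proving uniqueness; note that (b) was not needed here and will instead be verified as a consequence.

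For existence I would take the four expressions derived above as the definition of $\overcirc{D}_{\zeta}\xi$ (extended via $\xi=h\xi+v\xi$ and $\Real$-bilinearity). The substantive checks are: (i) well-definedness, i.e. that $h[v\zeta,h\eta]$ and $v[h\zeta,J\eta]$ depend only on the fields $h\eta,h\zeta$ and not on chosen representatives --- this uses $N_{J}=0$ (which makes $V(TM)$ integrable and kills the ambiguous ``$J$-of-vertical'' terms), exactly as in the consistency check of (d); (ii) that $\overcirc{D}$ is a linear connection, namely additivity and the Leibniz rule $\overcirc{D}_{\zeta}(f\xi)=(\zeta f)\xi+f\,\overcirc{D}_{\zeta}\xi$ in the second slot, together with the tensoriality $\overcirc{D}_{f\zeta}\xi=f\,\overcirc{D}_{\zeta}\xi$ in the first. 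Finally one reads (a), (c), (d), (e) off the formulas, and verifies (b) separately: for a vertical direction $\overcirc{D}_{v\zeta}C=J[v\zeta,S]=v\zeta$ because $J[J\eta,S]=J\eta$ (this is precisely $J\Gamma=J$), while for a horizontal direction $\overcirc{D}_{h\zeta}C=0$ because homogeneity $[C,\Gamma]=0$ forces $[h\zeta,C]$ to be horizontal, so $v[h\zeta,JS]=0$.

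I expect the main obstacle to be step (ii) of the existence part: the defining formulas are built from Lie brackets, which are not tensorial, so to confirm that $\overcirc{D}$ is an honest connection one must show that the non-tensorial correction terms (the $(\zeta f)$ and $-(\xi f)$ terms produced by $[f\zeta,\xi]$ and $[\zeta,f\xi]$) reassemble or cancel correctly once the projectors $h,v$ and the maps $J,F$ are applied. The identities $[C,\Gamma]=0$, $J\Gamma=J$, and $N_{J}=0$ are exactly what make this cancellation work, so once well-definedness is settled the remaining verifications are systematic, if lengthy, bookkeeping.
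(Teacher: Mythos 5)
Your proposal is correct and takes essentially the route the paper relies on: the paper states this theorem without proof (citing Grifone \cite{r22}), but your four-block derivation recovers exactly the explicit formulae $\overcirc{D}_{J\zeta }J\xi =J[J\zeta ,\xi ]$, $\overcirc{D}_{h\zeta }J\xi =v[h\zeta ,J\xi ]$, $\overcirc{D}F=0$ recorded in (\ref{berwaldconn.}), and your uniqueness-then-existence scheme is the same template the authors execute in detail for the Hashiguchi connection (Theorem \ref{hashiconnc.}). Your further observation that axiom \textbf{(b)} is redundant, following from $J[J\eta ,S]=J\eta $ and the horizontality of $[h\zeta ,C]$ under homogeneity of $\Gamma$, is sound and is precisely what the paper's Lemmas \ref{jx} and \ref{chx} provide.
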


The explicit formulae   of Berwald connection \,  $\overcirc{D}$ are given as follows:
\begin{equation}\label{berwaldconn.}
  \left.
    \begin{array}{rcl}
  \overcirc{D}_{J\zeta }J\xi &=&J[J\zeta ,\xi ],\\
\overcirc{D}_{h\zeta }J\xi &=&v[h\zeta ,J\xi ],\\
  \overcirc{D}F&=&0,
 \end{array}
  \right\}
\end{equation}
where $F$ is the corresponding  almost complex structure to  the Barthel connection $\Gamma$.
\begin{lem}
For the  Berwald connection, we have   the property
$$\overcirc{T}(h\zeta ,h\xi )=\mathfrak{R}(\zeta ,\xi ),$$
where $\mathfrak{R}$ is the curvature of the Barthel connection.
\end{lem}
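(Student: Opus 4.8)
The plan is to expand the classical torsion $\overcirc{T}(h\zeta,h\xi)=\overcirc{D}_{h\zeta}(h\xi)-\overcirc{D}_{h\xi}(h\zeta)-[h\zeta,h\xi]$ and to separate it into its horizontal and vertical parts via $I=h+v$. By Theorem~\ref{Th:Berwald_connec.}(c) we have $\overcirc{D}h=0$, so that $\overcirc{D}_{h\zeta}(h\xi)=h\,\overcirc{D}_{h\zeta}(h\xi)$ is horizontal, and likewise for $\overcirc{D}_{h\xi}(h\zeta)$. Writing $[h\zeta,h\xi]=h[h\zeta,h\xi]+v[h\zeta,h\xi]$, the vertical part of the torsion is then exactly $-v[h\zeta,h\xi]$, while its horizontal part is $\overcirc{D}_{h\zeta}(h\xi)-\overcirc{D}_{h\xi}(h\zeta)-h[h\zeta,h\xi]$. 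Thus the statement splits into two claims: that the vertical part equals $\mathfrak{R}(\zeta,\xi)$, and that the horizontal part vanishes.

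For the vertical part I would identify $-v[h\zeta,h\xi]$ with $\mathfrak{R}(\zeta,\xi)$. Starting from $\mathfrak{R}=-\tfrac12[h,h]=-N_h$ and expanding $N_h$ through \eqref{Nk} with $K=h$ and $h^2=h$, a short rearrangement using $v=I-h$ gives $N_h(\zeta,\xi)=v[h\zeta,h\xi]+h[v\zeta,v\xi]$. Since the vertical bundle $V(TM)=\operatorname{Im}(v)$ is integrable (it is the foliation by fibres of $TM\to M$), the bracket $[v\zeta,v\xi]$ is again vertical and hence annihilated by $h$; therefore $N_h(\zeta,\xi)=v[h\zeta,h\xi]$ and $\mathfrak{R}(\zeta,\xi)=-v[h\zeta,h\xi]$, as required.

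The main step, and where the real content lies, is the vanishing of the horizontal part, i.e. $\overcirc{D}_{h\zeta}(h\xi)-\overcirc{D}_{h\xi}(h\zeta)=h[h\zeta,h\xi]$. Both sides are horizontal and $J$ is injective on $H(TM)$, so I would apply $J$. From $\overcirc{D}F=0$ and the second line of \eqref{berwaldconn.}, $\overcirc{D}_{h\zeta}(h\xi)=F\,\overcirc{D}_{h\zeta}(J\xi)=F\,v[h\zeta,J\xi]$; the relations $FJ=h$, $Fh=-J$ from \eqref{JJ} yield $JF=v$, so that $J\,\overcirc{D}_{h\zeta}(h\xi)=v[h\zeta,J\xi]$. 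Using $Jh=J$ on the right-hand side, the claim reduces to the bracket identity $v[h\zeta,J\xi]-v[h\xi,J\zeta]=J[h\zeta,h\xi]$. This is precisely where the torsion-freeness of the Barthel connection enters: by Theorem~\ref{Barthel} the torsion $t=\tfrac12[J,\Gamma]$ vanishes, which in the Fr\"olicher--Nijenhuis language is the identity $[J,h]=0$. Evaluating $[J,h]=0$ on the horizontal pair $(h\zeta,h\xi)$ and simplifying with $Jh=J$, $hJ=0$, $h^2=h$ gives $v[J\zeta,h\xi]+v[h\zeta,J\xi]=J[h\zeta,h\xi]$, and the antisymmetry $[J\zeta,h\xi]=-[h\xi,J\zeta]$ converts the left-hand side into $v[h\zeta,J\xi]-v[h\xi,J\zeta]$, which is exactly the needed identity. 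Combining the two parts yields $\overcirc{T}(h\zeta,h\xi)=-v[h\zeta,h\xi]=\mathfrak{R}(\zeta,\xi)$. The hard part will be the bracket bookkeeping in this last step; everything hinges on the Barthel torsion being zero, and the almost complex structure $F$ is the device that converts the vertical Berwald formula into horizontal covariant-derivative information.
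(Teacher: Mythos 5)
Your proof is correct, and it reaches the result by a somewhat different route than the paper. The paper states this lemma without proof, but the identical computation appears verbatim in its proof of the Hashiguchi (h)h-torsion (which applies to $\overcirc{D}$ unchanged, since $\overdiamond{D}_{h\zeta}J\xi=\overcirc{D}_{h\zeta}J\xi$): there the argument is a single direct chain, writing $\overcirc{D}_{h\zeta}h\xi=Fv[h\zeta,J\xi]$ via $\overcirc{D}F=0$, substituting the Barthel torsion identity $0=t(\zeta,\xi)=v[J\zeta,h\xi]+v[h\zeta,J\xi]-J[h\zeta,h\xi]$, and collapsing the $F$-terms with $FJ=h$ and $hF=Fv$ to get $h[h\zeta,h\xi]-[h\zeta,h\xi]=-v[h\zeta,h\xi]=\mathfrak{R}(\zeta,\xi)$. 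You instead split $\overcirc{T}(h\zeta,h\xi)$ into horizontal and vertical parts using $\overcirc{D}h=0$, identify the vertical part with the curvature, and kill the horizontal part by applying $J$ (injective on $H(TM)$ since $\ker J=V(TM)$) and invoking $[J,h]=0$; this last identity is indeed equivalent to $t=0$, because $[J,I]=0$ gives $\tfrac12[J,\Gamma]=[J,h]$, and your evaluation of $[J,h]$ on $(h\zeta,h\xi)$ reproduces exactly the identity the paper quotes. The essential inputs are therefore the same in both arguments --- vanishing Barthel torsion plus the $F$-transfer of the vertical formula $\overcirc{D}_{h\zeta}J\xi=v[h\zeta,J\xi]$ --- but your organization buys two things: it isolates precisely which part of the torsion carries the curvature, and it \emph{proves} rather than cites the identity $\mathfrak{R}(\zeta,\xi)=-v[h\zeta,h\xi]$ (your Nijenhuis computation $N_h(\zeta,\xi)=v[h\zeta,h\xi]+h[v\zeta,v\xi]$, with $h[v\zeta,v\xi]=0$ by integrability of the vertical bundle, is correct; the paper defers this fact to the literature, citing \cite{Nabil.2}). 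The paper's chain is shorter; yours is more self-contained. One cosmetic slip: the relations $FJ=h$, $Fh=-J$ come from the definition of the almost complex structure in the preliminaries, not from \eqref{JJ}, which lists the properties of $J$ alone --- the derived identity $JF=v$ itself is fine and is also used in the paper's Lemma \ref{df=0}.
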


 Let $(M,E)$  be a  Finsler manifold   equipped  with the fundamental form  $\Omega =dd_{J}E$. Then, the map $\overline{g}$ given by
$$\overline{g}(J \zeta ,J \xi ):=\Omega(J\zeta ,\xi ), \ \forall \ \zeta , \xi  \in  T(TM)$$
presents a  metric  on $V(TM)$. Moreover, the metric  $\overline{g}$   can be extended to a metric $g$ on $T(TM)$   by the formula:

 \begin{equation}\label{metricg}
 g(\zeta ,\xi )=\overline{g}(J\zeta ,J\xi )+\overline{g}(v\zeta ,v\xi )=\Omega(\zeta ,F\xi ).
\end{equation}

According to \cite{r22}, we have the following theorem which characterizes  the Cartan  connection on a Finsler manifold $(M,E)$.
\begin{thm}\label{Th:Cartan_connec.}\cite{r22} Assume that   $(M,E)$ is a Finsler manifold. Then, we have a unique  linear connection ${D}$ on $TM$ such that  the following properties are satisfied:
\begin{description}
                  \item[(a)]${D}J=0$.\hspace{4.4cm} \em{\textbf{(b)}} ${D}C=v$.
                  \item[(c)] ${D}\Gamma=0 \,\,(\Longleftrightarrow {D}h={D}v=0 )$.\hspace{.8cm}\textbf{(d)} ${D}g=0$.
                  \item[(e)] ${T}(J\zeta ,J\eta )=0$.\hspace{3.25cm}\textbf{(f)} $JT(h\zeta ,h\xi )=0$.
 \end{description}
 \end{thm}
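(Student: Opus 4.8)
The plan is to adapt the classical Levi-Civita argument to the two compatible structures $J$ and $g$ on $TTM$, working throughout along the splitting $TTM=V(TM)\oplus H(TM)$. Uniqueness and existence are treated in parallel, since the linear algebra of one is the inverse of the other.

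\textbf{Uniqueness.} Suppose $D$ and $\widetilde D$ both satisfy (a)--(f), and set $B(\zeta,\xi):=D_\zeta\xi-\widetilde D_\zeta\xi$, which is $C^\infty(TM)$-bilinear, hence a tensor. From (a) and (c), $B$ commutes with $J$, $h$, and $v$ in its second argument, so it respects the horizontal/vertical splitting; from (d), $B(\zeta,\cdot)$ is skew with respect to $g$. Since the bracket cancels in the difference, the torsion difference is the antisymmetrization $B(\zeta,\xi)-B(\xi,\zeta)$: thus (e) makes $B$ symmetric on vertical--vertical arguments, while (f), after applying $J$ and using $DJ=0$, makes the tensor $P(\zeta,\xi):=B(h\zeta,J\xi)$ symmetric. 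The structural constraints reduce $B$ entirely to its vertical-valued part, split into the $v$- and $h$-directions, and each piece dies by a Levi-Civita-type cyclic cancellation: for the $v$-direction one combines $\overline{g}$-skewness (from (d)) with the vertical symmetry (from (e)); for the $h$-direction one combines $\overline{g}$-skewness with the symmetry of $P$ (from (f)). In each case summing the three cyclic permutations shows the relevant quantity equals its own negative, so $B\equiv0$.

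\textbf{Existence.} For existence I would exhibit $D$ explicitly and verify the axioms. Because $Dh=Dv=0$ and $DJ=0$ with $J$ an isomorphism $H(TM)\to V(TM)$, the connection is recovered from its action on vertical fields via $D_\zeta(h\xi)=F\,D_\zeta(J\xi)$ (using $FJ=h$), so it is enough to prescribe the $h$-covariant derivative $D_{h\zeta}(J\xi)$ and the $v$-covariant derivative $D_{v\zeta}(J\xi)$. I would fix these by two Koszul-type formulas built from $\overline{g}$, the Lie brackets, and the projectors, designed so that the cyclic identities inverted above hold by construction: the $v$-direction formula is the standard metric Koszul formula for $\overline{g}$, whose built-in symmetry realizes (e), while the $h$-direction formula is determined by metric compatibility together with the symmetry (f). An equivalent and cleaner route, given what is already available, starts from the Berwald connection $\overcirc{D}$ of Theorem \ref{Th:Berwald_connec.} --- which one checks already satisfies (a), (b), (c), (e), (f) --- and corrects it, $D=\overcirc{D}+A$, by the unique tensor $A$ that commutes with $J,h,v$ and annihilates $C$, whose metric-determined part cancels the non-metricity $(\overcirc{D} g)$ and whose complementary part is fixed by leaving the torsion normalizations (e) and (f) intact.

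\textbf{Verification and the main obstacle.} With $D$ in hand I would check that it is a genuine linear connection (the Leibniz rule coming from the first-order bracket terms, the rest being tensorial), then read off (a) and (c) from the $J$-, $h$-, $v$-splitting of the formula, (d) by construction, and (e), (f) from the prescribed symmetries. Property (b), $DC=v$, is the sole external normalization and must be checked directly on the Liouville field $C=JS$, using its verticality, the homogeneity of $\Gamma$, and $[C,J]=-J$. The delicate point throughout is the asymmetry between (e) and (f): (e) is a full vanishing of the vertical--vertical torsion, whereas (f) kills only the $J$-image of the horizontal--horizontal torsion, whose horizontal part survives and equals the Barthel curvature $\mathfrak{R}$. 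I therefore expect the real work to lie in choosing the single $h$-direction formula so that it is at once metric, compatible with $DC=v$, and consistent across all four argument types, while deliberately preserving rather than annihilating that curvature-valued piece of the torsion.
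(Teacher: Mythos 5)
Your proposal is correct and takes essentially the approach the paper embodies: the theorem itself is quoted from Grifone \cite{r22} without a reproduced proof, but the paper's displayed formulae $D_{J\zeta }J\xi =\,\overcirc{D}_{J\zeta }J\xi +\mathcal{C}(\zeta ,\xi )$, $D_{h\zeta }J\xi =\,\overcirc{D}_{h\zeta }J\xi +\mathcal{C}'(\zeta ,\xi )$, ${D}F=0$ are precisely your Berwald-correction existence scheme, with (\ref{c(s)}) supplying your requirement that the correction tensor annihilate $C$. Your difference-tensor uniqueness is the same Koszul inversion the paper carries out in detail for the analogous Hashiguchi theorem (vertical Koszul formula from (d) and (e), horizontal part pinned down by metricity together with (f), $DF=0$ from reducibility via Lemma \ref{df=0}, and the action on horizontal vectors recovered by $D_\zeta h\xi =F D_\zeta J\xi $), and the verifications you defer --- the Leibniz rule, axiom (b), and the complete symmetry of $\mathcal{C}'_b$ needed to invert the horizontal formula --- are exactly the routine checks performed there.
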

The   connection ${D}$ mentioned above is referred as the Cartan  connection.  Moreover, the explicit formulae  of\,  ${D}$ are given by:
\begin{equation}\label{cartanconn.}
  \left.
    \begin{array}{rcl}
  D_{J\zeta }J\xi &=&\overcirc{D}_{J\zeta }J\xi +\mathcal{C}(\zeta ,\xi ),\\
  D_{h\zeta }J\xi &=&\overcirc{D}_{h\zeta }J\xi +\mathcal{C}'(\zeta ,\xi ),\\
  {D}F&=&0,
\end{array}
  \right\}
\end{equation}
where $\mathcal{C}$ and $\mathcal{C}'$ are   scalar 2-forms on $TM$ defined  by the formulae
$$\Omega(\mathcal{C}(\zeta ,\eta ),\xi )=\frac{1}{2}(\mathcal{L}_{J\zeta }(J^\ast g))(\eta ,\xi ),\quad\quad
\Omega(\mathcal{C}'(\zeta ,\eta ),\xi )=\frac{1}{2}(\mathcal{L}_{h\zeta }g)(J\eta ,J\xi ), $$
where we use  $(J^\ast g)(\eta ,\xi )=g(J\eta ,J\xi )$.
It should be noted that  ${\mathcal{C}}$ and $\mathcal{C}'$ are   the first and the second Cartan tensors respectively. Moreover,  ${\mathcal{C}}$ and $\mathcal{C}'$     are semi-basics,
 symmetric,  and
  \begin{equation}\label{c(s)}
  {\mathcal{C}}(\eta ,S)=\mathcal{C}'(\eta ,S)=0.
  \end{equation}
Recently, in \cite{Chern}, the Chern connection was studied and characterized by the following theorem.
\begin{thm}\label{Th:Chern_connec.}\cite{Chern} Suppose $(M,E)$ is a Finsler manifold. Then, we have    a unique linear connection, denoted by \, $\overast{D}$,  on $TM$ such  that  the following facts are attained:
\begin{description}
                  \item[(a)]$\overast{D}J=0$.\hspace{4.4cm} \em{\textbf{(b)}} $\overast{D}C=v$.
                  \item[(c)] $\overast{D}\Gamma=0 \,\,(\Longleftrightarrow \overast{D}h=\overast{D}v=0 )$.\hspace{1cm}\textbf{(d)} $\overast{D}_{h\zeta }g=0$.
                  \item[(e)] $\overast{T}(J\zeta ,J\xi )=0$.\hspace{3.25cm}\textbf{(f)} $J\ \overast{T}(h\zeta ,h\xi )=0$.
 \end{description}
 \end{thm}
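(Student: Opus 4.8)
The plan is to establish uniqueness and existence in parallel, by first reducing $\overast{D}$ to two partial operators and then pinning each one down from the stated axioms.

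I would begin the uniqueness part by noting that (a) and (c) force $\overast{D}F=0$: since $F$ is assembled from the parallel fields $J$ and $h$ through $FJ=h$ and $Fh=-J$, while $\overast{D}J=0=\overast{D}h$, the Leibniz rule gives $\overast{D}_X(F\xi)=F(\overast{D}_X\xi)$ first on vertical and then on horizontal vectors, hence on all of $\mathfrak{X}(\T M)$. Because every field splits as $\xi=h\xi+v\xi$ with $v\xi\in V(TM)=\mathrm{Im}(J)$ and $h\xi=FJ\xi$, the two identities $\overast{D}J=0$ and $\overast{D}F=0$ recover $\overast{D}$ completely from the vertical operator $\overast{D}_{J\zeta}J\xi$ and the horizontal operator $\overast{D}_{h\zeta}J\xi$. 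Thus it suffices to fix these two objects.

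For the vertical operator I would expand $\overast{T}(J\zeta,J\xi)=0$ from (e) and feed in the Nijenhuis identity $[J,J]=0$ of \eqref{JJ}, which in the form $[J\zeta,J\xi]=J[J\zeta,\xi]-J[J\xi,\zeta]$ shows that the skew part of $\overast{D}_{J\zeta}J\xi$ in $(\zeta,\xi)$ is exactly that of $J[J\zeta,\xi]$; the remaining symmetric part is then matched to the Berwald connection of Theorem~\ref{Th:Berwald_connec.} using $\overast{D}C=v$ evaluated along $C=JS$ (which yields the normalization $\overast{D}_{J\zeta}(JS)=J[J\zeta,S]$), giving $\overast{D}_{J\zeta}J\xi=J[J\zeta,\xi]=\overcirc{D}_{J\zeta}J\xi$. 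Hence the Chern and Berwald connections share their vertical part, and the content concentrates in the horizontal operator. For the latter, I would first simplify (f): applying $J$ to $\overast{T}(h\zeta,h\xi)$, using $\overast{D}F=0$ and the fact that $JF=\mathrm{id}$ on $V(TM)$, reduces $J\,\overast{T}(h\zeta,h\xi)=0$ to the skew relation $\overast{D}_{h\zeta}J\xi-\overast{D}_{h\xi}J\zeta=J[h\zeta,h\xi]$, while horizontal metric compatibility (d) gives $g(\overast{D}_{h\zeta}J\xi,J\eta)+g(J\xi,\overast{D}_{h\zeta}J\eta)=(h\zeta)\,g(J\xi,J\eta)$. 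Writing $A(\zeta,\xi,\eta):=g(\overast{D}_{h\zeta}J\xi,J\eta)$, these two relations, symmetric in $(\xi,\eta)$ and skew in $(\zeta,\xi)$, combine by the usual cyclic Koszul manipulation into a closed formula for $A$; since $g$ is non-degenerate on $V(TM)$ by \eqref{metricg} and the maximal rank of $\Omega$, this determines $\overast{D}_{h\zeta}J\xi$ uniquely. As (d) and (f) are precisely the axioms fixing the horizontal part of the Cartan connection, the Koszul solution must coincide with it, namely $\overast{D}_{h\zeta}J\xi=\overcirc{D}_{h\zeta}J\xi+\mathcal{C}'(\zeta,\xi)=D_{h\zeta}J\xi$ as in \eqref{cartanconn.}.

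For existence I would run this backwards: define $\overast{D}$ on the two sectors by $\overast{D}_{J\zeta}J\xi=J[J\zeta,\xi]$ and $\overast{D}_{h\zeta}J\xi=\overcirc{D}_{h\zeta}J\xi+\mathcal{C}'(\zeta,\xi)$, together with $\overast{D}F=0$, extend through the reduction of the first paragraph, and verify (a)--(f) directly, borrowing (b) and (e) from the Berwald verification (Theorem~\ref{Th:Berwald_connec.}) and (d),(f) from the Cartan verification (Theorem~\ref{Th:Cartan_connec.}). The main obstacle is the horizontal Koszul inversion: one must check that the cyclic combination of the metric and torsion relations is self-consistent and genuinely produces a $g$-dual vertical vector (this is where non-degeneracy of $g$ on $V(TM)$ is essential), and that the semi-basic tensor it extracts is exactly the second Cartan tensor $\mathcal{C}'$, so that the constructed connection is forced to agree with Cartan's on the horizontal sector; by contrast the vertical step, once the identity $[J,J]=0$ is invoked, is comparatively routine.
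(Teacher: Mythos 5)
Your horizontal sector is fine, but the vertical sector of the uniqueness argument has a genuine gap. From (e) together with $[J,J]=0$ you obtain only the \emph{skew} part, $\overast{D}_{J\zeta}J\xi-\overast{D}_{J\xi}J\zeta=[J\zeta,J\xi]=J[J\zeta,\xi]-J[J\xi,\zeta]$, and from (b) (via Lemma \ref{jx}) only the normalization $\overast{D}_{J\zeta}JS=J\zeta$. These two constraints leave the symmetric part of $\overast{D}_{J\zeta}J\xi$ free up to an arbitrary symmetric semi-basic tensor $B$ with $B(\zeta,S)=0$, and no other quoted axiom reaches the vertical sector: (d) and (f) constrain only $\overast{D}_{h\zeta}J\xi$ (your Koszul inversion correctly shows they determine it), while (a) and (c) were already spent on the reduction to the two partial operators. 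The first Cartan tensor $\mathcal{C}$ is precisely such a $B$: it is symmetric, semi-basic, and $\mathcal{C}(\zeta,S)=0$ by (\ref{c(s)}). Consequently the Cartan connection $D$ of Theorem \ref{Th:Cartan_connec.}, whose vertical operator is $D_{J\zeta}J\xi=J[J\zeta,\xi]+\mathcal{C}(\zeta,\xi)$, satisfies \emph{all} of the conditions (a)--(f) as quoted --- full metricity $Dg=0$ implies the h-metricity (d), and (e), (f) are among its own defining axioms --- so your step ``the remaining symmetric part is then matched to the Berwald connection'' cannot be justified from (a), (b), (c), (e): nothing in those axioms distinguishes $J[J\zeta,\xi]$ from $J[J\zeta,\xi]+\mathcal{C}(\zeta,\xi)$.

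Note also that the paper itself offers no proof of this statement; it is imported from \cite{Chern}, and there the vertical operator is pinned down by an explicit \emph{normality} hypothesis --- the normal almost-projectable condition $\overast{D}_{J\eta}J\xi=J[J\eta,\xi]$ defined in this paper just before the Hashiguchi theorem --- from which your (b) and (e) then \emph{follow} (via Lemma \ref{jx}, (\ref{c(s)}) and $[J,J]=0$), rather than conversely as you attempt. With that hypothesis added, the rest of your proposal goes through: the deduction $\overast{D}F=0$ from $\overast{D}J=\overast{D}h=0$ is exactly Lemma \ref{df=0}, the reduction of $\overast{D}$ to the operators $\overast{D}_{J\zeta}J\xi$ and $\overast{D}_{h\zeta}J\xi$ is the same decomposition used in the paper's Hashiguchi proof, and your horizontal Koszul argument from (d) and (f), using nondegeneracy of $g$ on $V(TM)$, is the same mechanism that proof applies in the vertical sector (there with v-metricity and the vv-torsion condition). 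The repair, however, requires adopting normality as an axiom, not deriving it; as transcribed, conditions (a)--(f) admit at least two solutions, $\overast{D}$ and $D$, so no blind derivation of $\overast{D}_{J\zeta}J\xi=\overcirc{D}_{J\zeta}J\xi$ from them can succeed.
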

  The  connection  \, $\overast{D}$  is called the Chern  connection and its    formulae    are characterized  by:
\begin{equation}\label{cartanconn.}
  \left.
    \begin{array}{rcl}
  \overast{D}_{J\zeta }J\xi &=&\overcirc{D}_{J\zeta }J\xi ,\\
  \overast{D}_{h\zeta }J\xi &=&\overcirc{D}_{h\zeta }J\xi +\mathcal{C}'(\zeta ,\xi ),\\
  \overast{D} F&=&0.
\end{array}
  \right\}
\end{equation}
For the purpose of our subsequent use, we provide the following lemmas:
  \begin{lem} For  Cartan connection, the (h)h-torsion ${T}(h\zeta ,h\xi )$ and (h)v-torsion $T(h\zeta ,J\xi )$    can be calculated  by
$${T}(h\zeta ,h\xi )=\mathfrak{R}(\zeta ,\xi ),\quad T(h\zeta ,J\xi )=(\mathcal{C}'-F\mathcal{C})(\zeta ,\xi ),$$
where $\mathfrak{R}$ is the curvature of the Barthel connection.
\end{lem}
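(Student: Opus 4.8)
The plan is to reduce both computations to the already-known Berwald torsion by passing to the difference of the two connections. Recall that the classical torsion of a linear connection $D$ is $T(\zeta,\xi)=D_\zeta\xi-D_\xi\zeta-[\zeta,\xi]$, and the identical formula defines $\overcirc{T}$ from the Berwald connection $\overcirc{D}$. Since $D$ and $\overcirc{D}$ share the same Lie-bracket term, that term cancels in the difference, so $T-\overcirc{T}$ is controlled entirely by the difference tensor $D-\overcirc{D}$, which is read off from the defining formulae of the Cartan connection: on vertical directions $D_{J\zeta}J\xi-\overcirc{D}_{J\zeta}J\xi=\mathcal{C}(\zeta,\xi)$, and on horizontal directions $D_{h\zeta}J\xi-\overcirc{D}_{h\zeta}J\xi=\mathcal{C}'(\zeta,\xi)$.

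The key observation I would exploit is that both connections are compatible with the almost complex structure $F$, that is $DF=0$ and $\overcirc{D}F=0$. Together with $h=FJ$ this yields $D_X(h\xi)=F\,D_X(J\xi)$ and likewise $\overcirc{D}_X(h\xi)=F\,\overcirc{D}_X(J\xi)$, so the difference formulae above transport to horizontal second arguments at the cost of a single factor $F$. Concretely, $D_{J\zeta}(h\xi)-\overcirc{D}_{J\zeta}(h\xi)=F\mathcal{C}(\zeta,\xi)$ and $D_{h\zeta}(h\xi)-\overcirc{D}_{h\zeta}(h\xi)=F\mathcal{C}'(\zeta,\xi)$. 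These four identities are all I need.

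For the $(h)h$-torsion I would expand the difference to get $T(h\zeta,h\xi)-\overcirc{T}(h\zeta,h\xi)=F\mathcal{C}'(\zeta,\xi)-F\mathcal{C}'(\xi,\zeta)$, which vanishes since $\mathcal{C}'$ is symmetric; hence $T(h\zeta,h\xi)=\overcirc{T}(h\zeta,h\xi)=\mathfrak{R}(\zeta,\xi)$ by the preceding lemma identifying the Berwald $(h)h$-torsion with the Barthel curvature. For the $(h)v$-torsion I would similarly expand $T(h\zeta,J\xi)-\overcirc{T}(h\zeta,J\xi)=\big[D_{h\zeta}J\xi-\overcirc{D}_{h\zeta}J\xi\big]-\big[D_{J\xi}h\zeta-\overcirc{D}_{J\xi}h\zeta\big]=\mathcal{C}'(\zeta,\xi)-F\mathcal{C}(\xi,\zeta)$, and then use the symmetry of $\mathcal{C}$ to write this as $(\mathcal{C}'-F\mathcal{C})(\zeta,\xi)$. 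Finally, property (e) of Theorem \ref{Th:Berwald_connec.} gives $\overcirc{T}(J\eta,\cdot)=0$, so by antisymmetry $\overcirc{T}(h\zeta,J\xi)=-\overcirc{T}(J\xi,h\zeta)=0$, which delivers $T(h\zeta,J\xi)=(\mathcal{C}'-F\mathcal{C})(\zeta,\xi)$.

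The main obstacle here is bookkeeping rather than conceptual depth: one must keep strict track of the single factor $F$ introduced by the substitution $h=FJ$, and invoke the symmetry of the correct Cartan tensor (in the correct slot) at each stage, since a misplaced sign or a swapped argument would destroy both the cancellation in the $(h)h$ case and the clean identification in the $(h)v$ case.
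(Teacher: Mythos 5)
Your argument is correct, and it takes a genuinely different route from the one the paper uses for this kind of computation. The paper states this lemma without proof (it is quoted for later use), but its own template is visible in the proof of the analogous Hashiguchi lemma: there the torsion is expanded directly from the explicit formulas (e.g.\ $\overdiamond{D}_{h\zeta}h\xi=Fv[h\zeta,J\xi]$), and the computation is closed by bracket identities of the Barthel connection, namely the vanishing of its torsion $0=t(\zeta,\xi)=v[J\zeta,h\xi]+v[h\zeta,J\xi]-J[h\zeta,h\xi]$, the relation $hF=Fv$, and $\mathfrak{R}(\zeta,\xi)=-v[h\zeta,h\xi]$. You instead work entirely at the level of the difference tensor $D-\overcirc{D}$: the bracket term cancels in $T-\overcirc{T}$, the vertical and horizontal difference formulas $\mathcal{C}$ and $\mathcal{C}'$ are transported to $h$-slots via $h=FJ$ together with $DF=\overcirc{D}F=0$ (so $D_X h\xi=FD_XJ\xi$, correctly yielding $F\mathcal{C}$ and $F\mathcal{C}'$), and the base cases are the previously established Berwald facts $\overcirc{T}(h\zeta,h\xi)=\mathfrak{R}(\zeta,\xi)$ and $\overcirc{T}(J\zeta,\cdot)=0$ (property \textbf{(e)} of Theorem \ref{Th:Berwald_connec.}); the symmetry of $\mathcal{C}'$ kills the $(h)h$-correction and the symmetry of $\mathcal{C}$ gives the clean form $(\mathcal{C}'-F\mathcal{C})(\zeta,\xi)$. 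Each step checks out, including the sign bookkeeping. What each approach buys: yours is modular and avoids all Lie-bracket manipulation, so it generalizes immediately to any connection differing from Berwald by semi-basic tensors (indeed it reproves the Hashiguchi torsion lemma with $\mathcal{C}'$ replaced by $0$ in the horizontal slot, giving $-F\mathcal{C}$ at once); the paper's direct computation is self-contained and does not presuppose that the Berwald torsions have already been identified, at the cost of redoing the Barthel-torsion bracket gymnastics for each connection separately.
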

\begin{lem}\label{car.r,p,q} For  Cartan connection, the h-curvature $R$, hv-curvature $P$,   and v-curvature $Q $ are calculated  as follows:
\begin{description}
  \item[(a)]$ R(\eta ,\kappa )\xi   = \overcirc{R}(\eta ,\kappa )\xi +(D_{h\eta }\mathcal{C}')(\kappa ,\xi )-(D_{h\kappa }\mathcal{C}')(\eta ,\xi )
      +\mathcal{C}'(F\mathcal{C}'(\eta ,\xi ),\kappa )\\
      {\hspace{1.9cm}}-\mathcal{C}'(F\mathcal{C}'(\kappa ,\xi ),\eta )+\mathcal{C}(F\mathfrak{R}(\eta ,\kappa ),\xi ). $
  \item[(b)] $ P(\eta ,\kappa )\xi   = \overcirc{P}(\eta ,\kappa )\xi +(D_{h\eta }{\mathcal{C}})(\kappa ,\xi )-(D_{J\kappa }\mathcal{C}')(\eta ,\xi )
      +\mathcal{C}(F\mathcal{C}'(\eta ,\xi ),\kappa ) \\
      {\hspace{1.9cm}} +{\mathcal{C}}(F\mathcal{C}'(\eta ,\kappa ),\xi )-\mathcal{C}'(F{\mathcal{C}}(\kappa ,\xi ),\eta )-\mathcal{C}'(F{\mathcal{C}}(\eta ,\kappa ),\xi ). $
  \item[(c)] $Q(\eta ,\kappa )\xi ={\mathcal{C}}(F{\mathcal{C}}(\eta ,\xi ),\kappa )-{\mathcal{C}}(F{\mathcal{C}}(\kappa ,\xi ),\eta ),$
\end{description}
where\, $\overcirc{R}$ and\, $\overcirc{P}$ are  the h-curvature and hv-curvature  of Berwald connection, respectively.
\end{lem}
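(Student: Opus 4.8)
The plan is to realize the Cartan connection as a perturbation of the Berwald connection and to push that perturbation through the curvature operator. Since $D$ and $\overcirc{D}$ are both linear connections on $TM$, their difference $A(X,Y):=D_{X}Y-\overcirc{D}_{X}Y$ is a tensor, and the explicit formulae for $D$ in Theorem \ref{Th:Cartan_connec.} read off its values on vertical second arguments: $A(J\zeta,J\xi)=\mathcal{C}(\zeta,\xi)$ and $A(h\zeta,J\xi)=\mathcal{C}'(\zeta,\xi)$, both of which are vertical since $\mathcal{C},\mathcal{C}'$ are semi-basic. Because $DF=\overcirc{D}F=0$, one has $A(X,FW)=FA(X,W)$, so $A$ is determined everywhere; crucially, in every curvature expression below the relevant $A$'s carry a vertical second slot, so only the two boxed values are ever needed. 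I would also record the conversion device: on the vertical bundle $J\circ F$ is the identity (from $FJ=h$ and $Jh=J$), so any vertical vector $W$ may be written $W=J(FW)$, whence $A(X,W)$ becomes $\mathcal{C}(FW,\cdot)$ or $\mathcal{C}'(FW,\cdot)$ according as $X$ is vertical or horizontal.

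First I would substitute $D=\overcirc{D}+A$ into $K^{D}(X,Y)Z=D_{X}D_{Y}Z-D_{Y}D_{X}Z-D_{[X,Y]}Z$ and collect terms, obtaining the standard curvature-difference identity
\begin{align*}
K^{D}(X,Y)Z &=\overcirc{K}(X,Y)Z+(\overcirc{D}_{X}A)(Y,Z)-(\overcirc{D}_{Y}A)(X,Z)\\
&\quad+A(\overcirc{T}(X,Y),Z)+A(X,A(Y,Z))-A(Y,A(X,Z)),
\end{align*}
where $\overcirc{K}$ and $\overcirc{T}$ denote the curvature and torsion of the Berwald connection (the mixed terms $A(X,\overcirc{D}_{Y}Z)$, $A(Y,\overcirc{D}_{X}Z)$ cancel, and the remaining direction terms combine into $A(\overcirc{T}(X,Y),Z)$). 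Next I would specialize $(X,Y,Z)$ to the horizontal/vertical combinations defining the three curvatures, namely $(h\eta,h\kappa,J\xi)$ for $R$, $(h\eta,J\kappa,J\xi)$ for $P$, and $(J\eta,J\kappa,J\xi)$ for $Q$, and insert the available Berwald facts: the lemma recording $\overcirc{T}(h\eta,h\kappa)=\mathfrak{R}(\eta,\kappa)$, the vanishing $\overcirc{T}(J\eta,\cdot)=0$ from Theorem \ref{Th:Berwald_connec.}(e), and the vanishing of the vertical Berwald curvature (which is why no $\overcirc{Q}$ term occurs in (c)). Using the conversion device, the torsion contribution $A(\mathfrak{R}(\eta,\kappa),J\xi)$ becomes the term $\mathcal{C}(F\mathfrak{R}(\eta,\kappa),\xi)$ of (a), and each quadratic piece $A(X,A(Y,Z))$ turns into one of the $\mathcal{C}(F\,\cdot\,,\cdot)$, $\mathcal{C}'(F\,\cdot\,,\cdot)$ terms, the symmetry of $\mathcal{C},\mathcal{C}'$ putting them into the stated order. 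The overall signs are fixed by the curvature convention adopted earlier in the paper.

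The remaining work, and the main obstacle, is the passage from the Berwald covariant derivatives $\overcirc{D}_{X}A$ to the Cartan covariant derivatives $D\mathcal{C}$, $D\mathcal{C}'$ that appear in the statement. Writing $(\overcirc{D}_{X}A)(Y,Z)=(D_{X}A)(Y,Z)$ minus the terms in which $A$ acts on the slots of $A$, and using that $\mathcal{C},\mathcal{C}'$ are semi-basic and symmetric, these correction terms are again quadratic in the Cartan tensors. I expect them to cancel under the $\eta\leftrightarrow\kappa$ antisymmetrization present in the h-curvature (a), which is precisely why only two quadratic terms survive there, while in the hv-curvature (b) no such cancellation is available — $\eta$ and $\kappa$ sit in the inequivalent horizontal and vertical slots — so the extra pair $\mathcal{C}(F\mathcal{C}'(\eta,\kappa),\xi)$ and $\mathcal{C}'(F\mathcal{C}(\eta,\kappa),\xi)$ persists. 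Keeping this bookkeeping consistent with the identities $Jv=0$, $Jh=J$, $FJ=h$, $Fh=-J$ and with the homogeneity relations $\mathcal{C}(\eta,S)=\mathcal{C}'(\eta,S)=0$ is the delicate part; once it is organized, collecting terms yields the three formulae (a)--(c).
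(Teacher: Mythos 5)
Your strategy — set $A(X,Y):=D_XY-\overcirc{D}_XY$, read off $A(J\zeta,J\xi)=\mathcal{C}(\zeta,\xi)$, $A(h\zeta,J\xi)=\mathcal{C}'(\zeta,\xi)$, extend by $A(X,FW)=FA(X,W)$, and push $A$ through the standard curvature-difference identity — is sound, and your difference identity itself is correct; note that the paper actually states this lemma without proof (it is quoted from the Cartan-connection literature), so a self-contained derivation along these lines is legitimate. Your torsion bookkeeping is also right: $\overcirc{T}(J\zeta,\cdot)=0$ kills the torsion term in (b) and (c), and $A(\mathfrak{R}(\eta,\kappa),J\xi)=\mathcal{C}(F\mathfrak{R}(\eta,\kappa),\xi)$ gives the last term of (a).

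The genuine gap is in your final paragraph, and it is not mere bookkeeping: the correction terms converting $\overcirc{D}A$ into $D\mathcal{C}$, $D\mathcal{C}'$ do \emph{not} cancel under the $\eta\leftrightarrow\kappa$ antisymmetrization, and assuming they do yields (a) and (c) with the wrong signs. Concretely, $(D_{h\eta}\mathcal{C}')(\kappa,\xi)=(\overcirc{D}_{h\eta}\mathcal{C}')(\kappa,\xi)+\mathcal{C}'(F\mathcal{C}'(\kappa,\xi),\eta)-\mathcal{C}'(F\mathcal{C}'(\eta,\kappa),\xi)-\mathcal{C}'(F\mathcal{C}'(\eta,\xi),\kappa)$; antisymmetrizing in $\eta,\kappa$ cancels only the middle (symmetric) term, while the other two \emph{survive and double}, which is precisely why the lemma carries $+\mathcal{C}'(F\mathcal{C}'(\eta,\xi),\kappa)-\mathcal{C}'(F\mathcal{C}'(\kappa,\xi),\eta)$ — the opposite sign to the quadratic terms $\mathcal{C}'(F\mathcal{C}'(\kappa,\xi),\eta)-\mathcal{C}'(F\mathcal{C}'(\eta,\xi),\kappa)$ that your difference formula produces alongside $\overcirc{D}$-derivatives. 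Worse, in (c) your formula leaves the residue $(\overcirc{D}_{J\eta}A)(J\kappa,J\xi)-(\overcirc{D}_{J\kappa}A)(J\eta,J\xi)$, which does not vanish although the stated $Q$ contains no derivative terms at all; if you drop it, you obtain $\mathcal{C}(F\mathcal{C}(\kappa,\xi),\eta)-\mathcal{C}(F\mathcal{C}(\eta,\xi),\kappa)$, the negative of (c). Closing this requires a separate identity you never formulate, namely
\begin{equation*}
(\overcirc{D}_{J\eta}\mathcal{C})(\kappa,\xi)-(\overcirc{D}_{J\kappa}\mathcal{C})(\eta,\xi)
=2\left\{\mathcal{C}(F\mathcal{C}(\eta,\xi),\kappa)-\mathcal{C}(F\mathcal{C}(\kappa,\xi),\eta)\right\},
\end{equation*}
which follows from $(\overcirc{D}_{J\zeta}g)(J\eta,J\xi)=2\,\mathcal{C}_b(\zeta,\eta,\xi)$ (a consequence of $Dg=0$ and the explicit formulae of $D$) together with the total symmetry of $(\overcirc{D}_{J\eta}\mathcal{C}_b)(\kappa,\xi,\mu)$ in all four slots — the intrinsic counterpart of the local facts that $\dot{\partial}_l C_{ijk}$ is totally symmetric while $\dot{\partial}_l g^{im}$ contributes the $-2C$-terms that flip the sign in $S^i_{hjk}$. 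With this identity, and with the analogous conversions in (a) and (b) carried out explicitly (I checked that (b) then balances exactly, the terms $\mathcal{C}(F\mathcal{C}'(\eta,\kappa),\xi)$ and $\mathcal{C}'(F\mathcal{C}(\eta,\kappa),\xi)$ emerging from the conversion of $(D_{h\eta}\mathcal{C})$ and $(D_{J\kappa}\mathcal{C}')$, not from your quadratic $A$-terms), your argument becomes a complete proof; as written, the signs are not "fixed by the curvature convention" but by precisely the computation you deferred.
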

\begin{lem}\label{car.curv.}
The curvatures of the Cartan connection $D$ have the following properties:
\begin{description}
  \item[(a)]$R(\eta ,\kappa )S=\mathfrak{R}(\eta ,\kappa )$.
  \item[(b)] $P(\eta ,\kappa )S=\mathcal{C}'(\eta ,\kappa )$.
  \item[(c)]$P(S,\eta )\kappa =P(\eta ,S)\kappa =0$.
  \item[(d)]$Q(S,\eta )\kappa =Q(\eta ,S)\kappa =Q(\eta ,\kappa )S=0.$
\end{description}
\end{lem}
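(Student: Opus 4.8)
The plan is to read the four identities directly off the explicit expressions for $R$, $P$, $Q$ in Lemma~\ref{car.r,p,q}, setting one argument equal to the canonical spray $S$ and then cancelling terms using the two features that single out $S$: the vanishing \eqref{c(s)} of the Cartan tensors along $S$ (which, with their symmetry, also gives $\mathcal{C}(S,\cdot)=\mathcal{C}'(S,\cdot)=0$), and a short list of derivative identities for $S$. First I would record the auxiliary facts
\begin{equation*}
 S=FC,\qquad D_{h\zeta}S=0,\qquad D_{J\zeta}S=h\zeta .
\end{equation*}
The first holds since $FC=F(JS)=(FJ)S=hS=S$; the other two follow from $DF=0$ and $DC=v$, since $D_{h\zeta}S=F(D_{h\zeta}C)=F(v\,h\zeta)=0$ and $D_{J\zeta}S=F(D_{J\zeta}C)=F(J\zeta)=h\zeta$, using $vh=0$ and $vJ=J$. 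I would also use that, being semi-basic, $\mathcal{C}'$ depends on its slots only through $h$ and $J$, so that $\mathcal{C}'(\eta,h\kappa)=\mathcal{C}'(\eta,\kappa)$ and $\mathcal{C}'(h\eta,\kappa)=\mathcal{C}'(\eta,\kappa)$.

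With these in hand, statement (d) is immediate: in $Q(\eta,\kappa)\xi=\mathcal{C}(F\mathcal{C}(\eta,\xi),\kappa)-\mathcal{C}(F\mathcal{C}(\kappa,\xi),\eta)$, placing $S$ in any one of the three slots forces either an inner or an outer factor $\mathcal{C}(\,\cdot\,,S)$ in every summand, so all three identities vanish termwise. For (a) I would set $\xi=S$ in Lemma~\ref{car.r,p,q}(a): the two $D\mathcal{C}'$ terms die because $(D_{h\eta}\mathcal{C}')(\kappa,S)=-\mathcal{C}'(\kappa,D_{h\eta}S)=0$, the two quadratic terms die through $\mathcal{C}'(\,\cdot\,,S)=0$, and the last term dies through $\mathcal{C}(\,\cdot\,,S)=0$, leaving $R(\eta,\kappa)S=\overcirc{R}(\eta,\kappa)S$; it then suffices to invoke the Berwald identity $\overcirc{R}(\eta,\kappa)S=\mathfrak{R}(\eta,\kappa)$.

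Statement (b) runs the same way: setting $\xi=S$ in Lemma~\ref{car.r,p,q}(b), all summands vanish except $-(D_{J\kappa}\mathcal{C}')(\eta,S)$, which by the Leibniz rule reduces to $\mathcal{C}'(\eta,D_{J\kappa}S)=\mathcal{C}'(\eta,h\kappa)=\mathcal{C}'(\eta,\kappa)$; combined with the Berwald identity $\overcirc{P}(\eta,\kappa)S=0$ this gives $P(\eta,\kappa)S=\mathcal{C}'(\eta,\kappa)$.

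The hard part will be (c), where $S$ occupies a direction slot rather than the differentiated slot, so that $D_{h\eta}S=0$ no longer clears the covariant-derivative terms. Substituting $\eta=S$ (resp. $\kappa=S$) into Lemma~\ref{car.r,p,q}(b), discarding every summand carrying a factor $\mathcal{C}(\,\cdot\,,S)$ or $\mathcal{C}'(\,\cdot\,,S)$, and using $hS=S$, $JS=C$ and $D_{J\eta}S=h\eta$, one is left with
\begin{align*}
 P(S,\eta)\kappa &= \overcirc{P}(S,\eta)\kappa+(D_{S}\mathcal{C})(\eta,\kappa)+\mathcal{C}'(\eta,\kappa),\\
 P(\eta,S)\kappa &= \overcirc{P}(\eta,S)\kappa-(D_{C}\mathcal{C}')(\eta,\kappa).
\end{align*}
Thus (c) is equivalent to the two homogeneity relations $(D_{S}\mathcal{C})(\eta,\kappa)=-\mathcal{C}'(\eta,\kappa)$ and $(D_{C}\mathcal{C}')(\eta,\kappa)=0$, provided the Berwald hv-curvature satisfies $\overcirc{P}(S,\eta)\kappa=\overcirc{P}(\eta,S)\kappa=0$. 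These Berwald facts, and those used in (a) and (b), I would either cite from Grifone~\cite{r22} or prove by the identical substitution applied to the Berwald curvature formulae; the relations $D_{S}\mathcal{C}=-\mathcal{C}'$ and $D_{C}\mathcal{C}'=0$ should follow from the degrees of homogeneity of $\mathcal{C}$, $\mathcal{C}'$ and $S$ together with $DC=v$, and pinning these down cleanly is the one genuinely non-formal step of the proof.
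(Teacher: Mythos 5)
Note first that the paper does not prove this lemma at all: it is stated as background in the section on Berwald, Cartan and Chern connections and is imported from Grifone \cite{r22} and Youssef \cite{Nabil.1}, so your attempt can only be judged on its own merits, not against an in-paper argument. On those merits, your treatment of (a), (b) and (d) is correct and complete modulo the Berwald facts you propose to cite, and those are indeed citable (or provable in one line: since $\overcirc{D}C=v$, one has $\overcirc{R}(\eta,\kappa)S=\overcirc{K}(h\eta,h\kappa)C=-\overcirc{D}_{[h\eta,h\kappa]}C=-v[h\eta,h\kappa]=\mathfrak{R}(\eta,\kappa)$, and $\overcirc{P}$ kills $S$ in every slot by the total symmetry of $G^{i}_{hjk}=\dot{\partial}_h\dot{\partial}_j\dot{\partial}_k G^i$ together with homogeneity). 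Your auxiliary identities $S=FC$, $D_{h\zeta}S=0$, $D_{J\zeta}S=h\zeta$ are right, and your reduction of (c) to the two relations $D_S\mathcal{C}=-\mathcal{C}'$ and $D_C\mathcal{C}'=0$ is exactly what the substitution yields; your style here in fact matches how the paper later argues the Hashiguchi analogue (Proposition \ref{R,P,S hashi}).

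The genuine soft spot is the closing claim that both outstanding relations ``follow from the degrees of homogeneity.'' That mechanism works for $D_C\mathcal{C}'=0$: the Cartan torsion satisfies $T(C,X)=0$ for every $X$ (both $T(J\zeta,J\eta)=0$ and $T(h\zeta,J S)=(\mathcal{C}'-F\mathcal{C})(\zeta,S)=0$ by \eqref{c(s)}), so $D_CX=[C,X]+vX$, and since $\mathcal{C}'$ is a semi-basic vector $2$-form of homogeneity degree $0$, i.e.\ $[C,\mathcal{C}']=-\mathcal{C}'$, one gets $(D_C\mathcal{C}')(\eta,\xi)=[C,\mathcal{C}'](\eta,\xi)+\mathcal{C}'(\eta,\xi)=0$. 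But it cannot work for $D_S\mathcal{C}=-\mathcal{C}'$: this is a \emph{horizontal} derivative along $S$, on which the Euler/homogeneity operator $\mathcal{L}_C$ says nothing. In local terms it is precisely the classical identity $P^{i}_{jk}=C^{i}_{jk|0}$ (equivalently $G^{h}_{ij}=\Gamma^{h}_{ij}+C^{h}_{ij|0}$, recorded in the paper's appendix), and its proof genuinely uses the metric axioms of the Cartan connection ($Dg=0$ together with the torsion conditions), e.g.\ by differentiating $g$-compatibility along $S$ and exploiting the total symmetry of $\mathcal{C}_b$ and $\mathcal{C}'_b$ (Lemma \ref{cdash}). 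The relation is true, so your proof is repairable, but as written part (c) rests on an identity whose proposed justification is insufficient for half of it; you should either prove $D_S\mathcal{C}=-\mathcal{C}'$ by such a metric computation or cite it explicitly (\cite{r22}, \cite{Nabil.1}).
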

\begin{lem}\label{jx}
A semi spray $S$ satisfies the following property
$$J[J\eta ,S]=J\eta ,\,\, \forall \eta \in \cppp.$$
\end{lem}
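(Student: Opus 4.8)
The plan is to recognize $J[J\eta,S]$ as the image under $J$ of the Barthel connection $\Gamma=[J,S]$ evaluated on $\eta$, and then to collapse everything using $J^{2}=0$ together with the defining relation $J\Gamma=J$. Since this reduces the claim to two short manipulations, I would first set up the Fr\"{o}licher--Nijenhuis identity and the antisymmetry of the bracket, and only afterwards substitute the connection properties.

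First I would apply the identity $[\zeta,L]\xi=[\zeta,L\xi]-L[\zeta,\xi]$, valid for a vector field $\zeta$ and a vector $1$-form $L$, with $\zeta=S$, $L=J$ and $\xi=\eta$. This yields $[S,J]\eta=[S,J\eta]-J[S,\eta]$. Next I would invoke the graded antisymmetry of the Fr\"{o}licher--Nijenhuis bracket: since $S$ has degree $0$ and $J$ has degree $1$, one has $[S,J]=-[J,S]=-\Gamma$, where $\Gamma=[J,S]$ is the nonlinear (Barthel) connection of Theorem \ref{Barthel}. Rearranging the previous display then gives $\Gamma\eta=[J\eta,S]+J[S,\eta]$.

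The final step is to act by $J$. Using $J^{2}=0$ from \eqref{JJ}, the term $J^{2}[S,\eta]$ vanishes, leaving $J\Gamma\eta=J[J\eta,S]$. Since $\Gamma$ is a nonlinear connection it satisfies $J\Gamma=J$, so $J\Gamma\eta=J\eta$, and therefore $J[J\eta,S]=J\eta$ for every $\eta\in\cppp$, as required.

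I expect the only delicate point to be the sign bookkeeping: the step $[S,J]=-[J,S]$ must use the \emph{graded} antisymmetry of the Fr\"{o}licher--Nijenhuis bracket (with the sign governed by the product of the degrees $0$ and $1$), and not naive anticommutativity. Once this sign is correct and $[J,S]$ is identified with the connection $\Gamma$ satisfying $J\Gamma=J$, the relations $J^{2}=0$ and $J\Gamma=J$ finish the computation immediately; in particular the semispray property $JS=C$ enters only through the fact that $[J,S]$ is a genuine nonlinear connection. As a sanity check one may also verify the identity in adapted coordinates, where $[J\eta,S]$ for $\eta=\partial_{x^{j}}$ produces $\partial_{x^{j}}$ plus a purely vertical term, and $J$ returns exactly $\partial_{y^{j}}=J\eta$.
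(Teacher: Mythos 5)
Your bracket manipulations are all correct: with the paper's formula $[\zeta,L]\xi=[\zeta,L\xi]-L[\zeta,\xi]$ and the graded antisymmetry $[J,S]=-[S,J]$ you correctly obtain $\Gamma\eta=[J\eta,S]+J[S,\eta]$, and applying $J$ with $J^{2}=0$ gives $J\Gamma\eta=J[J\eta,S]$; the local sanity check is also right. Note first that the paper states Lemma \ref{jx} with no proof at all, so there is nothing internal to compare against. Judged on its own, your argument has one weak joint: the final step $J\Gamma=J$. Precisely because $J\Gamma\eta=J[J\eta,S]$, the assertion ``$J\Gamma=J$ for $\Gamma=[J,S]$'' is \emph{exactly equivalent} to the lemma you are proving, so invoking ``$\Gamma$ is a nonlinear connection, hence $J\Gamma=J$'' outsources the entire content of the statement. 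Whether this counts as a gap depends on what you may cite: Theorem \ref{Barthel} asserts that $[J,S]$ is a nonlinear connection only for the \emph{canonical} spray of $(M,E)$, whereas the lemma is stated for an arbitrary semispray. To cover the general case you would have to appeal to Grifone's theorem (in \cite{r21,r22}) that $[J,S]$ is a connection for \emph{every} semispray --- legitimate as a citation, but then your proof is a corollary of an external result whose own proof is this very computation, which is uncomfortably close to circular in a paper that bothers to isolate the lemma.

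A self-contained proof that avoids the detour through $\Gamma$ entirely: apply the Nijenhuis-torsion identity \eqref{Nk} with $K=J$ to the pair $(\eta,S)$, using $[J,J]=0$ and $J^{2}=0$ from \eqref{JJ}:
\begin{equation*}
0=[J\eta,JS]+J^{2}[\eta,S]-J[J\eta,S]-J[\eta,JS]
\quad\Longrightarrow\quad
J[J\eta,S]=[J\eta,C]-J[\eta,C],
\end{equation*}
where $JS=C$ is the semispray property. Then evaluate $[C,J]=-J$ from \eqref{JJ} on $\eta$:
\begin{equation*}
[C,J\eta]-J[C,\eta]=-J\eta
\quad\Longleftrightarrow\quad
[J\eta,C]-J[\eta,C]=J\eta,
\end{equation*}
which combined with the previous display yields $J[J\eta,S]=J\eta$. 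This uses only facts already listed in Section 2, makes the role of $JS=C$ explicit rather than implicit, and works verbatim for any semispray, exactly as the lemma claims.
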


\begin{lem}\label{chx}
For a homogeneous connection $\bf\Gamma$, its horizontal projector $\textbf{h}$ satisfies
 $$[C,\textbf{h}\zeta ]=\textbf{h}[C,\zeta ],\,\, \forall \zeta \in \cppp.$$
\end{lem}
\begin{proof}
Since $\bf\Gamma$ is homogeneous, then $\textbf{h}$ is h(1). Thus, $[C,\textbf{h}]=0$ and hence
\begin{eqnarray*}
% \nonumber to remove numbering (before each equation)
   0&=&[C,\textbf{h}]\zeta   \\
   &=&[C,\textbf{h}\zeta ]-\textbf{h}[C,\zeta ].
\end{eqnarray*}
\end{proof}
%%%%%%%%%%%%%%%%%%%%%%%%%%%%%%%%%%%%%%%%%%%%%%%%%%%%%%%%%%%%%%%%%%%%%%%%%%%%%%%%%

%%%%%%%%%%%%%%%%%%%%%%%%%%%%%%%%%%%%%%%%%%%%%%%%%%%%%%%%%%%%%%%%%%%%%%%%%%%%%%%%%%%%%%%%%%%%%%%%%%%%
\section{Hashiguchi connection}

~\par
This section is dedicated to the exploration of the existence, uniqueness, and properties of the Hashiguchi connection. Explicit formulas for the torsion and curvature tensors of this connection are derived, and Bianchi identities are investigated.

For this purpose, we list the following definitions which are essential in the KG-approach to the theory of connections in the study of intrinsic Finsler geometry.
\begin{defn}
A linear connection $\textbf{D}$ on $TM$ is called  regular if $\textbf{D}J=0$ and the map $$\varphi:V(TM)\rightarrow V(TM),$$
 given by $\zeta \rightarrow \textbf{D}_\zeta C$, defines an isomorphism on $V(TM)$.
\end{defn}
The above map $\varphi$  can be considered as a restriction to $V(TM)$ of a map $\widetilde{\varphi}=\textbf{D}_\zeta C$. For a regular
connection $\textbf{D}$ on $TM$ there is corresponding  connection $\bf{\Gamma}$  on $M$ given  by $${\bf{\Gamma}={I-2\varphi^{-1}}\circ
 \textbf{D}} {C}, $$
where $\bf{\Gamma}$ is called  induced by $\textbf{D}$.

\begin{defn}
Consider  a regular  connection $\textbf{D}$    on $TM$ and   the connection $\bf{\Gamma}$   induced on $M$ by $\textbf{D}$. Then, $\textbf{D}$ is
 called  reducible if $\bf{D\Gamma}=0.$
\end{defn}

\begin{defn}
We call  a linear  connection $\textbf{D}$ on $TM$   almost-projectable  if $\textbf{D}{J}=0$ and $\textbf{D}_{J\eta }C=J\eta $, for all $\eta \in T(TM)$.
\end{defn}
If we replace the axiom $\textbf{D}_{J\zeta }C=J\zeta $ with the more general one, that is, $\textbf{D}_{J\eta }J\xi =J[J\eta ,\xi ]$, for all $\eta ,\, \xi \in \cppp$, then the connection $\textbf{D}$ is called normal almost-projectable.

We will refer to the connection $\bf{\Gamma}$ on $M$ induced by the almost-projectable (resp. normal almost-projectable) connection $\textbf{D}$ on $TM$ as the projection of $\textbf{D}$ . More precisely, we will say that $\textbf{D}$  projects (resp. projects normally) onto $\bf{\Gamma}$.

\begin{defn}
Assume that $\bf{\Gamma}$ is   connection on $M$. Then, a reducible connection $\textbf{D}$ on $TM$ that projects on $\bf{\Gamma}$ is the lift of $\bf{\Gamma}$. If $\textbf{D}$ is normal, then the lift of $\bf{\Gamma}$ is considered normal.

\end{defn}

\begin{defn}
 Let a linear connection $\textbf{D}$ on $TM$. $\textbf{D}$ is said to be horizontally metric or (h-metrical) if it satisfies that
 $\textbf{D}_{h\zeta }g=0$, for all $\zeta  \in \cppp$.
\end{defn}

\begin{lem}\label{df=0}
Consider a reducible connection $\textbf{D}$  with the almost-complex structure $\textbf{F}$    associated to
 the connection $\bf{\Gamma}$ induced by $\textbf{D}$. Then,  we have $\textbf{DF}=0$.
\end{lem}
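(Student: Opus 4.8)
The plan is to show $\mathbf{DF}=0$ by exploiting the three defining relations $FJ=h$, $Fh=-J$, $F^2=-I$ together with the two hypotheses: $\mathbf D$ is reducible, so $\mathbf{D\Gamma}=0$ (equivalently $\mathbf{D}h=\mathbf{D}v=0$, since $h=\frac{1}{2}(I+\Gamma)$ and $v=\frac{1}{2}(I-\Gamma)$), and $\mathbf D$ is regular with $\mathbf D J=0$ (regularity includes $\mathbf D J=0$ by definition). The key observation is that $F$ is built entirely out of $J$, $h$, $v$, all of which are parallel, so differentiating the defining identities of $F$ should force $\mathbf{D}F$ to annihilate both the horizontal and the vertical subbundles, hence vanish.

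First I would recall that for a linear connection $\mathbf D$ and a vector $1$-form $K$, the covariant derivative $\mathbf D K$ is the vector-valued tensor $(\mathbf D_\zeta K)\xi = \mathbf D_\zeta(K\xi) - K(\mathbf D_\zeta \xi)$, so $\mathbf D K=0$ exactly says that $\mathbf D$ commutes with $K$. Thus the hypotheses read: $\mathbf D_\zeta$ commutes with $J$, with $h$, and with $v$. I want to deduce that $\mathbf D_\zeta$ commutes with $F$ for every $\zeta$. Since $TTM = H(TM)\oplus V(TM)$, it suffices to check $(\mathbf D_\zeta F)\xi=0$ on horizontal arguments $\xi=h\eta$ and on vertical arguments $\xi=J\eta$ (the image of $J$ being $V(TM)$ by \eqref{JJ}).

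The two computations are parallel. On the vertical part I would use $FJ=h$: compute $(\mathbf D_\zeta F)(J\eta)=\mathbf D_\zeta(FJ\eta)-F(\mathbf D_\zeta(J\eta))$; applying $FJ=h$ and then using $\mathbf D h=0$, $\mathbf D J=0$ to pull the derivative past each factor, both terms reduce to $h(\mathbf D_\zeta\eta)$ expressed via $F$ and $J$, and they cancel. Concretely, $\mathbf D_\zeta(FJ\eta)=\mathbf D_\zeta(h\eta)=h(\mathbf D_\zeta\eta)$ by $\mathbf Dh=0$, while $F(\mathbf D_\zeta(J\eta))=F(J\mathbf D_\zeta\eta)=FJ(\mathbf D_\zeta\eta)=h(\mathbf D_\zeta\eta)$ by $\mathbf DJ=0$ and $FJ=h$; the difference is zero. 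On the horizontal part I would use $Fh=-J$ in exactly the same way: $\mathbf D_\zeta(Fh\eta)=\mathbf D_\zeta(-J\eta)=-J\mathbf D_\zeta\eta$, and $F(\mathbf D_\zeta(h\eta))=F(h\mathbf D_\zeta\eta)=-J\mathbf D_\zeta\eta$, again cancelling. Since every $\xi$ decomposes as $h\xi+v\xi$ with $v\xi\in V(TM)=\mathrm{Im}(J)$, linearity then gives $(\mathbf D_\zeta F)\xi=0$ for all $\xi$, whence $\mathbf{DF}=0$.

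I do not anticipate a serious obstacle here; the result is essentially an algebraic consequence of $F$ being a fixed polynomial combination of the parallel tensors $J$, $h$, $v$. The only point requiring mild care is making sure that reducibility genuinely supplies $\mathbf D h=\mathbf D v=0$ (not merely $\mathbf D\Gamma=0$), which is immediate since $h$ and $v$ are affine functions of $\Gamma$ with constant coefficients and $\mathbf D I=0$; and that one may restrict the verification to the horizontal and vertical summands, which is justified by the decomposition $TTM=H(TM)\oplus V(TM)$ induced by $\mathbf\Gamma$. Everything else is the routine Leibniz manipulation sketched above.
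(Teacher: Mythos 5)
Your proof is correct and follows essentially the same route as the paper's: both arguments reduce $\mathbf{DF}=0$ to the parallelism $\mathbf{D}J=\mathbf{D}h=\mathbf{D}v=0$ (the latter two from reducibility) combined with the algebraic identities $FJ=h$ and $Fh=-J$. The only cosmetic difference is that you verify $(\mathbf{D}_\zeta F)\xi=0$ separately on the horizontal and vertical summands of $TTM=H(TM)\oplus V(TM)$, while the paper runs a single operator chain $F(\mathbf{D}\zeta)=\cdots=\mathbf{D}F\zeta$ on an arbitrary argument (additionally invoking $JF=v$ and $hF=Fv$); your split-by-decomposition presentation is, if anything, notationally cleaner.
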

\begin{proof}
Since $\textbf{D}$ is reducible, then we have $\bf{D\Gamma}=0$. Thus, we get $\textbf{Dh}=\textbf{Dv}=0$, where $\textbf{h}$
 and $\textbf{v}$ are the associated horizontal   vertical projectors to $\bf{\Gamma}$. Therefore, using the facts that $\bf{FJ=h}$, $\bf{Fh=-J}$, and $\bf{JF=v}$, we obtain
\begin{eqnarray*}
% \nonumber to remove numbering (before each equation)
  \textbf{F}(\textbf{D}\zeta ) &=&\textbf{F}(\textbf{Dh}\zeta +\textbf{Dv}\zeta )  \\
   &=&\textbf{F}(\textbf{hD}\zeta +\textbf{JDF}\zeta )\\
   &=&-\textbf{JD}\zeta +\textbf{hDF}\zeta \\
   &=&\textbf{D}(-\textbf{J}\zeta )+\textbf{DhF}\zeta \\
   &=&\textbf{DFh}\zeta +\textbf{DFv}\zeta \\
   &=&\textbf{DF}(\textbf{h}\zeta +\textbf{v}\zeta )\\
   &=&\textbf{DF}\zeta .
\end{eqnarray*}
Hence, $\textbf{DF}=0$.
\end{proof}

\begin{lem}\label{cdash}
The second Cartan tensor $\mathcal{C}'_b(\zeta ,\kappa ,\xi ):=g(\mathcal{C}'(\zeta ,\kappa ),J\xi )$ is completely  symmetric.
\end{lem}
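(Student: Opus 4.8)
The plan is to collapse the metric expression defining $\mathcal{C}'_b$ back onto the defining formula of $\mathcal{C}'$, and then to read off complete symmetry as the combination of two independent transpositions.

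First I would rewrite $\mathcal{C}'_b(\zeta,\kappa,\xi)=g(\mathcal{C}'(\zeta,\kappa),J\xi)$ entirely in terms of $\Omega$. Using the defining relation $g(\cdot,\cdot)=\Omega(\cdot,F\cdot)$ from \eqref{metricg} together with $FJ=h$, this becomes $\Omega(\mathcal{C}'(\zeta,\kappa),h\xi)$. Since $\mathcal{C}'$ is semi-basic, the value $\mathcal{C}'(\zeta,\kappa)$ is vertical (it lies in $\ker J=V(TM)$), and $\Omega$ annihilates any pair of vertical vectors: substituting $J\eta$ for $\xi$ in $\overline{g}(J\zeta,J\xi)=\Omega(J\zeta,\xi)$ and using $J^2=0$ gives $\Omega(J\zeta,J\eta)=0$. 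Hence $\Omega(\mathcal{C}'(\zeta,\kappa),v\xi)=0$, so I may replace $h\xi$ by $h\xi+v\xi=\xi$ and obtain
\begin{equation*}
\mathcal{C}'_b(\zeta,\kappa,\xi)=\Omega(\mathcal{C}'(\zeta,\kappa),\xi)=\tfrac{1}{2}(\mathcal{L}_{h\zeta}g)(J\kappa,J\xi),
\end{equation*}
the last equality being exactly the definition of the second Cartan tensor.

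With this identity in hand, complete symmetry splits into two elementary symmetries. The transposition $\kappa\leftrightarrow\xi$ is immediate: $\mathcal{L}_{h\zeta}g$ is the Lie derivative of the symmetric $(0,2)$-tensor $g$, hence itself a symmetric bilinear form, so $(\mathcal{L}_{h\zeta}g)(J\kappa,J\xi)=(\mathcal{L}_{h\zeta}g)(J\xi,J\kappa)$. The transposition $\zeta\leftrightarrow\kappa$ is supplied by the symmetry of $\mathcal{C}'$ already recorded in the excerpt, namely $\mathcal{C}'(\zeta,\kappa)=\mathcal{C}'(\kappa,\zeta)$, which gives $g(\mathcal{C}'(\zeta,\kappa),J\xi)=g(\mathcal{C}'(\kappa,\zeta),J\xi)$ directly. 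Since the transpositions $(\kappa\,\xi)$ and $(\zeta\,\kappa)$ generate the full symmetric group $S_{3}$, invariance under both forces invariance under every permutation of the three arguments, i.e.\ complete symmetry.

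The only delicate point is the first step: justifying the replacement of $h\xi$ by $\xi$ inside $\Omega(\mathcal{C}'(\zeta,\kappa),h\xi)$, which rests precisely on the vertical-valued (semi-basic) nature of $\mathcal{C}'$ and on $\Omega$ vanishing on vertical pairs. Everything after the reduction is formal, as it uses only the symmetry of $g$ and of $\mathcal{C}'$, both already available.
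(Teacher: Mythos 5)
Your proof is correct: the reduction $\mathcal{C}'_b(\zeta,\kappa,\xi)=\Omega(\mathcal{C}'(\zeta,\kappa),h\xi)=\Omega(\mathcal{C}'(\zeta,\kappa),\xi)=\frac{1}{2}(\mathcal{L}_{h\zeta}g)(J\kappa,J\xi)$ is valid (dropping $v\xi$ is justified exactly as you say, since the semi-basic $\mathcal{C}'$ is vertical-valued and $\Omega$ vanishes on vertical pairs), and the two transpositions --- $\kappa\leftrightarrow\xi$ from the symmetry of the Lie derivative of the symmetric tensor $g$, and $\zeta\leftrightarrow\kappa$ from the recorded symmetry of $\mathcal{C}'$ --- indeed generate all of $S_3$. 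The paper states this lemma without proof, but your argument is essentially identical in approach to its in-line treatment of the first Cartan tensor in the uniqueness part of Theorem \ref{hashiconnc.}, where $g(\mathcal{C}(\zeta,\eta),J\xi)=\Omega(\mathcal{C}(\zeta,\eta),\xi)$ is obtained via $\Omega(\zeta,\eta)=g(\zeta,J\eta)-g(J\zeta,\eta)$ and $J\mathcal{C}=0$, a one-line equivalent of your route through $g(\cdot,\cdot)=\Omega(\cdot,F\cdot)$ and $FJ=h$.
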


We are now prepared to present the existence and uniqueness theorem for the Hashiguchi connection on $TM$.

\begin{thm}\label{hashiconnc.}    Consider the    Finsler manifold $(M,E)$,   then, we have  a unique
lift  $\,\overdiamond{D}$ of the Barthel connection $\Gamma=[J,S]$ such that the following assertions are satisfied:
\begin{description}
  \item[(a)] $\,\,\overdiamond{D}$ is vertically metric: $\,\,\overdiamond{D}_{J\eta }g=0,\,$ for all $ \eta \in \cppp$.
  \item[(b)] The classical torsion $\,\overdiamond{T}(J\zeta ,J\eta )$ satisfies that: $\,\overdiamond{T}(J\zeta ,J\eta )=0, \, $ for all $ \zeta ,\eta \in \cppp$.
  \item[(c)] The classical torsion $\,\overdiamond{T}(h\zeta ,J\eta )$ satisfies that: $v\,\,\overdiamond{T}(h\zeta ,J\eta )=0, \,$ for all $ \zeta ,\eta \in \cppp$.
\end{description}
\end{thm}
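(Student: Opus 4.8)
The plan is to establish uniqueness and existence separately, after reducing the whole problem to determining two vertical-valued families of covariant derivatives. Since $\overdiamond{D}$ is required to be a lift of the Barthel connection $\Gamma=[J,S]$, it is reducible and almost-projectable; hence it automatically satisfies $\overdiamond{D}J=0$, $\overdiamond{D}\Gamma=0$ (equivalently $\overdiamond{D}h=\overdiamond{D}v=0$) and $\overdiamond{D}_{J\eta}C=J\eta$, and by Lemma~\ref{df=0} also $\overdiamond{D}F=0$. Using $h\xi=FJ\xi$ together with $\overdiamond{D}F=0$ and $\overdiamond{D}J=0$, every covariant derivative $\overdiamond{D}_XY$ can be rewritten through the two vertical-valued quantities $\overdiamond{D}_{h\zeta}J\xi$ and $\overdiamond{D}_{J\zeta}J\xi$; note that this reduction uses only reducibility. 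Thus it suffices to pin these down, which will yield uniqueness (once they are forced) and a blueprint for existence.

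To fix the horizontal family I would use (c). Expanding $v\,\overdiamond{T}(h\zeta,J\eta)=v\bigl(\overdiamond{D}_{h\zeta}J\eta-\overdiamond{D}_{J\eta}h\zeta-[h\zeta,J\eta]\bigr)$ and noting that $\overdiamond{D}_{h\zeta}J\eta$ is vertical (since $\overdiamond{D}J=0$) while $\overdiamond{D}_{J\eta}h\zeta=F\overdiamond{D}_{J\eta}J\zeta$ is horizontal (since $h=FJ$ and $\overdiamond{D}F=0$), the projector $v$ annihilates the middle term, and (c) collapses to $\overdiamond{D}_{h\zeta}J\xi=v[h\zeta,J\xi]$. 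This is exactly the Berwald horizontal derivative $\overcirc{D}_{h\zeta}J\xi$, so the horizontal family is uniquely determined.

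To fix the vertical family I would use (a) and (b). Restricted to vertical directions these read $\overdiamond{D}_{J\eta}g=0$ and $\overdiamond{D}_{J\zeta}J\eta-\overdiamond{D}_{J\eta}J\zeta=[J\zeta,J\eta]$, i.e. vertical metricity and vertical torsion-freeness, which are precisely the conditions characterizing the vertical part of the Cartan connection. I would then run a Koszul-type argument inside $V(TM)$: form the cyclic sum $J\zeta\cdot\overline{g}(J\xi,J\eta)+J\xi\cdot\overline{g}(J\zeta,J\eta)-J\eta\cdot\overline{g}(J\zeta,J\xi)$, expand each term via (a), and use (b) to convert the antisymmetric derivative terms into Lie brackets. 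This leaves $\overline{g}(\overdiamond{D}_{J\zeta}J\xi,J\eta)$ expressed solely through derivatives of $\overline{g}$ and brackets. Since $\overline{g}$ is non-degenerate on $V(TM)$ (because $\Omega$ has maximal rank, Definition~\ref{Finsler}(d)) and $J\eta$ exhausts $V(TM)$, this determines $\overdiamond{D}_{J\zeta}J\xi$ uniquely, and it coincides with the Cartan vertical derivative $\overcirc{D}_{J\zeta}J\xi+\mathcal{C}(\zeta,\xi)$. Together with the previous step, uniqueness follows.

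For existence I would glue the two already-established connections, setting $\overdiamond{D}_XY:=\overcirc{D}_{hX}Y+D_{vX}Y$ --- the Berwald connection in horizontal directions and the Cartan connection in vertical directions. One verifies that this is a genuine linear connection ($C^\infty$-linear in $X$, a derivation satisfying Leibniz in $Y$), that it inherits $\overdiamond{D}J=0$, $\overdiamond{D}F=0$, $\overdiamond{D}h=\overdiamond{D}v=0$ and $\overdiamond{D}_{J\eta}C=J\eta$ from its two building blocks --- so it is indeed a lift of $\Gamma$ --- and finally that it fulfills (a) (from $Dg=0$), (b) (from the Cartan identity $T(J\zeta,J\eta)=0$), and (c) (from the Berwald horizontal formula computed above). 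I expect the main obstacle to be the vertical part: making the Koszul identity rigorous within the vertical distribution and invoking non-degeneracy of $\overline{g}$ to secure both well-definedness (tensoriality in each slot) and uniqueness of $\overdiamond{D}_{J\zeta}J\xi$; by comparison the horizontal computation and the lift bookkeeping are routine.
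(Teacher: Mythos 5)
Your proposal is correct, and its uniqueness half follows essentially the paper's own route: both reduce $\overdiamond{D}$ to the two vertical-valued families $\overdiamond{D}_{J\zeta}J\xi$ and $\overdiamond{D}_{h\zeta}J\xi$ via $\overdiamond{D}J=0$, $\overdiamond{D}\Gamma=0$ and Lemma~\ref{df=0}, extract $\overdiamond{D}_{h\zeta}J\eta=v[h\zeta,J\eta]$ from condition (c) (the paper uses $vJ=J$, $vh=0$ where you invoke horizontality of $\overdiamond{D}_{J\eta}h\zeta$ via $\overdiamond{D}F=0$; these are the same observation), and run the same Koszul-type computation, as in \eqref{1}--\eqref{gnapla}, using (a), (b) and the identity $[J\zeta,J\eta]=J[J\zeta,h\eta]+J[h\zeta,J\eta]$ to land on $\overdiamond{D}_{J\zeta}J\eta=J[J\zeta,\eta]+\mathcal{C}(\zeta,\eta)$, i.e.\ \eqref{dhashi.jj}. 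Where you genuinely depart is existence: the paper simply decrees that \eqref{dhashi.f=0}, \eqref{dhashi.jj} and \eqref{dhashi.hj} hold and then verifies, bullet by bullet, that the resulting $\overdiamond{D}$ is a lift of $\Gamma$ and satisfies (a)--(c); you instead glue the two known connections, $\overdiamond{D}_XY:=\overcirc{D}_{hX}Y+D_{vX}Y$, and inherit everything from Theorems~\ref{Th:Berwald_connec.} and~\ref{Th:Cartan_connec.}. Your gluing is sound: since $h$ and $v$ are vector $1$-forms the formula is tensorial in $X$ and Leibniz in $Y$ (because $hX\cdot f+vX\cdot f=X\cdot f$); both building blocks are lifts of the same Barthel connection, so $\overdiamond{D}J=0$, $\overdiamond{D}\Gamma=0$ and $\overdiamond{D}C=v$ follow at once; and (a) comes from the full Cartan metricity $Dg=0$ --- in fact slightly more directly than in the paper, whose existence check verifies metricity only on vertical arguments and leaves the reduction of $\overdiamond{D}_{J\eta}g=0$ to that case implicit. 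What your route buys is brevity and a conceptual payoff: it makes transparent that Hashiguchi is Berwald-horizontal plus Cartan-vertical, exactly the $\mathcal{C}$-process picture of Theorem~\ref{hashi.nab.} and the concluding remarks. The price is that it rests on the prior existence theorems for $\overcirc{D}$ and $D$ (Grifone), whereas the paper's construction is self-contained given the three formulas; when writing yours up you should close the loop explicitly by noting $D_{J\zeta}J\eta=\overcirc{D}_{J\zeta}J\eta+\mathcal{C}(\zeta,\eta)$ and $\overcirc{D}_{h\zeta}J\eta=v[h\zeta,J\eta]$, so that the glued connection coincides with the one forced by your uniqueness argument.
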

The  connection $\,\overdiamond{D}$ mentioned above   is called the Hashiguchi connection.

\begin{proof} We start by  proving the \textbf{uniqueness}. Let $\zeta $, $\eta $ and $\xi $ $\in\cppp$.
Since  $\,\,\overdiamond{D}$ is a lift of the Barthel connection $\Gamma=[J,S]$, then
 $\,\,\overdiamond{D}\Gamma=0$, $\,\,\overdiamond{D} J=0$ and thus, by Lemma \ref{df=0}, we have
 \begin{equation}\label{dhashi.f=0}
 \,\,\overdiamond{D} F=0.
 \end{equation}
 Making  use of the conditions $(\,\,\overdiamond{D}_{J\zeta }g)(J\eta ,J\xi )=0$ and $\,\overdiamond{T}(J\zeta ,J\eta )=0$, we get:
\begin{eqnarray}
% \nonumber to remove numbering (before each equation)
\label{1} J\zeta \cdot g(J\eta ,J\xi )  &=&g(\,\,\overdiamond{D}_{J\zeta }J\eta ,J\xi )+g(J\eta ,\,\,\overdiamond{D}_{J\zeta }J\xi )  \\
\label{2}   J\eta \cdot g(J\xi ,J\zeta )  &=&g(\,\,\overdiamond{D}_{J\eta }J\xi ,J\zeta )+g(J\xi ,\,\,\overdiamond{D}_{J\eta }J\zeta ) \\
\label{3}   J\xi \cdot g(J\zeta ,J\eta )  &=&g(\,\,\overdiamond{D}_{J\xi }J\zeta ,J\eta )+g(J\zeta ,\,\,\overdiamond{D}_{J\xi }J\eta ).
\end{eqnarray}
By adding  (\ref{1}), (\ref{2}) and subtracting  (\ref{3}), we get
\begin{eqnarray}\label{4}
% \nonumber to remove numbering (before each equation)
\nonumber J\zeta \cdot g(J\eta ,J\xi )&+&J\eta \cdot g(J\xi ,J\zeta ) - J\eta \cdot g(J\xi ,J\zeta )= g(\,\,\overdiamond{D}_{J\zeta }J\eta +\,\,\overdiamond{D}_{J\eta }J\zeta ,J\xi )  \\
   && +g(J\eta ,\,\,\overdiamond{D}_{J\zeta }J\xi -\,\,\overdiamond{D}_{J\xi }J\zeta )+g(\,\,\overdiamond{D}_{J\eta }J\xi -\,\,\overdiamond{D}_{J\xi }J\eta ,J\zeta ).
\end{eqnarray}
By the condition $\,\overdiamond{T}(J\zeta ,J\eta )=0$, we have
\begin{equation}\label{jt}
\,\,\overdiamond{D}_{J\zeta }J\eta -\,\,\overdiamond{D}_{J\eta }J\zeta =[J\zeta ,J\eta ].
\end{equation}
From  (\ref{4}) and (\ref{jt}), we get
\begin{eqnarray}\label{gnabla}
% \nonumber to remove numbering (before each equation)
\nonumber  g(2\,\,\overdiamond{D}_{J\zeta }J\eta ,J\xi )  &=&J\zeta \cdot g(J\eta ,J\xi )+J\eta \cdot g(J\xi ,J\zeta ) - J\eta \cdot g(J\xi ,J\zeta )\\
&&{\hspace{-2cm}}+g([J\zeta ,J\eta ],J\xi )- g([J\zeta ,J\xi ],J\eta )-g([J\eta ,J\xi ],J\zeta )
\end{eqnarray}
By the facts that $\Omega(\zeta ,\eta )=g(\zeta ,J\eta )-g(J\zeta ,\eta )$ and     $J{\mathcal{C}}=0$, we have  $$\frac{1}{2}(\mathcal{L}_{J\zeta }(J^\ast g))(\eta ,\xi )=\Omega({\mathcal{C}}(\zeta ,\eta ),\xi )=g({\mathcal{C}}(\zeta ,\eta ),J\xi )
={\mathcal{C}}_b(\zeta ,\eta ,\xi ),$$ which is completely  symmetric. Now,
\begin{eqnarray*}
% \nonumber to remove numbering (before each equation)
  g(2{\mathcal{C}}(\zeta ,\eta ),J\xi ) &=& J\zeta \cdot g(J\eta ,J\xi )-g(J[J\zeta ,\eta ],J\xi )-g(J\eta ,J[J\zeta ,\xi ]),\\
  g(2{\mathcal{C}}(\eta ,\xi ),J\zeta ) &=& J\eta \cdot g(J\xi ,J\zeta )-g(J[J\eta ,\xi ],J\zeta )-g(J\xi ,J[J\eta ,\zeta ]),\\
  -g(2{\mathcal{C}}(\xi ,\zeta ),J\eta ) &=&- J\xi \cdot g(J\zeta ,J\eta )+g(J[J\xi ,\zeta ],J\eta )+g(J\zeta ,J[J\xi ,\eta ]).
\end{eqnarray*}
By adding the  three equations above, we get
\begin{eqnarray}\label{cdash}
  \nonumber  g(2{\mathcal{C}}(\zeta ,\eta ),J\xi )&=&J\zeta \cdot g(J\eta ,J\xi )+J\eta \cdot g(J\xi ,J\zeta )-J\xi \cdot g(J\zeta ,J\eta )\\
 \nonumber  && -g(J[J\zeta ,h\eta ]+J[J\eta ,h\zeta ],J\xi )+g(J[J\xi ,h\zeta ]-J[J\zeta ,h\xi ],J\eta )\\&&
    +g(J[J\xi ,h\eta ]-J[J\eta ,h\xi ],J\zeta )
\end{eqnarray}
One can see that  (\ref{gnabla}) and (\ref{cdash}) imply
\begin{eqnarray}\label{gnapla}
% \nonumber to remove numbering (before each equation)
 \nonumber  g(2\,\,\overdiamond{D}_{J\zeta }J\eta ,J\xi )&=&g(2{\mathcal{C}}(\zeta ,\eta ),J\xi )-g([J\eta ,J\xi ]+J[J\xi ,h\eta ]-J[J\eta ,h\xi ],J\zeta )\\
 \nonumber &&- g([J\zeta ,J\xi ]+J[J\xi ,h\zeta ]-J[J\zeta ,h\xi ],J\eta ) \\
   &&+ g([J\zeta ,J\eta ]+J[J\zeta
   ,h\eta ]+J[J\eta ,h\zeta ],J\xi ).
\end{eqnarray}
Since $J$ satisfy the property  $$[J\zeta ,J\eta ]=J[J\zeta ,h\eta ]+J[h\zeta ,J\eta ],$$ then, we have
\begin{equation}\label{dhashi.jj}
\,\,\overdiamond{D}_{J\zeta }J\eta =J[J\zeta ,\eta ]+{\mathcal{C}}(\zeta ,\eta ).
\end{equation}

Now, using the condition $v\,\,\overdiamond{T}(h\zeta ,J\eta )=0$, we have
$$v\,\,\overdiamond{D}_{h\zeta }J\eta -v\,\,\overdiamond{D}_{J\eta }h\zeta -v[h\zeta ,J\eta ]=0.$$
By the above equation and using that $vJ=J$ and $vh=0$, then we get
\begin{equation}\label{dhashi.hj}
    \overdiamond{D}_{h\zeta }J\eta =v[h\zeta ,J\eta ].
\end{equation}
The right hand side of (\ref{dhashi.jj}) and (\ref{dhashi.hj}) are uniquely and hence $\,\,\overdiamond{D}_\zeta \eta $ is uniquely determined by
(\ref{dhashi.f=0}), (\ref{dhashi.jj})  and (\ref{dhashi.hj}).\\

To prove   the \textbf{existence} of $\,\,\overdiamond{D}$, we define $\,\,\overdiamond{D}$    by  the requirement that   (\ref{dhashi.f=0}), (\ref{dhashi.jj}) and
(\ref{dhashi.hj}) hold for all $\zeta ,\eta  \in \cppp$.

Now, we have to establish  the following properties. $\,\,\overdiamond{D}$ is  lift of $\Gamma=[J,S]$: ( $\,\overdiamond{D} J=0$, $\,\,\overdiamond{D} C=v$, $\,\,\overdiamond{D} \Gamma=0$).

\noindent $\bullet$ $\,\,\overdiamond{D} J=0$, it is sufficient to show that  $J\,\,\overdiamond{D}_{\zeta }\eta =\,\,\overdiamond{D}_\zeta J\eta $. From  (\ref{dhashi.f=0}), (\ref{dhashi.jj}) and
 (\ref{dhashi.hj}), we have
\begin{eqnarray*}
% \nonumber to remove numbering (before each equation)
   J\,\,\overdiamond{D}_{h\zeta }\eta &=& J\,\,\overdiamond{D}_{h\zeta }h\eta +J\,\,\overdiamond{D}_{h\zeta }v\eta  \\
   &=&J\,\,\overdiamond{D}_{h\zeta }h\eta +J\,\,\overdiamond{D}_{h\zeta }JF\eta   \\
   &=&J\,\,\overdiamond{D}_{h\zeta }h\eta +Jv[h\zeta ,v\eta ]  \\
   &=& JFv[h\zeta ,J\eta ]\\
   &=&v[h\zeta ,J\eta ]\\
   &=&\,\,\overdiamond{D}_{h\zeta }J\eta .
\end{eqnarray*}
Similarly, one can show that $J\,\,\overdiamond{D}_{v\zeta }\eta =\,\,\overdiamond{D}_{v\zeta }J\eta $.

\noindent$\bullet$ $\,\,\overdiamond{D} C=v$, we have to show that $\,\,\overdiamond{D}_{h\zeta }C=vh\zeta =0$ and $\,\,\overdiamond{D}_{J\zeta }C=vJ\zeta =J\zeta $ as follows. From
(\ref{dhashi.f=0}),  (\ref{dhashi.jj}),   (\ref{dhashi.hj}) and Lemma \ref{chx}, we get
\begin{eqnarray*}
% \nonumber to remove numbering (before each equation)
   \,\,\overdiamond{D}_{h\zeta }JS&=& v[h\zeta ,JS]
   =-v[C,h\zeta ]
   =-vh[C,\zeta ]
   =0,
\end{eqnarray*}
then  by (\ref{c(s)}) and  Lemma \ref{jx}, we obtain
\begin{eqnarray*}
% \nonumber to remove numbering (before each equation)
   \,\,\overdiamond{D}_{J\zeta }JS&=& J[J\zeta ,S]+\mathcal{C}(\zeta ,S)
   =J\zeta
   =vJ\zeta .
\end{eqnarray*}

\noindent$\bullet$ $\,\,\overdiamond{D} \Gamma=0$ or equivalently $\,\,\overdiamond{D} v=0$ or $\,\,\overdiamond{D} h=0$. We will show only that $h\,\,\overdiamond{D}_{h\zeta }\eta =\,\,\overdiamond{D}_{h\zeta }h\eta $. By  (\ref{dhashi.f=0}),  (\ref{dhashi.jj}) and (\ref{dhashi.hj}), we get
\begin{eqnarray*}
% \nonumber to remove numbering (before each equation)
   h\,\,\overdiamond{D}_{h\zeta }\eta &=& h\,\,\overdiamond{D}_{h\zeta }h\eta +h\,\,\overdiamond{D}_{h\zeta }v\eta  \\
   &=&h\,\,\overdiamond{D}_{h\zeta }h\eta +h\,\,\overdiamond{D}_{h\zeta }JF\eta   \\
   &=&h\,\,\overdiamond{D}_{h\zeta }h\eta +hv[h\zeta ,v\eta ]  \\
   &=& hFv[h\zeta ,J\eta ]\\
   &=&Fv^2[h\zeta ,J\eta ]\\
   &=&Fv[h\zeta ,J\eta ]\\
   &=&\,\,\overdiamond{D}_{h\zeta }h\eta .
\end{eqnarray*}

\noindent$\bullet$ $\,\,\overdiamond{D}$ is h-metrical:
 $g$ is vertically metric $(\,\overdiamond{D}_{J\zeta }g)(J\eta ,J\xi )=0$. By   (\ref{dhashi.f=0}), (\ref{dhashi.jj}) and (\ref{dhashi.hj}), we have
\begin{eqnarray*}
% \nonumber to remove numbering (before each equation)
 (\overdiamond{D}_{J\zeta }g)(J\eta ,J\xi )  &=& J\zeta .g(J\eta ,J\xi )- g(\,\overdiamond{D}_{J\zeta }J\eta ,J\xi )-g(J\eta ,\,\overdiamond{D}_{J\zeta }J\xi ) \\
   &=&  J\zeta .g(J\eta ,J\xi )- g(J[J\zeta ,\eta ]+{\mathcal{C}}(\zeta ,\eta ),J\xi )\\
   &&-g(J\eta ,J[J\zeta ,\xi ]+{\mathcal{C}}(\zeta ,\xi ))\\
   &=&J\zeta .g(J\eta ,J\xi )- g(J[J\zeta ,\eta ],J\xi )-g(J\eta ,J[J\zeta ,\xi ])  \\
   &&-2{\mathcal{C}}_b(\zeta ,\eta ,\xi )\\
   &=&0
\end{eqnarray*}

\noindent$\bullet$ $\,\overdiamond{T}(J\zeta ,J\eta )=0$,  by (\ref{dhashi.f=0}),  (\ref{dhashi.jj}) and (\ref{dhashi.hj}), we have
\begin{eqnarray*}
% \nonumber to remove numbering (before each equation)
  \overdiamond{T}(J\zeta ,J\eta ) &=&\,\overdiamond{D}_{J\zeta }J\eta -\,\overdiamond{D}_{J\eta }J\zeta -[J\zeta ,J\eta ]  \\
   &=&J[J\zeta ,\eta ]+ {\mathcal{C}}(\zeta ,\eta )-J[J\eta ,\zeta ]- {\mathcal{C}}(\eta ,\zeta )-[J\zeta ,J\eta ]\\
   &=&0.
\end{eqnarray*}

\noindent$\bullet$ $v\,\overdiamond{T}(h\zeta ,J\eta )=0$,  by (\ref{dhashi.f=0}),  (\ref{dhashi.jj}) and (\ref{dhashi.hj}), we have
\begin{eqnarray*}
% \nonumber to remove numbering (before each equation)
  v\,\overdiamond{T}(h\zeta ,J\eta ) &=&v\,\,\overdiamond{D}_{h\zeta }J\eta -v\,\,\overdiamond{D}_{J\eta }h\zeta -v[h\zeta ,J\eta ]  \\
   &=& \,\overdiamond{D}_{h\zeta }J\eta -v[h\zeta ,J\eta ] \\
   &=&0.
\end{eqnarray*}
Hence the proof is completed.
\end{proof}

\begin{thm} \label{hashi.nab.} The Hashiguchi connection $\,\,\overdiamond{D}$ is uniquely determined by the following relations:
\begin{description}
  \item[(a)] $\,\,\overdiamond{D}_{J\zeta }J\eta =\overcirc{D}_{J\zeta }J\eta +{\mathcal{C}}(\zeta ,\eta ).$
  \item[(b)] $\,\,\overdiamond{D}_{h\zeta }J\eta =\overcirc{D}_{h\zeta }J\eta .$
  \item[(c)] $\,\,\overdiamond{D} F=0.$
\end{description}
\end{thm}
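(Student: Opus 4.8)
The plan is to read off this reformulation directly from the three identities established in the course of proving Theorem \ref{hashiconnc.}, namely equations (\ref{dhashi.f=0}), (\ref{dhashi.jj}) and (\ref{dhashi.hj}), and to recognize their right-hand sides in terms of the Berwald connection. Concretely, I would place the explicit Berwald formulae (\ref{berwaldconn.}), $\overcirc{D}_{J\zeta}J\eta = J[J\zeta,\eta]$ and $\overcirc{D}_{h\zeta}J\eta = v[h\zeta,J\eta]$, next to the Hashiguchi identities (\ref{dhashi.jj}) and (\ref{dhashi.hj}). Substituting the first into (\ref{dhashi.jj}) gives $\overdiamond{D}_{J\zeta}J\eta = \overcirc{D}_{J\zeta}J\eta + \mathcal{C}(\zeta,\eta)$, which is assertion (a); substituting the second into (\ref{dhashi.hj}) gives $\overdiamond{D}_{h\zeta}J\eta = \overcirc{D}_{h\zeta}J\eta$, which is assertion (b); and (c) is literally (\ref{dhashi.f=0}). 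Thus the content of the statement requires no new computation, only the observation that the correction of the Hashiguchi connection over the Berwald connection is concentrated in the single term $\mathcal{C}(\zeta,\eta)$ appearing in the $J\zeta$-direction.

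For the uniqueness half, I would argue that the three relations (a)--(c) suffice to recover $\overdiamond{D}_\zeta\eta$ for arbitrary $\zeta,\eta\in\cppp$. First decompose the direction $\zeta = h\zeta + v\zeta$; since $v = JF$, the vertical part has the form $J(F\zeta)$, so every direction is covered by the two cases $\overdiamond{D}_{h(\cdot)}$ and $\overdiamond{D}_{J(\cdot)}$ treated in (a) and (b). Next decompose the argument $\eta = h\eta + v\eta = F(J\eta) + J(F\eta)$, using $h = FJ$ and $v = JF$. The vertical piece $J(F\eta)$ is handled by (a)/(b) directly, while the horizontal piece $F(J\eta)$ is handled by pulling $F$ through the connection via (c): $\overdiamond{D}_\zeta\big(F(J\eta)\big) = F\big(\overdiamond{D}_\zeta J\eta\big)$, which is again fixed by (a)/(b). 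Hence (a)--(c) determine $\overdiamond{D}$ completely, and since they are satisfied by the Hashiguchi connection of Theorem \ref{hashiconnc.}, they characterize it.

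The only point needing a little care -- and the nearest thing to an obstacle here -- is the bookkeeping of the projector identities $FJ = h$, $Fh = -J$, $JF = v$ when pulling $F$ out in the uniqueness step, so that $\overdiamond{D}F = 0$ genuinely reduces the horizontal-argument case to the vertical one with the correct signs. One must be consistent with the conventions recorded before Definition \ref{Finsler}. Everything else is an immediate transcription of formulae already proved, so I expect the write-up to be short.
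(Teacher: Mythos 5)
Your proposal is correct and takes essentially the same route as the paper: the theorem is obtained by placing the identities (\ref{dhashi.f=0}), (\ref{dhashi.jj}) and (\ref{dhashi.hj}) from the proof of Theorem \ref{hashiconnc.} next to the Berwald formulae (\ref{berwaldconn.}), and uniqueness is exactly the paper's remark that those three relations determine $\,\overdiamond{D}_{\zeta}\eta$ completely. Your explicit decomposition of $\zeta$ and $\eta$ via $h=FJ$, $v=JF$ and pulling $F$ through with $\,\overdiamond{D}F=0$ just spells out the bookkeeping the paper leaves implicit (and in fact uses in its existence computations), so no new idea is involved.
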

Now, we have the enough information needed to find explicit expressions for  the attached torsion and curvature tensors to   Hashiguchi connection.
\begin{lem}For the Hashiguchi connection \,$\overdiamond{D}$, the (h)h-torsion $\,\overdiamond{T}(h\zeta ,h\xi )$,  and the  (h)v-torsion $\,\overdiamond{T}(h\zeta ,J\xi )$ are calculated as follows:
\begin{description}
  \item[(a)]$\,\overdiamond{T}(h\zeta ,h\xi )=\mathfrak{R}(\zeta ,\xi ).$
  \item[(b)]$\,\overdiamond{T}(h\zeta ,J\xi )=-F\mathcal{C}(\zeta ,\xi ).$
\end{description}
\end{lem}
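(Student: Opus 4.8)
The plan is to reduce both torsion computations to the corresponding quantities for the Berwald connection $\overcirc{D}$, using the comparison formulae of Theorem \ref{hashi.nab.} together with $\overdiamond{D}F=0$ and the relation $h=FJ$. Throughout I would write the classical torsion as $\overdiamond{T}(X,Y)=\overdiamond{D}_XY-\overdiamond{D}_YX-[X,Y]$, and I recall that $\overcirc{D}F=0$ as well (from \eqref{berwaldconn.}), so that $F$ commutes past either covariant differentiation because $\overdiamond{D}_X(FY)=F\overdiamond{D}_XY$ and likewise for $\overcirc{D}$.

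For part (a), the first step is to observe that the horizontal--horizontal part of $\overdiamond{D}$ coincides with that of $\overcirc{D}$. Indeed, writing $h\xi=FJ\xi$ and using $\overdiamond{D}F=0$ followed by Theorem \ref{hashi.nab.}(b), one gets $\overdiamond{D}_{h\zeta}h\xi=F\overdiamond{D}_{h\zeta}J\xi=F\overcirc{D}_{h\zeta}J\xi=\overcirc{D}_{h\zeta}h\xi$. Since the bracket term $[h\zeta,h\xi]$ is common to both torsions, it follows immediately that $\overdiamond{T}(h\zeta,h\xi)=\overcirc{T}(h\zeta,h\xi)$, and the latter equals $\mathfrak{R}(\zeta,\xi)$ by the Berwald lemma recorded after \eqref{berwaldconn.}. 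This disposes of (a) with essentially no computation.

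For part (b), I would again compare with $\overcirc{D}$ slot by slot. The first slot is unchanged, $\overdiamond{D}_{h\zeta}J\xi=\overcirc{D}_{h\zeta}J\xi$ by Theorem \ref{hashi.nab.}(b). For the second slot I use $h\zeta=FJ\zeta$, $\overdiamond{D}F=0$, and Theorem \ref{hashi.nab.}(a) to obtain $\overdiamond{D}_{J\xi}h\zeta=F\overdiamond{D}_{J\xi}J\zeta=F\overcirc{D}_{J\xi}J\zeta+F\mathcal{C}(\xi,\zeta)=\overcirc{D}_{J\xi}h\zeta+F\mathcal{C}(\xi,\zeta)$. Subtracting and recalling the shared bracket term yields $\overdiamond{T}(h\zeta,J\xi)=\overcirc{T}(h\zeta,J\xi)-F\mathcal{C}(\xi,\zeta)$. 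Now the $(h)v$-torsion of the Berwald connection vanishes: property (e) of Theorem \ref{Th:Berwald_connec.} gives $\overcirc{T}(J\zeta,\xi)=0$ for all $\xi$, hence $\overcirc{T}(h\zeta,J\xi)=0$ by antisymmetry of the torsion. Combining this with the symmetry of $\mathcal{C}$ (so that $\mathcal{C}(\xi,\zeta)=\mathcal{C}(\zeta,\xi)$) gives $\overdiamond{T}(h\zeta,J\xi)=-F\mathcal{C}(\zeta,\xi)$, as claimed.

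The only delicate points, and where I would take extra care, are the repeated use of $h=FJ$ to convert horizontal arguments into $F$ applied to vertical ones, the legitimacy of pulling $F$ through $\overdiamond{D}$ and $\overcirc{D}$ (guaranteed by $\overdiamond{D}F=\overcirc{D}F=0$), and the correct bookkeeping of signs and of the symmetry/antisymmetry invoked in the final step. None of these is a genuine obstacle, so I expect the proof to be short once the comparison with the Berwald connection is set up; the substantive content is entirely borrowed from the already-established vanishing of the Berwald $(h)v$-torsion and the identity $\overcirc{T}(h\zeta,h\xi)=\mathfrak{R}(\zeta,\xi)$.
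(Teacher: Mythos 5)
Your proof is correct, but it takes a genuinely different route from the paper's. The paper proves both identities by direct bracket computation: for (a) it substitutes the explicit formula $\overdiamond{D}_{h\zeta }h\xi =Fv[h\zeta ,J\xi ]$ and then invokes the vanishing of the Barthel torsion $t(\zeta ,\xi )=v[J\zeta ,h\xi ]+v[h\zeta ,J\xi ]-J[h\zeta ,h\xi ]=0$, the identity $hF=Fv$, and $\mathfrak{R}(\zeta ,\xi )=-v[h\zeta ,h\xi ]$, in effect re-deriving the Berwald identity $\overcirc{T}(h\zeta ,h\xi )=\mathfrak{R}(\zeta ,\xi )$ from scratch; for (b) it expands $\overdiamond{D}_{J\xi }h\zeta =h[J\xi ,\zeta ]+F\mathcal{C}(\xi ,\zeta )$ and cancels everything using $h[J\xi ,v\zeta ]=0$ and the decomposition $[h\zeta ,J\xi ]=h[h\zeta ,J\xi ]+v[h\zeta ,J\xi ]$. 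You instead argue by slotwise comparison with the Berwald connection: using $h=FJ$ together with $\overdiamond{D}F=\overcirc{D}F=0$ and Theorem \ref{hashi.nab.}, you show $\overdiamond{D}_{h\zeta }h\xi =\overcirc{D}_{h\zeta }h\xi $ and $\overdiamond{D}_{J\xi }h\zeta =\overcirc{D}_{J\xi }h\zeta +F\mathcal{C}(\xi ,\zeta )$, so the torsions differ by exactly $0$ and $-F\mathcal{C}(\xi ,\zeta )$ respectively; then (a) follows from the recorded Berwald lemma $\overcirc{T}(h\zeta ,h\xi )=\mathfrak{R}(\zeta ,\xi )$, and (b) from axiom (e) of Theorem \ref{Th:Berwald_connec.} (via antisymmetry of the torsion, $\overcirc{T}(h\zeta ,J\xi )=0$) plus the symmetry of $\mathcal{C}$. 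Your version is shorter and more modular: it isolates precisely where the first Cartan tensor enters (the $J$-slot of the second argument) and avoids repeating bracket manipulations, at the price of delegating the substantive computation to previously recorded Berwald facts, one of which the paper states without proof. The paper's version is self-contained at the level of the Barthel structure equations ($t=0$, $\mathfrak{R}=-v[h,h]$), which is why it is longer. All the ingredients you use ($FJ=h$, $\overcirc{D}F=0$ from \eqref{berwaldconn.}, symmetry of $\mathcal{C}$) are established in the paper, so there is no gap.
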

\begin{proof}
~\par
\noindent \textbf{(a)} Making use of Theorem \ref{hashi.nab.},
 and the vanishing property  of the torsion $t(\zeta ,\xi )$ of the connection  $\Gamma$, that is, $0=t(\zeta ,\xi )=v[J\zeta ,h\xi ]+v[h\zeta ,J\xi ]-J[h\zeta ,h\xi ]$ together with the fact that  $hF=Fv$ , we have
\begin{eqnarray*}
% \nonumber to remove numbering (before each equation)
  \overdiamond{T}(h\zeta ,h\xi ) &=& \,\,\overdiamond{D}_{h\zeta }h\xi -\,\,\overdiamond{D}_{h\xi }h\zeta -[h\zeta ,h\xi ] \\
   &=& Fv[h\zeta ,J\xi ]- Fv[h\xi ,J\zeta ]-[h\zeta ,h\xi ]\\
   &=&  FJ[h\zeta ,h\xi ]- Fv[J\zeta ,h\xi ]-hF[h\xi ,J\zeta ]-[h\zeta ,h\xi ]\\
    &=&  h[h\zeta ,h\xi ]-[h\zeta ,h\xi ]\\
    &=&  -v[h\zeta ,h\xi ]\\
     &=&\mathfrak{R}(\zeta ,\xi ).
\end{eqnarray*}

\noindent \textbf{(b)} By Theorem \ref{hashi.nab.} and using that the property $h[J\zeta ,v\xi ]=0$, we get
\begin{eqnarray*}
% \nonumber to remove numbering (before each equation)
  \overdiamond{T}(h\zeta ,J\xi ) &=& \,\,\overdiamond{D}_{h\zeta }J\xi -\,\,\overdiamond{D}_{J\xi }h\zeta -[h\zeta ,J\xi ] \\
   &=& v[h\zeta ,J\xi ]- h[J\xi ,\zeta ]-F\mathcal{C}(\xi ,\zeta )-[h\zeta ,J\xi ]\\
   &=& v[h\zeta ,J\xi ]- h[J\xi ,h\zeta ]-h[h\zeta ,J\xi ]-v[h\zeta ,J\xi ]\\
    &=&-F\mathcal{C}(\zeta ,\xi ).
    \end{eqnarray*}
\end{proof}
\begin{prop}\label{hashicurv.} For     Hashiguchi connection, the h-curvature \, $\overdiamond{R}$,  the hv-curvature \,  $\overdiamond{P}$,  and the v-curvature \, $\overdiamond{Q}$, can be written  in the following formulae:
\begin{description}
  \item[(a)] $\overdiamond{R}(\zeta ,\eta )\xi =\overcirc{R}(\zeta ,\eta )\xi +\mathcal{C}(F\mathfrak{R}(\zeta ,\eta ),\xi )$.

  \item[(b)] $\overdiamond{P}(\zeta ,\eta )\xi =\overcirc{P}(\zeta ,\eta )\xi +(\,\,\overdiamond{D}_{h\zeta }\mathcal{C})(\eta ,\xi )$.

  \item[(c)] $\overdiamond{Q}(\zeta ,\eta )\xi ={Q}(\zeta ,\eta )\xi ={\mathcal{C}}(F{\mathcal{C}}(\zeta ,\xi ),\eta )-{\mathcal{C}}(F{\mathcal{C}}(\eta ,\xi ),\zeta )$.
\end{description}
\end{prop}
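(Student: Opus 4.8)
The plan is to view the Hashiguchi connection as a tensorial modification of the Berwald connection and to feed the difference through the general curvature-comparison identity. By Theorem~\ref{hashi.nab.}, the $(1,2)$-tensor $A(\zeta,\eta):=\,\overdiamond{D}_{\zeta}\eta-\overcirc{D}_{\zeta}\eta$ satisfies $A(J\zeta,J\eta)=\mathcal{C}(\zeta,\eta)$ and $A(h\zeta,J\eta)=0$; since both connections kill $F$ (by \eqref{dhashi.f=0} and \eqref{berwaldconn.}), $A$ is $F$-linear in its second slot, whence $A(h\zeta,h\eta)=0$ and $A(J\zeta,h\eta)=F\mathcal{C}(\zeta,\eta)$. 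First I would isolate the two facts that control every computation: $A(h\zeta,\cdot)=0$ (the difference vanishes in horizontal directions), and, for any vertical field $W$, the identity $W=JFW$ (from $JF=v$ in \eqref{JJ}) gives $A(W,J\xi)=\mathcal{C}(FW,\xi)$. I would also note that $\,\overdiamond{D}$ and $\overcirc{D}$ commute with $F$, so their curvatures are $F$-equivariant and it suffices to evaluate on vertical arguments $J\xi$.

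Next I would record, for $\,\overdiamond{D}=\overcirc{D}+A$, the identity
\begin{equation*}
\,\overdiamond{R}(\zeta,\eta)\xi=\overcirc{R}(\zeta,\eta)\xi+(\overcirc{D}_{\zeta}A)(\eta,\xi)-(\overcirc{D}_{\eta}A)(\zeta,\xi)+A(\overcirc{T}(\zeta,\eta),\xi)+A(\zeta,A(\eta,\xi))-A(\eta,A(\zeta,\xi)),
\end{equation*}
the Berwald torsion $\overcirc{T}$ arising from $\overcirc{D}_{\zeta}\eta-\overcirc{D}_{\eta}\zeta=[\zeta,\eta]+\overcirc{T}(\zeta,\eta)$. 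For (a) I insert $h\zeta,h\eta$: because $\overcirc{D}$ preserves $H(TM)$, the field $\overcirc{D}_{h\zeta}h\eta$ is horizontal, so both covariant-derivative terms and both quadratic terms die by $A(h\zeta,\cdot)=0$, leaving only $A(\overcirc{T}(h\zeta,h\eta),\xi)$. Using $\overcirc{T}(h\zeta,h\eta)=\mathfrak{R}(\zeta,\eta)$ (the lemma following Theorem~\ref{Th:Berwald_connec.}) together with $A(W,J\xi)=\mathcal{C}(FW,\xi)$ yields $\mathcal{C}(F\mathfrak{R}(\zeta,\eta),\xi)$, which is (a).

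For (b) I insert $h\zeta,J\eta$. The torsion term drops since $\overcirc{T}(h\zeta,J\eta)=-\overcirc{T}(J\eta,h\zeta)=0$ by Theorem~\ref{Th:Berwald_connec.}(e); the two quadratic terms and $(\overcirc{D}_{J\eta}A)(h\zeta,\xi)$ vanish by $A(h\zeta,\cdot)=0$ and $\overcirc{D}_{J\eta}h\zeta\in H(TM)$. Using $\overcirc{D}J=0$ one checks $(\overcirc{D}_{h\zeta}A)(J\eta,\xi)=(\overcirc{D}_{h\zeta}\mathcal{C})(\eta,\xi)$, and since $\,\overdiamond{D}$ and $\overcirc{D}$ agree on vertical targets in horizontal directions (Theorem~\ref{hashi.nab.}(b) and $A(h\zeta,\cdot)=0$), this equals $(\,\overdiamond{D}_{h\zeta}\mathcal{C})(\eta,\xi)$, giving (b). For (c) I would bypass the comparison identity: the $v$-curvature is built solely from the vertical rule $D_{J\zeta}J\eta$, because the inner derivative, the bracket $[J\zeta,J\eta]$ and the acted-on field are all vertical (by \eqref{Nk} and \eqref{JJ}); since the Hashiguchi vertical rule of Theorem~\ref{hashi.nab.}(a) coincides with the Cartan one of Theorem~\ref{Th:Cartan_connec.}, we get $\,\overdiamond{Q}=Q$, and Lemma~\ref{car.r,p,q}(c) supplies the formula.

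The main obstacle is pinning down the difference tensor $A$ exactly — in particular the $F$-factors forced by $\,\overdiamond{D}F=\overcirc{D}F=0$ and the identity $A(W,J\xi)=\mathcal{C}(FW,\xi)$ for vertical $W$ — since this is precisely what manufactures the $F\mathfrak{R}$ appearing in (a) and governs each surviving term. Once $A$ is understood, the three cases reduce to a disciplined pruning of terms using $A(h\zeta,\cdot)=0$ and the Berwald torsion identities $\overcirc{T}(h\zeta,h\eta)=\mathfrak{R}(\zeta,\eta)$ and $\overcirc{T}(h\zeta,J\eta)=0$; the only genuinely nontrivial bookkeeping is in (b), matching $(\overcirc{D}_{h\zeta}A)(J\eta,\xi)$ with the tensorial derivative $(\,\overdiamond{D}_{h\zeta}\mathcal{C})(\eta,\xi)$.
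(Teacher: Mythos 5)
Your proposal is correct, and it proves all three formulas, but it organizes the argument differently from the paper. The paper proves (a) and (b) by brute-force expansion of the classical curvature $K(h\zeta,h\eta)J\xi$, resp.\ $K(h\zeta,J\eta)J\xi$, substituting the explicit rules of Theorem \ref{hashi.nab.}; its key manoeuvre is that in the term $\overcirc{D}_{[h\zeta,h\eta]}J\xi-\,\overdiamond{D}_{[h\zeta,h\eta]}J\xi$ only the vertical part $v[h\zeta,h\eta]=-\mathfrak{R}(\zeta,\eta)$ of the bracket survives (the two connections agree in horizontal directions), which after writing that vertical part as $JF[h\zeta,h\eta]$ produces $\mathcal{C}(F\mathfrak{R}(\zeta,\eta),\xi)$. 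You instead package the whole comparison into the difference tensor $A=\,\overdiamond{D}-\overcirc{D}$, pin it down on all slot combinations --- in particular $A(h\zeta,\cdot)=0$ and $A(W,J\xi)=\mathcal{C}(FW,\xi)$ for vertical $W$, both forced by $\,\overdiamond{D}F=\overcirc{D}F=0$ --- and feed it through the general two-connection curvature identity; the term $\mathcal{C}(F\mathfrak{R}(\zeta,\eta),\xi)$ then drops out of $A(\overcirc{T}(h\zeta,h\eta),J\xi)$ via the Berwald torsion identity $\overcirc{T}(h\zeta,h\eta)=\mathfrak{R}(\zeta,\eta)$, where the paper obtains it from the bracket term. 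I verified your pruning: in (a) every covariant-derivative and quadratic term dies because $\overcirc{D}$ preserves both subbundles and $A$ kills horizontal first arguments; in (b) the matching $(\overcirc{D}_{h\zeta}A)(J\eta,J\xi)=(\,\overdiamond{D}_{h\zeta}\mathcal{C})(\eta,\xi)$ is exactly right, using $F\overcirc{D}_{h\zeta}J\eta=h\overcirc{D}_{h\zeta}\eta$ (from $\overcirc{D}F=0$), the semi-basicity of $\mathcal{C}$, and the coincidence of the operators $\overcirc{D}_{h\zeta}=\,\overdiamond{D}_{h\zeta}$ --- this is precisely the combination $\overcirc{D}_{h\zeta}\mathcal{C}(\eta,\xi)-\mathcal{C}(\overcirc{D}_{h\zeta}\eta,\xi)-\mathcal{C}(\eta,\overcirc{D}_{h\zeta}\xi)$ that the paper assembles by hand (its printed computation for (b) even carries typos, $[h\zeta,h\eta]$ where $[h\zeta,J\eta]$ is meant, which your tensorial bookkeeping sidesteps). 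For (c) your argument coincides with the paper's: all directions and targets in $K(J\zeta,J\eta)J\xi$ are vertical, and the Hashiguchi and Cartan vertical rules agree, so $\,\overdiamond{Q}=Q$. What your route buys is uniformity across (a) and (b), explicit sign control, and the record $A(J\zeta,h\eta)=F\mathcal{C}(\zeta,\eta)$, which the paper never states; the cost is the one-time verification that $A$ is tensorial and of the comparison identity. Two cosmetic points only: the surviving torsion term should be written $A(\overcirc{T}(h\zeta,h\eta),J\xi)$ (with $J\xi$, as you in fact use), and your check in (b) rests on $\overcirc{D}F=0$ and semi-basicity of $\mathcal{C}$ rather than on $\overcirc{D}J=0$ alone.
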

\begin{proof}
~\par
\noindent \textbf{(a)} By the definition of the classical curvature $K$, the properties of $F$ and the fact that $\,\,\overdiamond{D}_{h\zeta }J\eta =\,\overcirc{D}_{h\zeta }J\eta $, we have
\begin{eqnarray*}
% \nonumber to remove numbering (before each equation)
  \overdiamond{R}(\zeta ,\eta )\xi  &=&K(h\zeta ,h\eta )J\xi   \\
   &=& \,\,\overdiamond{D}_{h\zeta }\,\,\overdiamond{D}_{h\eta }J\xi -\,\,\overdiamond{D}_{h\eta }\,\,\overdiamond{D}_{h\zeta }J\xi -\,\,\overdiamond{D}_{[h\zeta ,h\eta ]}J\xi  \\
   &=& \,\overcirc{D}_{h\zeta }\,\overcirc{D}_{h\eta }J\xi -\overcirc{D}_{h\eta }\,\overcirc{D}_{h\zeta }J\xi -\,\,\overdiamond{D}_{[h\zeta ,h\eta ]}J\xi  \\
   &=& \,\overcirc{R}(\zeta ,\eta )\xi +\,\overcirc{D}_{[h\zeta ,h\eta ]}J\xi -\,\,\overdiamond{D}_{[h\zeta ,h\eta ]}J\xi  \\
   &=& \,\overcirc{R}(\zeta ,\eta )\xi +\,\overcirc{D}_{JF[h\zeta ,h\eta ]}J\xi -\,\,\overdiamond{D}_{JF[h\zeta ,h\eta ]}J\xi  \\
   &=&\,\overcirc{R}(\zeta ,\eta )\xi -\mathcal{C}(F[h\zeta ,h\eta ],\xi )\\
   &=&\,\overcirc{R}(\zeta ,\eta )\xi +\mathcal{C}(F\mathfrak{R}(\zeta ,\eta ),\xi ).
\end{eqnarray*}
\noindent \textbf{(b)} By  Theorem \ref{hashi.nab.} and the fact that  $\,\,\overdiamond{D}_{h\zeta }J\eta =\,\overcirc{D}_{h\zeta }J\eta $, we have
\begin{eqnarray*}
% \nonumber to remove numbering (before each equation)
  \overdiamond{P}(\zeta ,\eta )\xi  &=&K(h\zeta ,J\eta )J\xi   \\
   &=& \,\,\overdiamond{D}_{h\zeta }\,\,\overdiamond{D}_{J\eta }J\xi -\,\,\overdiamond{D}_{J\eta }\,\,\overdiamond{D}_{h\zeta }J\xi -\,\overdiamond{D}_{[h\zeta ,J\eta ]}J\xi  \\
   &=& \,\overcirc{D}_{h\zeta }(\,\overcirc{D}_{J\eta }J\xi +\mathcal{C}(\eta ,\xi ))-\overcirc{D}_{J\eta }\,\overcirc{D}_{h\zeta }J\xi -\mathcal{C}(\,\overcirc{D}_{h\zeta }\xi ,\eta )
   -\,\overdiamond{D}_{[h\zeta ,h\eta ]}J\xi \\
   &=&\,\overcirc{D}_{h\zeta }\,\overcirc{D}_{J\eta }J\xi -\,\overcirc{D}_{J\eta }\,\overcirc{D}_{h\zeta }J\xi -\,\,\overcirc{D}_{[h\zeta ,h\eta ]}J\xi
   +\, \overcirc{D}_{h\zeta }\mathcal{C}(\eta ,\xi )\\&&-\mathcal{C}(\,\overcirc{D}_{h\zeta }\xi ,\eta )- \mathcal{C}(F[h\zeta ,h\eta ],\xi )\\
   &=& \overcirc{P}(\zeta ,\eta )\xi +\, \overcirc{D}_{h\zeta }\mathcal{C}(\eta ,\xi )-\mathcal{C}(\,\overcirc{D}_{h\zeta }\eta ,\xi )-\mathcal{C}(\eta ,\,\overcirc{D}_{h\zeta }\xi ) \\
   &=&\overcirc{P}(\zeta ,\eta )\xi +(\,\overdiamond{D}_{h\zeta }\mathcal{C})(\eta ,\xi ).
\end{eqnarray*}
\noindent \textbf{(c)} By  Theorem \ref{hashi.nab.}  and the properties $\,\,\overdiamond{D}_{J\zeta }J\eta ={D}_{J\zeta }J\eta $, $hv=0$,  we get
\begin{eqnarray*}
% \nonumber to remove numbering (before each equation)
  \overdiamond{Q}(\zeta ,\eta )\xi  &=&K(J\zeta ,J\eta )J\xi   \\
   &=& \,\,\overdiamond{D}_{J\zeta }\,\,\overdiamond{D}_{J\eta }J\xi -\,\,\overdiamond{D}_{J\eta }\,\,\overdiamond{D}_{J\zeta }J\xi -\,\,\overdiamond{D}_{[J\zeta ,J\eta ]}J\xi  \\
    &=& {D}_{J\zeta }{D}_{J\eta }J\xi -{D}_{J\eta }{D}_{J\zeta }J\xi -{D}_{[J\zeta ,J\eta ]}J\xi  \\
    &=&  {Q}(\zeta ,\eta )\xi \\
    &=& {\mathcal{C}}(F{\mathcal{C}}(\zeta ,\xi ),\eta )-{\mathcal{C}}(F{\mathcal{C}}(\eta ,\xi ),\zeta ).
\end{eqnarray*}
\end{proof}

\begin{prop}\label{R,P,S hashi}
The curvatures \, $\overdiamond{R}$,  and   \, $\overdiamond{P}$  of  Hashiguchi connection satisfy the  properties:
\begin{description}
  \item[(a)] $\overdiamond{R}(\eta ,\xi )S=\mathfrak{R}(\eta ,\xi ).$
  \item[(b)] $\overdiamond{P}(\eta ,\xi )S=\overdiamond{P}(\eta ,S)\xi =\overdiamond{P}(S,\eta )\xi =0.$
  \item[(c)] $\overdiamond{Q}(\eta ,\xi )S=\overdiamond{Q}(\eta ,S)\xi =\overdiamond{Q}(S,\eta )\xi =0.$
\end{description}
\end{prop}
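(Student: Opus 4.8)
The plan is to read each identity off the explicit curvature formulas of Proposition \ref{hashicurv.}, substitute the canonical spray $S$ into the appropriate argument, and exploit two structural facts. First, the identity \eqref{c(s)} together with the symmetry of $\mathcal{C}$ gives $\mathcal{C}(\cdot,S)=\mathcal{C}(S,\cdot)=0$. Second, since $\,\overdiamond{D}$ is a lift we have $\,\overdiamond{D}C=v$ and $JS=C$; because every curvature of $\,\overdiamond{D}$ acts through $J$ on its last argument, feeding $S$ into that slot turns it into $JS=C$, and the relation $\,\overdiamond{D}C=v$ collapses those terms.

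I would dispatch (c) and the ``last--slot'' identities first. For (c), Proposition \ref{hashicurv.}(c) gives $\,\overdiamond{Q}=Q$, so the three vanishings are exactly Lemma \ref{car.curv.}(d); equivalently they follow from $\,\overdiamond{Q}(\zeta,\eta)\xi={\mathcal{C}}(F{\mathcal{C}}(\zeta,\xi),\eta)-{\mathcal{C}}(F{\mathcal{C}}(\eta,\xi),\zeta)$ and $\mathcal{C}(\cdot,S)=\mathcal{C}(S,\cdot)=0$. For (a), I would compute directly: $\,\overdiamond{R}(\eta,\xi)S=K(h\eta,h\xi)JS=K(h\eta,h\xi)C$. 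Since $\,\overdiamond{D}C=v$ and $vh=0$, one has $\,\overdiamond{D}_{h\xi}C=\,\overdiamond{D}_{h\eta}C=0$, so only $-\,\overdiamond{D}_{[h\eta,h\xi]}C=-v[h\eta,h\xi]=\mathfrak{R}(\eta,\xi)$ survives; one may equally use Proposition \ref{hashicurv.}(a), the correction $\mathcal{C}(F\mathfrak{R}(\eta,\xi),S)$ vanishing by \eqref{c(s)}.

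For (b), the ``last--slot'' and ``vertical--slot'' cases are clean. For $\,\overdiamond{P}(\eta,\xi)S=K(h\eta,J\xi)C$, use $\,\overdiamond{D}_{J\xi}C=vJ\xi=J\xi$, $\,\overdiamond{D}_{h\eta}C=0$ and $\,\overdiamond{D}_{h\eta}J\xi=v[h\eta,J\xi]$ from Theorem \ref{hashi.nab.}(b); the surviving terms cancel against $\,\overdiamond{D}_{[h\eta,J\xi]}C=v[h\eta,J\xi]$, giving $0$. For $\,\overdiamond{P}(\eta,S)\xi$ I would use Proposition \ref{hashicurv.}(b): the Berwald term $\,\overcirc{P}(\eta,S)\xi$ vanishes (which I would check from homogeneity of the Berwald connection, the spray contracting to zero in the vertical slot), while the correction $(\,\overdiamond{D}_{h\eta}\mathcal{C})(S,\xi)$ reduces by \eqref{c(s)} to $-\mathcal{C}(\,\overdiamond{D}_{h\eta}S,\xi)$. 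Here I would first record the auxiliary lemma $\,\overdiamond{D}_{h\eta}S=0$: from $\,\overdiamond{D}h=0$ and $hS=S$ the field $\,\overdiamond{D}_{h\eta}S$ is horizontal, whereas $J\,\overdiamond{D}_{h\eta}S=\,\overdiamond{D}_{h\eta}(JS)=\,\overdiamond{D}_{h\eta}C=vh\eta=0$ shows it is vertical, hence zero. Thus $\,\overdiamond{P}(\eta,S)\xi=0$.

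The step I expect to be the genuine obstacle is the last equality $\,\overdiamond{P}(S,\eta)\xi=0$, where $S$ sits in the horizontal slot. By Proposition \ref{hashicurv.}(b) this equals $\,\overcirc{P}(S,\eta)\xi+(\,\overdiamond{D}_{hS}\mathcal{C})(\eta,\xi)$, and although $\,\overcirc{P}(S,\eta)\xi=0$ again by homogeneity, the residual term $(\,\overdiamond{D}_{hS}\mathcal{C})(\eta,\xi)=(\,\overdiamond{D}_{S}\mathcal{C})(\eta,\xi)$ is the horizontal covariant derivative of the first Cartan tensor along the spray --- intrinsically the Landsberg tensor of $(M,E)$. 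This term is \emph{not} controlled by \eqref{c(s)}, since it differentiates $\mathcal{C}$ rather than evaluating it on $S$ (and $\,\overdiamond{D}_{S}S=0$ does not assist). Hence the natural argument only delivers $\,\overdiamond{P}(S,\eta)\xi=(\,\overdiamond{D}_{S}\mathcal{C})(\eta,\xi)$, which vanishes precisely when $(M,E)$ is Landsbergian; establishing the stated vanishing therefore appears to require an extra hypothesis or a reconciliation of conventions, and I would flag this equality as the crux of the proposition.
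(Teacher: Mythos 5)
Your handling of (a), (c), and the first two equalities in (b) is correct and substantively the same as the paper's. The paper obtains (a) and (c) from Proposition \ref{hashicurv.} together with (\ref{c(s)}) and Lemma \ref{car.curv.}; your direct evaluations $\overdiamond{R}(\eta,\xi)S=K(h\eta,h\xi)C=-v[h\eta,h\xi]=\mathfrak{R}(\eta,\xi)$ and $\overdiamond{P}(\eta,\xi)S=K(h\eta,J\xi)C=0$, driven by $\overdiamond{D}C=v$, are equivalent and if anything more self-contained. Your auxiliary fact $\overdiamond{D}_{h\eta}S=0$ is exactly what the paper uses, in the form $\overdiamond{D}_{h\eta}hS=Fv[h\eta,C]=0$ with $[h\eta,C]$ horizontal (Lemma \ref{chx}); your derivation (the field is horizontal since $\overdiamond{D}h=0$ and vertical since $J\overdiamond{D}_{h\eta}S=\overdiamond{D}_{h\eta}C=vh\eta=0$) is a clean alternative, and it disposes of $\overdiamond{P}(\eta,S)\xi$ the same way in both texts.

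The obstruction you flag at $\overdiamond{P}(S,\eta)\xi$ is genuine, and the paper does not overcome it. Its proof ends with ``hence $\overdiamond{P}(S,\eta)\xi=0$ by the antisymmetric property of $\overdiamond{P}$,'' but $\overdiamond{P}$ has no such antisymmetry: $\overdiamond{P}(S,\eta)\xi=K(hS,J\eta)J\xi$ and $\overdiamond{P}(\eta,S)\xi=K(h\eta,C)J\xi$ involve different pairs of directions, the two first slots entering through $h$ and $J$ respectively; indeed the only symmetry the paper itself establishes is $\overdiamond{P}(\zeta,\kappa)\xi=\overdiamond{P}(\zeta,\xi)\kappa$ in the last two arguments. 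Moreover your identification of the residue can be closed inside the paper's own formalism: setting $\eta=S$ in Lemma \ref{car.r,p,q}(b) and using Lemma \ref{car.curv.}(c), (\ref{c(s)}), $D_{J\kappa}S=FD_{J\kappa}C=FJ\kappa=h\kappa$, and semi-basicity of $\mathcal{C}'$ yields $(D_{S}\mathcal{C})(\kappa,\xi)=-\mathcal{C}'(\kappa,\xi)$; since $D$, $\overdiamond{D}$ and $\overcirc{D}$ all agree in the direction $hS=S$ (because $\mathcal{C}'(S,\cdot)=0$), this gives $\overdiamond{P}(S,\eta)\xi=(\overdiamond{D}_{S}\mathcal{C})(\eta,\xi)=-\mathcal{C}'(\eta,\xi)$, i.e.\ minus the Landsberg tensor, exactly as you predicted. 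The appendix confirms this locally: $y^{j}P^{\star i}_{\,\,hjk}=y^{j}\paa_{k}G^{i}_{hj}-y^{j}C^{i}_{hk\stackrel{\star}|j}=0-C^{i}_{hk|0}$, because $y^{j}G^{i}_{hj}=N^{i}_{h}$ forces $y^{j}\paa_{k}G^{i}_{hj}=0$ (and the Cartan and Hashiguchi horizontal derivatives of $C^{i}_{hk}$ coincide along the spray), whereas in the Cartan case the term $y^{j}\paa_{k}\Gamma^{i}_{hj}=G^{i}_{hk}-\Gamma^{i}_{hk}$ cancels $-C^{i}_{hk|0}$ --- which is why Lemma \ref{car.curv.}(c) is true while its Hashiguchi analogue fails. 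So the third equality in (b) holds precisely when $(M,E)$ is Landsberg; the correct unconditional statement is $\overdiamond{P}(\eta,\xi)S=\overdiamond{P}(\eta,S)\xi=0$ together with $\overdiamond{P}(S,\eta)\xi=-\mathcal{C}'(\eta,\xi)$. Your refusal to certify that equality is not a gap in your argument but a correct diagnosis of an error in the paper's statement and proof.
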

\begin{proof}
~\par
\noindent \textbf{(a)} Follows from (\ref{c(s)}), Lemma \ref{car.curv.} and Proposition \ref{hashicurv.}.

\noindent \textbf{(b)} By making use of  (\ref{c(s)}),  the facts that \,$\overcirc{P}(\eta ,\xi )S=\overcirc{P}(\eta ,S)\xi
=0$ \cite{Nabil.1} and the bracket $[h\eta ,C]$ is horizontal, we get:
\begin{eqnarray*}
% \nonumber to remove numbering (before each equation)
   \overdiamond{P}(\eta ,\xi )S&=&(\,\overdiamond{D}_{h\eta }\mathcal{C})(\xi ,S) \\
   &=&-\mathcal{C}(\xi ,\,\overdiamond{D}_{h\eta }hS) \\
   &=&-\mathcal{C}(\xi ,Fv[h\eta ,C]) \\
   &=&0.
\end{eqnarray*}
And \, $\overdiamond{P}(\eta ,S)\xi =0$ can be proven in a similar manner and hence  $\,\overdiamond{P}(S,\eta )\xi =0$  by the antisymmetric property  of $\,\overdiamond{P}$.

\noindent \textbf{(c)} Follows from (\ref{c(s)}) and Proposition \ref{hashicurv.}.
\end{proof}

To investigate the Bianchi identities for Hashiguchi connection,  the following lemma is required.
\begin{lem}\label{generalbianchi} Consider  a linear  Finsler connection $\textbf{D}$   on $TM$
with the  classical torsion  tensor $\textbf{T}$ and
the classical  curvature tensor $\textbf{K}$. Then, for all
$\zeta ,\eta ,\xi  \in \cppp$, we have the following identities:
\begin{description}
     \item[(a)]$\mathfrak{S}_{\zeta ,\eta ,\xi }\{\textbf{K}(\zeta ,\eta ) \xi \}=\mathfrak{S}_{\zeta ,\eta ,\xi }\{{\textbf{T}(\textbf{T}}(\zeta ,\eta ),\xi )
   +(\textbf{D}_\zeta {T})(\eta ,\xi ) \}$,

\item[(b)]
$\mathfrak{S}_{\zeta ,\eta ,\xi }\{{\textbf{K}}({T}(\zeta ,\eta ),\xi )+(\textbf{D}_\zeta \textbf{K})(\eta ,\xi )\}=0$,
\end{description}
where the notation  $\mathfrak{S}_{\zeta ,\eta ,\xi }$ refers to the  cyclic sum over
$\zeta $,$\eta $ and $\xi $.
\end{lem}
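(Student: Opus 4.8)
The plan is to derive both identities by direct expansion from the defining formulae for the classical torsion $\textbf{T}(\zeta,\eta)=\textbf{D}_\zeta\eta-\textbf{D}_\eta\zeta-[\zeta,\eta]$ and the classical curvature $\textbf{K}(\zeta,\eta)\xi=\textbf{D}_\zeta\textbf{D}_\eta\xi-\textbf{D}_\eta\textbf{D}_\zeta\xi-\textbf{D}_{[\zeta,\eta]}\xi$, the only external ingredients being the Leibniz rule for the induced covariant derivative on tensors and the Jacobi identity $\mathfrak{S}_{\zeta,\eta,\xi}[[\zeta,\eta],\xi]=0$ for the Lie bracket on $\cppp$. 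Since these are the standard first and second Bianchi identities, valid for any linear connection, the argument is essentially computational; the real work lies in the bookkeeping of the cyclic sums.

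For part (a), I would first expand the covariant derivative of the torsion via the Leibniz rule,
\[
(\textbf{D}_\zeta\textbf{T})(\eta,\xi)=\textbf{D}_\zeta\bigl(\textbf{T}(\eta,\xi)\bigr)-\textbf{T}(\textbf{D}_\zeta\eta,\xi)-\textbf{T}(\eta,\textbf{D}_\zeta\xi),
\]
and substitute $\textbf{T}(\eta,\xi)=\textbf{D}_\eta\xi-\textbf{D}_\xi\eta-[\eta,\xi]$ wherever a torsion term appears. Next I would expand $\textbf{T}(\textbf{T}(\zeta,\eta),\xi)$ in the same fashion. Taking the cyclic sum $\mathfrak{S}_{\zeta,\eta,\xi}$ of the two contributions, the second-order terms of the form $\textbf{D}_\zeta\textbf{D}_\eta\xi$ reorganize, after reindexing the cyclic sum, into $\mathfrak{S}_{\zeta,\eta,\xi}\textbf{K}(\zeta,\eta)\xi$ together with the connection terms $\textbf{D}_{[\zeta,\eta]}\xi$, while every bracket-of-bracket contribution collapses by the Jacobi identity; the remaining first-order terms cancel in cyclic pairs. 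This yields identity (a).

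For part (b), I would treat $\textbf{K}$ as an $\operatorname{End}$-valued $2$-form and expand
\[
(\textbf{D}_\zeta\textbf{K})(\eta,\xi)=\textbf{D}_\zeta\circ\textbf{K}(\eta,\xi)-\textbf{K}(\eta,\xi)\circ\textbf{D}_\zeta-\textbf{K}(\textbf{D}_\zeta\eta,\xi)-\textbf{K}(\eta,\textbf{D}_\zeta\xi),
\]
insert the definition of $\textbf{K}(\eta,\xi)$, and then adjoin the term $\textbf{K}(\textbf{T}(\zeta,\eta),\xi)$ after substituting the definition of $\textbf{T}$. Forming the cyclic sum $\mathfrak{S}_{\zeta,\eta,\xi}$, the third-order covariant derivatives $\textbf{D}_\zeta\textbf{D}_\eta\textbf{D}_\xi$ cancel in conjugate pairs, the terms carrying $\textbf{D}_{[\eta,\xi]}$ combine with the $\textbf{K}(\textbf{T}(\cdot,\cdot),\cdot)$ contribution, and the residual double-bracket terms vanish by Jacobi, leaving $0$.

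The main obstacle is purely organizational rather than conceptual: in part (b) one must keep the $\operatorname{End}$-valued Leibniz rule consistent and correctly match the numerous $\textbf{D}_{[\,\cdot\,,\cdot\,]}$ terms generated by the curvature against those generated by the torsion, so that the Jacobi identity can be applied cleanly to eliminate them. Since no geometric input beyond the Jacobi identity and the Leibniz rule for $\textbf{D}$ is required, once the cyclic-sum accounting is correctly arranged both identities follow directly.
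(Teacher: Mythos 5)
Your plan is correct, and it supplies the standard textbook argument: both identities follow by the direct expansion you describe, using only the defining formulae for $\textbf{T}$ and $\textbf{K}$, the Leibniz rule for the induced derivative on (vector- and $\operatorname{End}$-valued) tensors, and the Jacobi identity for the Lie bracket. Note, however, that the paper itself offers no proof of this lemma: it is stated as a known classical fact (the general first and second Bianchi identities, valid for any linear connection with torsion on any manifold --- nothing Finslerian enters), and is then immediately applied with $h$- and $v$-decomposed arguments in Proposition \ref{bianchihashi}. So there is no ``paper route'' to compare against; your computation fills in exactly the omitted standard proof, as found, e.g., in Kobayashi--Nomizu. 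Two points of care in executing it: in (a), the term $\textbf{D}_{\textbf{T}(\zeta,\eta)}\xi$ arising from $\textbf{T}(\textbf{T}(\zeta,\eta),\xi)$ and the terms $-\textbf{T}(\textbf{D}_\zeta\eta,\xi)-\textbf{T}(\eta,\textbf{D}_\zeta\xi)$ from the Leibniz expansion must be matched across \emph{different} summands of the cyclic sum (using the antisymmetry of $\textbf{T}$) before the second-order terms reassemble into $\mathfrak{S}_{\zeta,\eta,\xi}\{\textbf{K}(\zeta,\eta)\xi\}$; and in (b), the third-order derivatives do not cancel within a single summand but only between the $\textbf{D}_\zeta\circ\textbf{K}(\eta,\xi)$ piece of one summand and the $-\textbf{K}(\eta,\xi)\circ\textbf{D}_\zeta$ piece of a cyclically shifted one, after which the leftover $\textbf{D}_{[\,\cdot\,,\cdot\,]}$ terms combine with $-\textbf{K}([\zeta,\eta],\xi)$ (extracted from $\textbf{K}(\textbf{T}(\zeta,\eta),\xi)$ via the definition of $\textbf{T}$) so that only double brackets remain, which vanish by Jacobi. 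This is precisely the bookkeeping you flagged, and once arranged as above the proof closes with no further input.
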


We are now ready to explore the Bianchi identities for the Hashiguchi connection, as detailed in the following proposition.
\begin{prop}\label{bianchihashi}
For for Hashiguchi connection on a Finsler manifold $(M,E)$, then we have the following Bianchi identities:
\begin{description}
  \item[(a)]$\mathfrak{S}_{\zeta ,\eta ,\xi }\{\, \overdiamond{R}(\zeta ,\eta )\xi \}=\mathfrak{S}_{\zeta ,\eta ,\xi }\{\mathcal{C}(F\mathfrak{R}(\zeta ,\eta ),\xi )\}$.

  \item[(b)] $\mathfrak{S}_{\zeta ,\eta ,\xi }\{\, \overdiamond{Q}(\zeta ,\eta )\xi \}=0$.

  \item[(c)] $\mathcal{C}(F\mathfrak{R}(\zeta ,\eta ),\xi )=\mathfrak{R}(F\mathcal{C}(\zeta ,\xi ),\eta )-\mathfrak{R}(F\mathcal{C}(\eta ,\xi ),\zeta )$.

  \item[(d)] $\mathfrak{S}_{\zeta ,\eta ,\xi }\{(\, \overdiamond{D}_{h\zeta }\mathfrak{R})(\eta ,\xi )\}=0$.

  \item[(e)]$\mathfrak{S}_{\zeta ,\eta ,\xi }\,\{(\, \overdiamond{D}_{h\zeta }\, \overdiamond{R})(\eta ,\xi )\}=\mathfrak{S}_{\zeta ,\eta ,\xi }\,\{\, \overdiamond{P}(\zeta ,F\mathfrak{R}(\eta ,\xi ))\}$.

  \item[(f)]$(\, \overdiamond{D}_{h\zeta }\,\overdiamond{P})(\eta ,\xi )-(\, \overdiamond{D}_{h\eta }\, \overdiamond{P})(\zeta ,\xi )+(\, \overdiamond{D}_{J\xi }\, \overdiamond{R})(\zeta ,\eta )=
  \, \overdiamond{R}(F{\mathcal{C}}(\eta ,\xi ),\zeta )\\-\, \overdiamond{R}(F{\mathcal{C}}(\zeta ,\xi ),\eta )-\, \overdiamond{Q}(F\mathfrak{R}(\zeta ,\eta ),\xi )$.

  \item[(g)]$(\, \overdiamond{D}_{h\zeta }\, \overdiamond{Q})(\eta ,\xi )-(\, \overdiamond{D}_{J\eta }\, \overdiamond{P})(\zeta ,\xi )+(\, \overdiamond{D}_{J\xi }\, \overdiamond{P})(\zeta ,\eta )=\, \overdiamond{P}(F{\mathcal{C}}(\zeta ,\eta ),\xi )\\
  -\, \overdiamond{P}(F{\mathcal{C}}(\xi ,\zeta ),\eta )$.

  \item[(h)]$\mathfrak{S}_{\zeta ,\eta ,\xi }\{(\, \overdiamond{D}_{J\zeta }\, \overdiamond{Q})(\eta ,\xi )\}=0$,
\end{description}
\end{prop}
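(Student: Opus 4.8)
The plan is to obtain all eight identities from the two universal Bianchi identities of Lemma~\ref{generalbianchi}, specialised to the Hashiguchi connection $\overdiamond{D}$, by inserting horizontal arguments $h\zeta$ and vertical arguments $J\zeta$ in every admissible combination and then splitting each resulting equation along the decomposition $TTM=H(TM)\oplus V(TM)$. The device that makes this efficient is the compatibility $\overdiamond{D}F=0$ from \eqref{dhashi.f=0}: the classical curvature $K$ and torsion $\overdiamond{T}$ of $\overdiamond{D}$ then commute with $F$, and since $h=FJ$, $-J=Fh$, $v=JF$, one may freely interchange horizontal and vertical slots and project onto the horizontal or vertical summand. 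The only torsion data required are $\overdiamond{T}(J\zeta,J\eta)=0$, $\overdiamond{T}(h\zeta,h\eta)=\mathfrak{R}(\zeta,\eta)$ and $\overdiamond{T}(h\zeta,J\eta)=-F\mathcal{C}(\zeta,\eta)$ from the torsion lemma, together with the explicit curvature formulae of Proposition~\ref{hashicurv.} and the defining relations of Theorem~\ref{hashi.nab.}.

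First I would run the algebraic identity Lemma~\ref{generalbianchi}(a). Inserting the triple $(J\zeta,J\eta,J\xi)$ annihilates every torsion contribution (because $\overdiamond{T}(J\cdot,J\cdot)=0$ and $\overdiamond{D}$ preserves $V(TM)$), leaving $\mathfrak{S}_{\zeta,\eta,\xi}\{\overdiamond{Q}(\zeta,\eta)\xi\}=0$, which is (b). Inserting the triple $(h\zeta,h\eta,h\xi)$, the term $\overdiamond{T}(\overdiamond{T}(h\zeta,h\eta),h\xi)$ evaluates to $F\mathcal{C}(F\mathfrak{R}(\zeta,\eta),\xi)$ (using $\overdiamond{T}(h\cdot,J\cdot)=-F\mathcal{C}$, the symmetry of $\mathcal{C}$, and $\mathfrak{R}=J(F\mathfrak{R})$ since $\mathfrak{R}$ is vertical), while $(\overdiamond{D}_{h\zeta}\overdiamond{T})(h\eta,h\xi)$ collapses to $(\overdiamond{D}_{h\zeta}\mathfrak{R})(\eta,\xi)$ and the left-hand side is $F\,\mathfrak{S}\{\overdiamond{R}(\zeta,\eta)\xi\}$. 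Splitting this one equation into its horizontal and vertical parts yields (a) and (d) at once: the horizontal part reads $F\,\mathfrak{S}\{\overdiamond{R}\}=F\,\mathfrak{S}\{\mathcal{C}(F\mathfrak{R},\cdot)\}$, and cancelling the injective $F$ gives (a), while the vertical part is exactly $\mathfrak{S}\{(\overdiamond{D}_{h\zeta}\mathfrak{R})(\eta,\xi)\}=0$, i.e. (d). The mixed triple $(h\zeta,h\eta,J\xi)$, projected onto $V(TM)$, is then used to produce the algebraic identity (c), after substituting the value of $\overdiamond{R}$ from Proposition~\ref{hashicurv.}(a) and invoking the skew-symmetry of $\mathfrak{R}$ and the symmetry of $\mathcal{C}$.

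For the differential identities I would feed the same four argument patterns into Lemma~\ref{generalbianchi}(b). All-vertical again collapses to $\mathfrak{S}\{(\overdiamond{D}_{J\zeta}\overdiamond{Q})(\eta,\xi)\}=0$, which is (h). All-horizontal produces the curvature term $K(\overdiamond{T}(h\zeta,h\eta),h\xi)=K(\mathfrak{R}(\zeta,\eta),h\xi)$, which by $\mathfrak{R}=J(F\mathfrak{R})$ and the definition of $\overdiamond{P}$ equals $-\overdiamond{P}(\xi,F\mathfrak{R}(\zeta,\eta))$; transposing it and relabelling the cyclic sum gives (e). The two mixed patterns $(h\zeta,h\eta,J\xi)$ and $(h\zeta,J\eta,J\xi)$ give (f) and (g): here the torsion-curvature terms $K(\overdiamond{T}(\cdot,\cdot),\cdot)$ evaluate, via $\overdiamond{T}(h,J)=-F\mathcal{C}$ and $\overdiamond{T}(h,h)=\mathfrak{R}$, to the combinations $\overdiamond{R}(F\mathcal{C},\cdot)$, $\overdiamond{Q}(F\mathfrak{R},\cdot)$ and $\overdiamond{P}(F\mathcal{C},\cdot)$ appearing on the right-hand sides, while the derivative terms reorganise into the stated covariant derivatives of $\overdiamond{R}$, $\overdiamond{P}$ and $\overdiamond{Q}$. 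Throughout, Proposition~\ref{R,P,S hashi} and \eqref{c(s)} (namely $\mathcal{C}(\cdot,S)=0$) are invoked to discard the summands involving the canonical spray $S$.

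The routine part is tensor bookkeeping; two points are genuinely delicate. First, in each specialisation one must carefully decide which summands lie in $H(TM)$ and which in $V(TM)$ and project accordingly — this is what cleanly separates (a) from (d) and is the step most prone to sign errors, since $\overdiamond{T}(h,J)=-F\mathcal{C}$ is horizontal whereas $\overdiamond{T}(h,h)=\mathfrak{R}$ is vertical. Second, the algebraic identity (c) is the hardest: the vertical component of the mixed Bianchi identity carries an additional term $(\overdiamond{D}_{J\xi}\mathfrak{R})(\zeta,\eta)$, and absorbing it forces one to use the explicit Berwald expression for $\overdiamond{R}$ in Proposition~\ref{hashicurv.}(a) together with the relation between the Berwald $h$-curvature and the vertical derivative of $\mathfrak{R}$; reconciling these is the crux of the whole proposition.
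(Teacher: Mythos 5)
Your proposal is correct and takes essentially the same route as the paper: both proofs specialise Lemma \ref{generalbianchi}\textbf{(a)} and \textbf{(b)} to the four argument patterns $(h\zeta,h\eta,h\xi)$, $(h\zeta,h\eta,J\xi)$, $(h\zeta,J\eta,J\xi)$, $(J\zeta,J\eta,J\xi)$ and then split each resulting identity into horizontal and vertical parts using $\,\overdiamond{D}F=0$ and the torsion values $\,\overdiamond{T}(J\zeta,J\eta)=0$, $\,\overdiamond{T}(h\zeta,h\eta)=\mathfrak{R}(\zeta,\eta)$, $\,\overdiamond{T}(h\zeta,J\eta)=-F\mathcal{C}(\zeta,\eta)$. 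If anything, your version is more careful than the paper's: your all-horizontal equation correctly retains the torsion--torsion contribution $F\mathcal{C}(F\mathfrak{R}(\zeta,\eta),\xi)$, which is missing from the paper's displayed equation (\ref{bianchi1}) (whose stated consequence $\mathfrak{S}_{\zeta,\eta,\xi}\{\,\overdiamond{R}(\zeta,\eta)\xi\}=0$ is inconsistent with item \textbf{(a)} as stated), and you correctly flag the extra $(\,\overdiamond{D}_{J\xi}\mathfrak{R})(\zeta,\eta)$ term that must be absorbed via Proposition \ref{hashicurv.}\textbf{(a)} in the proof of \textbf{(c)}, a step the paper passes over in silence.
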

\begin{proof}
~\par
\noindent {\textbf{(a)}} Making use  of   Lemma \ref{generalbianchi} \textbf{(a)} for $h\zeta $, $h\eta $ and $h\xi $, we get
\begin{equation}\label{bianchi1}
    F\mathfrak{S}_{\zeta ,\eta ,\xi }\{\,\overdiamond{R}(\zeta ,\eta ) \xi \}=\mathfrak{S}_{\zeta ,\eta ,\xi }\{(\,\,\overdiamond{D}_{h\zeta }\mathfrak{R})(\eta ,\xi )\}.
\end{equation}
Since $Fv=hF$ and $\,\overdiamond{R}$, $\mathfrak{R}$ are verticals, then by comparing the horizontal parts taking the fact that $F$ is an isomorphism into account,  we have

$$\mathfrak{S}_{\zeta ,\eta ,\xi }\{\,\overdiamond{R}(\zeta ,\eta )\xi \}=0.$$

\noindent {\textbf{(b)}} It follows by employing  Lemma \ref{generalbianchi} \textbf{(a)} on $J\zeta $, $J\eta $ and $J\xi $.

\noindent {\textbf{(c)}} Follows by applying Lemma \ref{generalbianchi} \textbf{(a)} on $h\zeta $, $h\eta $ and $J\xi $ and comparing the vertical parts.

\noindent {\textbf{(d)}} The result follows by
comparing the vertical parts in (\ref{bianchi1}).

\noindent {\textbf{(e)}} By using   Lemma \ref{generalbianchi} \textbf{(b)} on $h\zeta $, $h\eta $ and $h\xi $.

\noindent {\textbf{(f)}} The result comes by implementing   Lemma \ref{generalbianchi} \textbf{(b)} on $h\zeta $, $h\eta $ and $J\xi $.

\noindent {\textbf{(g)}}Follows by considering  Lemma \ref{generalbianchi} \textbf{(b)} on $h\zeta $, $J\eta $ and $J\xi $.

\noindent {\textbf{(h)}} It follows by using  Lemma \ref{generalbianchi} \textbf{(b)} on $J\zeta $, $J\eta $ and $J\xi $.
\end{proof}

\begin{prop} The hv-curvature of the Hashiguchi connection $\,\,\overdiamond{P}$ satisfies that
$$\,\overdiamond{P}(\zeta ,\kappa )\xi =\,\overdiamond{P}(\zeta ,\xi )\kappa .$$
\end{prop}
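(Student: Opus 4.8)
The plan is to reduce the asserted symmetry to the corresponding symmetry of the Berwald hv-curvature, using the decomposition of $\,\overdiamond{P}$ already secured in Proposition \ref{hashicurv.}. By part \textbf{(b)} of that proposition, for all $\zeta,\kappa,\xi\in\cppp$,
$$\overdiamond{P}(\zeta,\kappa)\xi=\overcirc{P}(\zeta,\kappa)\xi+(\,\overdiamond{D}_{h\zeta}\mathcal{C})(\kappa,\xi),$$
so the difference $\overdiamond{P}(\zeta,\kappa)\xi-\overdiamond{P}(\zeta,\xi)\kappa$ splits into a Berwald part $\overcirc{P}(\zeta,\kappa)\xi-\overcirc{P}(\zeta,\xi)\kappa$ and a Cartan-tensor part $(\,\overdiamond{D}_{h\zeta}\mathcal{C})(\kappa,\xi)-(\,\overdiamond{D}_{h\zeta}\mathcal{C})(\xi,\kappa)$. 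I would show that each of these two pieces vanishes on its own.

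First I would dispatch the Cartan-tensor part. Since the first Cartan tensor $\mathcal{C}$ is symmetric, i.e. $\mathcal{C}(\kappa,\xi)=\mathcal{C}(\xi,\kappa)$, its covariant derivative along any direction stays symmetric in the two arguments: expanding by the Leibniz rule,
$$(\,\overdiamond{D}_{h\zeta}\mathcal{C})(\kappa,\xi)=\overdiamond{D}_{h\zeta}(\mathcal{C}(\kappa,\xi))-\mathcal{C}(\overdiamond{D}_{h\zeta}\kappa,\xi)-\mathcal{C}(\kappa,\overdiamond{D}_{h\zeta}\xi),$$
and interchanging $\kappa$ and $\xi$ leaves each term unchanged because every term is symmetric under $\kappa\leftrightarrow\xi$. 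Hence $(\,\overdiamond{D}_{h\zeta}\mathcal{C})(\kappa,\xi)=(\,\overdiamond{D}_{h\zeta}\mathcal{C})(\xi,\kappa)$ and the Cartan-tensor part drops out.

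It then remains to prove the Berwald symmetry $\overcirc{P}(\zeta,\kappa)\xi=\overcirc{P}(\zeta,\xi)\kappa$, the interchange of the vertical argument with the argument on which the curvature acts; this is the main point. The cleanest route is to invoke the known structure of the Berwald hv-curvature (cf. \cite{Nabil.1},\cite{r22}): starting from $\overcirc{P}(\zeta,\eta)\xi=K(h\zeta,J\eta)J\xi$ and using the Berwald formulae (\ref{berwaldconn.}) together with $\overcirc{D}J=0$, the identities (\ref{JJ}) ($J^{2}=0,\ [J,J]=0$) and the vanishing torsion $\overcirc{T}(J\zeta,\xi)=0$, one checks that the Berwald hv-curvature is totally symmetric in its lower arguments, which in particular delivers the desired exchange of $\kappa$ and $\xi$. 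Combining this with the vanishing of the Cartan-tensor part from the previous step then gives $\overdiamond{P}(\zeta,\kappa)\xi=\overdiamond{P}(\zeta,\xi)\kappa$.

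The main obstacle I anticipate is exactly this Berwald symmetry step: verifying it coordinate-free requires carefully tracking the brackets produced by the non-tensorial pieces $v[h\zeta,J\eta]$ and $J[J\zeta,\eta]$ and confirming that the antisymmetric (bracket) contributions cancel under the interchange. Once the interchange of the two vertical-type slots is established, the rest is a one-line combination of the two facts above.
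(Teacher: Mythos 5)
Your proposal is correct and follows essentially the same route as the paper's own proof: both decompose $\,\overdiamond{P}$ via Proposition \ref{hashicurv.} \textbf{(b)}, eliminate the $(\,\overdiamond{D}_{h\zeta }\mathcal{C})(\kappa ,\xi )$ term using the symmetry of the first Cartan tensor $\mathcal{C}$, and settle the remaining piece by the total symmetry of the Berwald hv-curvature $\,\overcirc{P}$, which the paper (like you, ultimately) simply cites from \cite{Nabil.1}. The direct verification of that Berwald symmetry you sketch as a fallback is extra detail the paper does not carry out.
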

\begin{proof}
Since \, $\overcirc{P}$ is totally symmetric \cite{Nabil.1} and the symmetric property of $\mathcal{C}$, then the result follows by Proposition \ref{hashicurv.}.
\end{proof}

\begin{lem}\label{p,q.hashi.s}
The  hv-curvature \, $\overdiamond{P}$ and the v-curvature $\,\overdiamond{Q}$ of the Hashiguchi connection satisfy the following properties:
$$(\,\overdiamond{D}_{h\zeta }\,\overdiamond{P})(\eta ,S)=0, \quad\quad (\,\overdiamond{D}_{J\zeta }\,\overdiamond{P})(\eta ,S)=\,\overdiamond{P}(\eta ,\zeta ),$$
$$ (\,\overdiamond{D}_{h\zeta }\,\overdiamond{Q})(\eta ,S)=0, \quad\quad (\,\overdiamond{D}_{J\zeta }\,\overdiamond{Q})(\eta ,S)=\,\overdiamond{Q}(\eta ,\zeta ).$$
\end{lem}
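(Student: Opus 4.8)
The plan is to expand each of the four covariant derivatives by the Leibniz rule and then to exploit the two vanishing relations $\overdiamond{P}(\eta ,S)=0$ and $\overdiamond{Q}(\eta ,S)=0$ furnished by Proposition \ref{R,P,S hashi}\textbf{(b)} and \textbf{(c)}. Regarding $\overdiamond{P}(\eta ,\xi )$ as an endomorphism bilinear in $\eta ,\xi $ and writing $X$ for either $h\zeta $ or $J\zeta $, the standard expansion of the covariant derivative reads
$$(\overdiamond{D}_{X}\overdiamond{P})(\eta ,S)\,\chi =\overdiamond{D}_{X}\big(\overdiamond{P}(\eta ,S)\chi \big)-\overdiamond{P}(\overdiamond{D}_{X}\eta ,S)\chi -\overdiamond{P}(\eta ,\overdiamond{D}_{X}S)\chi -\overdiamond{P}(\eta ,S)\,\overdiamond{D}_{X}\chi .$$
In the first, second and fourth terms the vector $S$ still occupies the second slot of $\overdiamond{P}$, so each of them is annihilated by $\overdiamond{P}(\cdot ,S)\cdot =0$; only the third term, in which $S$ itself has been differentiated, can survive, and it equals $\overdiamond{P}(\eta ,\overdiamond{D}_{X}S)$ up to sign. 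The identical reduction applies verbatim to $\overdiamond{Q}$ through $\overdiamond{Q}(\cdot ,S)\cdot =0$. Thus everything hinges on computing $\overdiamond{D}_{X}S$.

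First I would evaluate $\overdiamond{D}_{X}S$. Since $S=FC$ and the Hashiguchi connection satisfies $\overdiamond{D}F=0$ (Theorem \ref{hashi.nab.}\textbf{(c)}) together with $\overdiamond{D}C=v$ (Theorem \ref{hashiconnc.}), the Leibniz rule gives $\overdiamond{D}_{X}S=F(\overdiamond{D}_{X}C)=F(vX)$. Hence, using $vh=0$, $vJ=J$ and $FJ=h$,
$$\overdiamond{D}_{h\zeta }S=F(vh\zeta )=0,\qquad \overdiamond{D}_{J\zeta }S=F(vJ\zeta )=F(J\zeta )=h\zeta .$$

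Finally I would substitute these back into the surviving term. For the horizontal directions the survivor is $\overdiamond{P}(\eta ,\overdiamond{D}_{h\zeta }S)=\overdiamond{P}(\eta ,0)=0$, and likewise for $\overdiamond{Q}$, which proves $(\overdiamond{D}_{h\zeta }\overdiamond{P})(\eta ,S)=0$ and $(\overdiamond{D}_{h\zeta }\overdiamond{Q})(\eta ,S)=0$. For the vertical directions $\overdiamond{D}_{J\zeta }S=h\zeta $, and because the second argument of both $\overdiamond{P}$ and $\overdiamond{Q}$ enters only through $J$ with $Jh=J$, the survivor collapses to $\overdiamond{P}(\eta ,h\zeta )=\overdiamond{P}(\eta ,\zeta )$ and $\overdiamond{Q}(\eta ,h\zeta )=\overdiamond{Q}(\eta ,\zeta )$, giving the remaining two identities. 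I expect the only delicate point to be the bookkeeping and sign tracking in the Leibniz expansion: one must check that every contribution other than the one differentiating $S$ keeps $S$ in a $J$-slot and is therefore killed by Proposition \ref{R,P,S hashi}, so that the whole computation reduces to the single term $\overdiamond{P}(\eta ,\overdiamond{D}_{X}S)$ (resp. $\overdiamond{Q}$), after which the identification $Jh=J$ finishes the proof.
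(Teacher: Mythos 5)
Your proposal is correct in substance and follows essentially the same two-step scheme as the paper's own proof: every Leibniz term that keeps $S$ inside a slot of $\,\overdiamond{P}$ or $\,\overdiamond{Q}$ is annihilated by Proposition \ref{R,P,S hashi}, so everything reduces to computing $\,\overdiamond{D}_{X}S$ for $X=h\zeta$ and $X=J\zeta$. Where you differ is in how that derivative is evaluated. You use $S=FC$ together with $\,\overdiamond{D}F=0$ and $\,\overdiamond{D}C=v$ to get $\,\overdiamond{D}_{h\zeta }S=F(vh\zeta )=0$ and $\,\overdiamond{D}_{J\zeta }S=F(J\zeta )=h\zeta $, while the paper recomputes from the explicit bracket formulas of Theorem \ref{hashi.nab.}: $\,\overdiamond{D}_{h\zeta }hS=Fv[h\zeta ,C]=0$ because $[h\zeta ,C]$ is horizontal, and $\,\overdiamond{D}_{J\zeta }hS=h[J\zeta ,S]+F\mathcal{C}(\zeta ,S)=h\zeta $ via Lemma \ref{jx} and (\ref{c(s)}). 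The two computations agree, and yours is marginally more economical; just note that $\,\overdiamond{D}C=v$ is established inside the existence part of the proof of Theorem \ref{hashiconnc.} (as part of showing $\,\overdiamond{D}$ is a lift) rather than in its statement, so the citation should point there.

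There is, however, one point you flagged but never discharged, and it is a genuine one: the sign. In the expansion you yourself display, the surviving term is $-\,\overdiamond{P}(\eta ,\,\overdiamond{D}_{J\zeta }S)\chi $, so the computation as written yields $(\,\overdiamond{D}_{J\zeta }\,\overdiamond{P})(\eta ,S)=-\,\overdiamond{P}(\eta ,\zeta )$ and $(\,\overdiamond{D}_{J\zeta }\,\overdiamond{Q})(\eta ,S)=-\,\overdiamond{Q}(\eta ,\zeta )$, the negatives of the stated identities; saying ``up to sign'' does not resolve this. Nor can careful bookkeeping remove the minus under the standard convention for $\,\overdiamond{D}_{X}$ of a curvature tensor (the convention implicit in Lemma \ref{generalbianchi} and consistent with Proposition \ref{dhashi.C.RPQ}): locally, differentiating the identity $P^{\star i}_{\,\,hjk}y^{k}=0$ vertically and using $C^{i}_{jk}y^{k}=0$ shows that the vertical covariant derivative of $P^{\star}$ contracted with $y^{k}$ equals $-P^{\star i}_{\,\,hjl}$. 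So either the two vertical identities in the Lemma carry a sign typo, or the paper tacitly uses the opposite sign convention; the paper's own proof, which asserts the conclusion without ever writing the Leibniz expansion, never confronts this, whereas your write-up exposes it and then waves it away. The two horizontal identities are unaffected, since there the surviving term is $\,\overdiamond{P}(\eta ,0)=0$ regardless of sign. To make your proof complete, state the convention you adopt for $(\,\overdiamond{D}_{X}\,\overdiamond{P})(\eta ,\kappa )$ and carry the sign through explicitly.
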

\begin{proof}
By  Theorem \ref{hashi.nab.} and since $[h\zeta ,C]$ is horizontal, then $\,\overdiamond{D}_{h\zeta }hS=Fv[h\zeta ,JS]=Fv[h\zeta ,C]=0$. Hence, by  Proposition \ref{R,P,S hashi}, we have $(\,\overdiamond{D}_{h\zeta }\,\overdiamond{P})(\eta ,S)=(\,\overdiamond{D}_{h\zeta }\,\overdiamond{Q})(\eta ,S)=0$. Making use of (\ref{c(s)}) and Lemma \ref{jx}, we get $\,\overdiamond{D}_{J\zeta }hS=h[J\zeta ,S]+F\mathcal{C}(\zeta ,S)=h\zeta $. Thus, $(\,\overdiamond{D}_{J\zeta }\,\overdiamond{P})(\eta ,S)=\,\overdiamond{P}(\eta ,\zeta )$ and $(\,\overdiamond{D}_{J\zeta }\,\overdiamond{Q})(\eta ,S)=\,\overdiamond{Q}(\eta ,\zeta )$.
\end{proof}
\begin{prop}\label{dhashi.C.RPQ} The curvatures  \, $\overdiamond{R}$,    \, $\overdiamond{P}$,  and   $\,\overdiamond{Q}$ of the Hashiguchi connection satisfy the following facts:
$$\,\,\overdiamond{D}_C\,\,\overdiamond{R}=0, \quad\quad \,\,\overdiamond{D}_C\,\,\overdiamond{P}=-\,\overdiamond{P},\quad\quad \,\,\overdiamond{D}_C\,\,\overdiamond{Q}=-2\,\overdiamond{Q}.$$
\end{prop}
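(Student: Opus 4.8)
The plan is to reduce everything to the behaviour of $\overdiamond{D}$ along the Liouville field $C$ acting on horizontal and vertical fields, and then to exploit the homogeneity of the Barthel connection and of the first Cartan tensor. First I would compute $\overdiamond{D}_C\zeta$ for arbitrary $\zeta\in\cppp$. Since $C=JS$ and $\overdiamond{D}$ is a lift of $\Gamma=[J,S]$, we have $\overdiamond{D}C=v$, hence $\overdiamond{D}_\zeta C=v\zeta$; on the other hand the torsion formulas $\overdiamond{T}(J\zeta,J\eta)=0$ and $\overdiamond{T}(h\zeta,J\eta)=-F\mathcal{C}(\zeta,\eta)$ together with $\mathcal{C}(\cdot,S)=0$ (see \eqref{c(s)}) give $\overdiamond{T}(C,\zeta)=0$ for every $\zeta$. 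Feeding this into the definition of the torsion yields
\[
\overdiamond{D}_C\zeta=[C,\zeta]+v\zeta,\qquad\text{so}\qquad \overdiamond{D}_C h\zeta=h[C,\zeta],\quad \overdiamond{D}_C J\zeta=J[C,\zeta],
\]
where I used $vh=0$, $vJ=J$, $[C,J]=-J$ and Lemma~\ref{chx}.

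Next comes the key bookkeeping device. Writing the three curvatures as components of the classical curvature $K$, namely $\overdiamond{R}(\zeta,\eta)\xi=K(h\zeta,h\eta)J\xi$, $\overdiamond{P}(\zeta,\eta)\xi=K(h\zeta,J\eta)J\xi$ and $\overdiamond{Q}(\zeta,\eta)\xi=K(J\zeta,J\eta)J\xi$, I would expand the tensorial derivative $(\overdiamond{D}_C K)(X,Y)Z$ using $\overdiamond{D}_CW=[C,W]+vW$. Collecting the bracket contributions into a Lie-type term $\mathcal{L}_C K$ and the vertical-projector contributions separately, and using that $K$ commutes with $v$, the two terms arising from the third slot $Z=J\xi$ cancel and one is left with
\[
(\overdiamond{D}_C K)(X,Y)J\xi=(\mathcal{L}_C K)(X,Y)J\xi-K(vX,Y)J\xi-K(X,vY)J\xi.
\]
Substituting $(X,Y)=(h\zeta,h\eta),(h\zeta,J\eta),(J\zeta,J\eta)$ and using $vh=0$, $vJ=J$ produces exactly the coefficients $0,-1,-2$, i.e. the number of vertical entries among $X,Y$. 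Thus, provided $\mathcal{L}_C K=0$, the three identities follow at once.

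The main obstacle is therefore the homogeneity statement $\mathcal{L}_C K=0$, equivalently that $\overdiamond{D}$ is invariant under the flow of $C$:
\[
[C,\overdiamond{D}_X Y]=\overdiamond{D}_{[C,X]}Y+\overdiamond{D}_X[C,Y]\qquad(\forall\,X,Y\in\cppp).
\]
Both sides are Leibniz/tensorial in $X$ and $Y$, so it suffices to check this on the generators $\overdiamond{D}_{h\zeta}J\eta=v[h\zeta,J\eta]$ and $\overdiamond{D}_{J\zeta}J\eta=\overcirc{D}_{J\zeta}J\eta+\mathcal{C}(\zeta,\eta)$ of Theorem~\ref{hashi.nab.}. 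The horizontal case reduces to the Jacobi identity together with $[C,v]=0$, $[C,h\zeta]=h[C,\zeta]$ and $[C,J]=-J$. The vertical case is where the real work sits: after expanding by Jacobi one is left needing $[C,\mathcal{C}]=-2\mathcal{C}$, i.e. that $\mathcal{C}$ is homogeneous of degree $-1$. I would prove this from its definition $\Omega(\mathcal{C}(\zeta,\eta),\xi)=\tfrac{1}{2}(\mathcal{L}_{J\zeta}(J^{\ast}g))(\eta,\xi)$: a short computation gives $\mathcal{L}_C\Omega=\Omega$ (since $\mathcal{L}_C E=2E$ and $[C,J]=-J$) and $\mathcal{L}_C(J^{\ast}g)=0$, whence $\mathcal{L}_{J\zeta}(J^{\ast}g)$ is homogeneous; the non-degeneracy of $\Omega$ then yields $[C,\mathcal{C}]=-2\mathcal{C}$. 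Granting this, the vertical generator identity checks out and $\mathcal{L}_C K=0$.

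As a cross-check, and an alternative should one prefer to avoid the homogeneity of $K$, one can differentiate the explicit formulas of Proposition~\ref{hashicurv.} directly, using $\overdiamond{D}_C F=0$ (from \eqref{dhashi.f=0}) together with $\overdiamond{D}_C\mathcal{C}=-\mathcal{C}$ and $\overdiamond{D}_C\mathfrak{R}=\mathfrak{R}$, the last two following as above from $[C,\mathcal{C}]=-2\mathcal{C}$ and $[C,\mathfrak{R}]=0$. Then the v-curvature term $\mathcal{C}(F\mathcal{C}(\cdot,\cdot),\cdot)$ carries two factors of $\mathcal{C}$ and gives $-2\overdiamond{Q}$; the correction $\mathcal{C}(F\mathfrak{R}(\cdot,\cdot),\cdot)$ in $\overdiamond{R}$ carries one $\mathcal{C}$ and one $\mathfrak{R}$, whose degrees $-1$ and $+1$ cancel; the Berwald pieces $\overcirc{R},\overcirc{P}$ must be supplied with their own homogeneity $\overdiamond{D}_C\overcirc{R}=0$, $\overdiamond{D}_C\overcirc{P}=-\overcirc{P}$, which is the less clean part of this route and is why I favour the uniform argument via $\mathcal{L}_C K=0$.
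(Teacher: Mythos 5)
Your argument is correct, but it follows a genuinely different route from the paper's. The paper's proof is a one-liner: it specializes the second Bianchi identities of Proposition \ref{bianchihashi} at $\xi=S$, so that $J\xi=JS=C$ converts the terms $\,\overdiamond{D}_{J\xi}$ into $\,\overdiamond{D}_{C}$, and then annihilates everything else using $\mathcal{C}(\cdot,S)=0$, Proposition \ref{R,P,S hashi} and Lemma \ref{p,q.hashi.s}. (The paper cites items (d) and (e) of that proposition, but those contain only horizontal derivatives $\,\overdiamond{D}_{h\zeta}$; the operative identities are (f), (g), (h), so that citation appears to be a slip. Moreover, tracked with the standard sign convention for covariant derivatives of tensors, Lemma \ref{p,q.hashi.s} should read $(\,\overdiamond{D}_{J\zeta}\,\overdiamond{P})(\eta,S)=-\,\overdiamond{P}(\eta,\zeta)$, since the only surviving term is $-\,\overdiamond{P}(\eta,\,\overdiamond{D}_{J\zeta}S)$ with $\,\overdiamond{D}_{J\zeta}JS=vJ\zeta=J\zeta$; it is with this sign that (g) at $\xi=S$ returns $\,\overdiamond{D}_{C}\,\overdiamond{P}=-\,\overdiamond{P}$.) You instead argue from first principles via homogeneity, and every step checks out: $\,\overdiamond{T}(C,\cdot)=0$ follows from conditions (b), (c) of Theorem \ref{hashiconnc.} together with (\ref{c(s)}), giving $\,\overdiamond{D}_{C}\zeta=[C,\zeta]+v\zeta$ and hence $\,\overdiamond{D}_{C}h\zeta=h[C,\zeta]$, $\,\overdiamond{D}_{C}J\zeta=J[C,\zeta]$; the defect $(X,Y)\mapsto[C,\,\overdiamond{D}_{X}Y]-\,\overdiamond{D}_{[C,X]}Y-\,\overdiamond{D}_{X}[C,Y]$ is indeed tensorial (the non-tensorial terms cancel because $[C,X]f=C(Xf)-X(Cf)$), and the horizontal-second-slot case you left implicit reduces to the vertical one via $\,\overdiamond{D}F=0$ and the identity $[C,FW]=F([C,W]+W)$ for vertical $W$; your key lemma $[C,\mathcal{C}]=-2\mathcal{C}$ does follow from $\mathcal{L}_{C}\Omega=\Omega$, $\mathcal{L}_{C}(J^{\ast}g)=0$ and the nondegeneracy of $\Omega$, in agreement with the local $(-1)$-homogeneity of $C^{k}_{ij}$; and with $\mathcal{L}_{C}K=0$ the coefficients $0,-1,-2$ fall out as the count of vertical slots, since the two third-slot contributions cancel. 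As for what each approach buys: the paper's is essentially free once the Bianchi machinery and the $S$-evaluation lemmas are in place, but it inherits their sign bookkeeping; yours is independent of the Bianchi identities, explains conceptually why $\,\overdiamond{R}$, $\,\overdiamond{P}$, $\,\overdiamond{Q}$ are eigentensors of $\,\overdiamond{D}_{C}$ with eigenvalues $0,-1,-2$, and produces the useful byproducts $\,\overdiamond{D}_{C}\mathcal{C}=-\mathcal{C}$ and $\,\overdiamond{D}_{C}\mathfrak{R}=\mathfrak{R}$ along the way.
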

\begin{proof}
The result follows from   Lemma \ref{bianchihashi} \textbf{(d)} and  \textbf{(e)},   by putting $\xi =S$.
\end{proof}

We have the following formulae of Lie brackets based on  the  Hashiguchi connection $\,\overdiamond{D}$.

\begin{prop} For the  Hashiguchi connection $\,\overdiamond{D}$ and for all $\eta ,\xi \in \cppp$, we have the following properties:
\begin{description}
  \item[(a)] $[J\eta ,J\xi ]=J(\,\,\overdiamond{D}_{J\eta }\xi -\,\,\overdiamond{D}_{J\xi }\eta ).$
  \item[(b)] $[h\eta ,J\xi ]=J(\,\,\overdiamond{D}_{h\eta })\xi -h(\,\,\overdiamond{D}_{J\xi }\eta )+F{\mathcal{C}}(\eta ,\xi ).$
  \item[(c)] $[h\eta ,h\xi ]=h(\,\,\overdiamond{D}_{h\eta }\xi -\,\,\overdiamond{D}_{h\xi }\eta )-\mathfrak{R}(\eta ,\xi ).$
\end{description}
\end{prop}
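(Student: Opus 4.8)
The plan is to read off each bracket formula directly from the definition of the classical torsion $\overdiamond{T}(X,Y)=\overdiamond{D}_X Y-\overdiamond{D}_Y X-[X,Y]$, solved for $[X,Y]$, and then to collapse the covariant-derivative terms using the compatibility relations $\overdiamond{D}J=0$ and $\overdiamond{D}h=0$. Both of these hold because $\overdiamond{D}$ is a lift of the Barthel connection, as verified in the existence part of the proof of Theorem \ref{hashiconnc.}; they let me move $J$ and $h$ freely across $\overdiamond{D}$, i.e. $\overdiamond{D}_\zeta J\xi=J\overdiamond{D}_\zeta\xi$ and $\overdiamond{D}_\zeta h\xi=h\overdiamond{D}_\zeta\xi$. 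Each of the three assertions then matches one of the already-computed torsion values, so the whole statement is a packaging of the torsion data of $\overdiamond{D}$ into bracket form.

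For (a), I would take $X=J\eta$, $Y=J\xi$ and invoke axiom (b) of Theorem \ref{hashiconnc.}, namely $\overdiamond{T}(J\eta,J\xi)=0$, to obtain $[J\eta,J\xi]=\overdiamond{D}_{J\eta}J\xi-\overdiamond{D}_{J\xi}J\eta$; pulling $J$ outside via $\overdiamond{D}J=0$ gives the claimed $J(\overdiamond{D}_{J\eta}\xi-\overdiamond{D}_{J\xi}\eta)$. For (c), I would take $X=h\eta$, $Y=h\xi$ and use the (h)h-torsion value $\overdiamond{T}(h\eta,h\xi)=\mathfrak{R}(\eta,\xi)$ from the torsion lemma preceding Proposition \ref{hashicurv.}, rearranged to $[h\eta,h\xi]=\overdiamond{D}_{h\eta}h\xi-\overdiamond{D}_{h\xi}h\eta-\mathfrak{R}(\eta,\xi)$; applying $\overdiamond{D}h=0$ extracts $h$ and produces $h(\overdiamond{D}_{h\eta}\xi-\overdiamond{D}_{h\xi}\eta)-\mathfrak{R}(\eta,\xi)$. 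For (b), I would take $X=h\eta$, $Y=J\xi$ and use the (h)v-torsion value $\overdiamond{T}(h\eta,J\xi)=-F\mathcal{C}(\eta,\xi)$ from the same lemma; this yields $[h\eta,J\xi]=\overdiamond{D}_{h\eta}J\xi-\overdiamond{D}_{J\xi}h\eta+F\mathcal{C}(\eta,\xi)$, and applying $\overdiamond{D}J=0$ to the first term and $\overdiamond{D}h=0$ to the second gives exactly $J\overdiamond{D}_{h\eta}\xi-h\overdiamond{D}_{J\xi}\eta+F\mathcal{C}(\eta,\xi)$.

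There is no serious obstacle once the torsion tensors of $\overdiamond{D}$ are in hand: these identities are purely algebraic rearrangements. The only points requiring care are the bookkeeping of signs (the $+F\mathcal{C}$ in (b) and the $-\mathfrak{R}$ in (c) arise from transposing the torsion value across the equation) and the correct directionality of the compatibility relations in the mixed case, so that $J$ is extracted from the $\overdiamond{D}_{h\eta}$-term and $h$ from the $\overdiamond{D}_{J\xi}$-term. I would present the cases in the order (a), (c), (b), since (b) reuses both compatibility relations at once and is cleanest to verify after the two pure cases have fixed the pattern.
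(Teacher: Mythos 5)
Your proposal is correct, and every ingredient you invoke is already in place at the point where this proposition appears: condition \textbf{(b)} of Theorem \ref{hashiconnc.} gives $\,\overdiamond{T}(J\eta ,J\xi )=0$, the torsion lemma following Theorem \ref{hashi.nab.} gives $\,\overdiamond{T}(h\eta ,h\xi )=\mathfrak{R}(\eta ,\xi )$ and $\,\overdiamond{T}(h\eta ,J\xi )=-F\mathcal{C}(\eta ,\xi )$, and the existence part of the proof of Theorem \ref{hashiconnc.} establishes $\,\overdiamond{D}J=0$ and $\,\overdiamond{D}\Gamma=0$ (hence $\,\overdiamond{D}h=0$, since $h=\frac{1}{2}(I+\Gamma)$), which justifies commuting $J$ and $h$ across $\,\overdiamond{D}$ exactly as you do. So there is no circularity and no gap. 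Your route is, however, genuinely different from the paper's. The paper does not pass through the torsion tensors at all: it recomputes each bracket from scratch out of the explicit expressions of Theorem \ref{hashi.nab.} --- writing $\,\overdiamond{D}_{J\eta }J\xi =J[J\eta ,\xi ]+\mathcal{C}(\eta ,\xi )$, $\,\overdiamond{D}_{h\eta }J\xi =v[h\eta ,J\xi ]$, and (via $\,\overdiamond{D}F=0$) $\,\overdiamond{D}_{h\eta }h\xi =Fv[h\eta ,J\xi ]$ --- and then collapses the resulting Lie-bracket combinations using structural identities of the Finsler setting: the property $[J\eta ,J\xi ]=J[J\eta ,\xi ]-J[J\xi ,\eta ]$ coming from $[J,J]=0$ in case (a), the identity $h[J\xi ,v\eta ]=0$ in case (b), and the vanishing weak torsion $t(\eta ,\xi )=v[J\eta ,h\xi ]+v[h\eta ,J\xi ]-J[h\eta ,h\xi ]=0$ of the Barthel connection together with $\mathfrak{R}(\eta ,\xi )=-v[h\eta ,h\xi ]$ in case (c). What your approach buys is economy and transparency: it exposes the proposition as a pure algebraic repackaging of the torsion data of $\,\overdiamond{D}$, reusing computations the paper has already performed (indeed, the paper's proof of the torsion lemma contains essentially the same bracket manipulations it then repeats here). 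What the paper's approach buys is independence: its proof of this proposition does not rely on the torsion lemma, and it re-exhibits the geometric identities ($[J,J]=0$, $t=0$, the curvature formula for $\mathfrak{R}$) that drive the computation, which is arguably more instructive even if redundant. Your sign bookkeeping in (b) and (c) and the directionality of the two compatibility relations in the mixed case are handled correctly.
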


\begin{proof}~\par
\noindent \textbf{(a)} By Theorem \ref{hashi.nab.}, we get
\begin{eqnarray*}
 % \nonumber to remove numbering (before each equation)
   J(\,\,\overdiamond{D}_{J\eta }\xi -\,\,\overdiamond{D}_{J\xi }\eta )&=&\,\,\overdiamond{D}_{J\eta }J\xi -\,\,\overdiamond{D}_{J\xi }J\eta   \\
   &=&J[J\eta ,\xi ]-J[J\xi ,\eta ]\\
   &=&[J\eta ,J\xi ].
 \end{eqnarray*}
\noindent \textbf{(b)} Since the tensor $\mathcal{C}$ is symmetric, then we obtain
\begin{eqnarray*}
 % \nonumber to remove numbering (before each equation)
   J(\,\,\overdiamond{D}_{h\eta }\xi )-h(\,\,\overdiamond{D}_{J\xi }\eta )&=&\,\,\overdiamond{D}_{h\eta }J\xi -\,\,\overdiamond{D}_{J\xi }h\eta   \\
   &=&v[h\eta ,J\xi ]-h[J\xi ,\eta ] -F{\mathcal{C}}(\xi ,\eta ) \\
   &=&v[h\eta ,J\xi ]-h[J\xi ,\eta ]-F{\mathcal{C}}(\xi ,\eta ) \\
   &=&v[h\eta ,J\xi ]+h[h\eta ,J\xi ]-F{\mathcal{C}}(\xi ,\eta ) \\
   &=&[h\eta ,J\xi ]-F{\mathcal{C}}(\eta ,\xi ) .
 \end{eqnarray*}
 \noindent \textbf{(c)} Once again, by making use of the symmetry property of the tensor ${\mathcal{C}}$, we can write
\begin{eqnarray*}
 % \nonumber to remove numbering (before each equation)
   h(\,\overdiamond{D}_{h\eta }\xi -\,\overdiamond{D}_{h\xi }\eta )&=&\,\overdiamond{D}_{h\eta }h\xi -\,\overdiamond{D}_{h\xi }h\eta   \\
      &=&Fv[h\eta ,J\xi ]+Fv[J\eta ,h\xi ].
    \end{eqnarray*}
 Since  the torsion of the connection $\Gamma$  vanishes, then we have $$0=t(\eta ,\xi )=v[J\eta ,h\xi ]+v[h\eta ,J\xi ]-J[h\eta ,h\xi ].$$
 Which implies   $Fv[J\eta ,h\xi ]+Fv[h\eta ,J\xi ]=FJ[h\eta ,h\xi ]=h[h\eta ,h\xi ]$. Consequently, we have
 $$ h(\,\overdiamond{D}_{h\eta }\xi -\,\overdiamond{D}_{h\xi }\eta ) =h[h\eta ,h\xi ]=[h\eta ,h\xi ]-v[h\eta ,h\xi ]=
   [h\eta ,h\xi ]+\mathfrak{R}(\eta ,\xi ).$$
 It should be noted that we have used the fact  $\mathfrak{R}(\eta ,\xi )=-v[h\eta ,h\xi ]$, for example, see \cite{Nabil.2}.
\end{proof}

\section*{Concluding remarks}
Let's end this work by the following comments and remarks:
\begin{itemize}
\item   For a Finsler manifold $(M,E)$, following the  Klein-Grifone approach, we can
 associate   four fundamental linear connections {{canonically}}: the
Berwald  connection  $\overcirc{D}$  (Theorem  \ref{Th:Berwald_connec.}), the Cartan connection
$D$ (Theorem  \ref{Th:Cartan_connec.}), the Chern  connection $\overast{D}$
(Theorem  \ref{Th:Chern_connec.}) and   the  Hashiguchi connection $\,\overdiamond{D}$ (Theorem
\ref{hashiconnc.}). For each of these connections, the underlying nonlinear connection is the Barthel connection.

\item Define the $\C$-process by adding   $\C$ to the vertical counterpart of the connection, and the $\C'$-process by adding  $\C'$ to the horizontal counterpart. Now, we can convert one  Finsler connection   to some other connections, as follows:
 by Theorem
 \ref{hashiconnc.}, we observe that  the Hashiguchi connection $\,\overdiamond{D}$ is obtained
  from the Berwald connection $\overcirc{D}$ by $\C$-process. Moreover,
  by Theorem \ref{Th:Cartan_connec.},    the Cartan
   connection $D$ is obtained from the Hashiguchi connection
   by $\C'$-process. Now, applying the $\C'$-process  on Hashiguchi connection $\,\overdiamond{D}$, the Chern connection $\overast{D}$ is obtained. Then applying $\C$-process on Chern connection $\overast{D}$, the Cartan connection $D$ is obtained. That is, we have

 $$ D  \xlongleftarrow{\text {  $\C$-process }} \, \overast{D}  \xlongleftarrow{\text {  $\C'$-process }}  \, \overcirc{D}   \xlongrightarrow{\text {  $\C$-process }}   \, \overdiamond{D}    \xlongrightarrow{\text {  $\C'$-process }}  D.
$$

\item We offer a comparative analysis of the four fundamental Finsler connections in the KG-approach, which provides an intrinsic perspective on global Finsler geometry. The following table summarizes the key differences between these connections, including their associated canonical linear connections and geometric objects. For a detailed treatment of the Chern connection, refer to \cite{Chern}.
 \end{itemize}
\begin{landscape}
 \begin{center}{\bf{Table 1: The four fundamental Finsler Linear connections }}
\end{center}
\begin{center}
\small{\begin{tabular}
{|c|c|c|c|c|}\hline
&&&&\\
 {\bf Connection} &{\bf  Berwald: \,$\overcirc{D}$ }& {\bf Cartan:\,$D$ } &{\bf Chern:  \,\overast{D}  }
 &{\bf Hashiguchi: \,\overdiamond{D}}
\\[0.1 cm]\hline
&&&&\\
{\bf Expressions  } & $\overcirc{D}_{J\zeta }J\eta =J[J\kappa ,\eta ]$&
 $D_{J\kappa }J\eta =\overcirc{D}_{J\kappa }J\eta +\mathcal{C}(\kappa ,\eta )$&
$\,\overast{D}_{J\kappa }J\eta =J[J\kappa ,\eta ]$&
$\,\overdiamond{D}_{J\kappa }J\eta =J[J\kappa ,\eta ]+{\mathcal{C}}(\kappa ,\eta )$
\\[0.1 cm]
{\bf of connection} & $\overcirc{D}_{h\kappa }J\eta =v[h\kappa ,J\eta ]$&
$D_{h\kappa }J\eta =\overcirc{D}_{h\kappa }J\eta +\mathcal{C}'(\kappa ,\eta )$&
$\,\overast{D}_{h\kappa }J\eta =v[h\kappa ,J\eta ]+\mathcal{C}'(\kappa ,\eta )$&
      $\,\overdiamond{D}_{h\kappa }J\eta =v[h\kappa ,J\eta ]$\\
&&&&\\
& $\overcirc{D}F=0$&${D}F=0$&$\,\overast{D} F=0$&$\,\overdiamond{D} F=0$
\\[0.1 cm]\hline
&&&&\\
{\bf hh-torsions} & $\mathfrak{R}$& $\mathfrak{R}$& $\mathfrak{R}$&$\mathfrak{R}$
\\[0.1 cm]
{\bf} hv-torsions & $0$& $\mathcal{C}'-F\mathcal{C}$& $\mathcal{C}'$&$-F\mathcal{C}$
\\[0.1 cm]
{\bf  vv-torsions} & $0$& $0$&$0$&$0$
\\[0.1 cm]\hline
&&&&\\
{\bf h-curvature} & $\overcirc{R}$& $ R(\kappa ,\eta )\xi =\overcirc{R}(\kappa ,\eta )\xi +(D_{h\kappa }\mathcal{C}')(\eta ,\xi ) $& $\overast{R}(\kappa ,\eta )\xi =R(\kappa ,\eta )\xi $&$\overdiamond{R}(\kappa ,\eta )\xi =\overcirc{R}(\kappa ,\eta )\xi $
\\
&&$-(D_{h\eta }\mathcal{C}')(\kappa ,\xi )
      +\mathcal{C}'(F\mathcal{C}'(\kappa ,\xi ),\eta )$&$-\mathcal{C}(F\mathfrak{R}(\kappa ,\eta ),\xi )$&$+\mathcal{C}(F\mathfrak{R}(\kappa ,\eta ),\xi )$\\
    && $-\mathcal{C}'(F\mathcal{C}'(\eta ,\xi ),\kappa )+\mathcal{C}(F\mathfrak{R}(\kappa ,\eta ),\xi )$&&\\
{\bf hv-curvature} & $\overcirc{P}$& $ P(\kappa ,\eta )\xi  =\overcirc{P}(\kappa ,\eta )\xi +(D_{h\kappa }{\mathcal{C}})(\eta ,\xi )$& $\overast{P}(\kappa ,\eta )\xi =\overcirc{P}(\kappa ,\eta )\xi $&$\overdiamond{P}(\kappa ,\eta )\xi =\overcirc{P}(\kappa ,\eta )\xi $\\
&&$-(D_{J\eta }\mathcal{C}')(\kappa ,\xi )+\mathcal{C}(F\mathcal{C}'(\kappa ,\xi ),\eta ) $&$-(\,\,\overast{D}_{J\eta }\mathcal{C}')(\kappa ,\xi )$&$+(\,\,\overdiamond{D}_{h\kappa }\mathcal{C})(\eta ,\xi )$\\
&&$+{\mathcal{C}}(F\mathcal{C}'(\kappa ,\eta ),\xi )-\mathcal{C}'(F{\mathcal{C}}(\eta ,\xi ),\kappa )$&&\\
&&$-\mathcal{C}'(F{\mathcal{C}}(\kappa ,\eta ),\xi )$&&\\
{\bf v-curvature} & $0$& $ Q(\kappa ,\eta )\xi ={\mathcal{C}}(F{\mathcal{C}}(\kappa ,\xi ),\eta ) $& $0$&$\overdiamond{Q}=Q$\\
&&$-{\mathcal{C}}(F{\mathcal{C}}(\eta ,\xi ),\kappa )$&&\\\hline
{\bf v-metricity}& not v-metrical & v-metrical &
not v-metrical & v-metrical
\\[0.1 cm]
{\bf h-metricity}& not h-metrical&  h-metrical&  h-metrical&not h-metrical
\\[0.1 cm]\hline
\end{tabular}}
\end{center}
\end{landscape}

%%%%%%%%%%%%%%%%%%%%%%%%%%%%%%%%%%%%%%%%%%%%%%%%% {\bf Appendix. Local formulae} %%%%%%%%%%%%%%%%%%%%%

\section*{ Appendix: Some local formulas}

For completeness, we provide a concise overview of essential geometric objects' local expressions in this appendix. 

  Let   $(M,E)$ be a  Finsler manifold,   $(U,(x^{i}))$ be  a system  of local coordinates on
 $M$,  and $(\pi^{-1}(U),(x^i,y^i))$ the attached system of   coordinates on $TM$.

 We have the following objects:\\

 \noindent $(\pa_{i}):=(\frac{\pa}{\pa x^i})$: the natural bases of $T_{x}M,\, x\in
 M$,\\
 $(\paa_{i}):=(\frac{\pa}{\pa y^i})$: the natural bases of $V_{u}(TM),\, u\in
 TM$,\\
$(\pa_{i},\paa_{i})$: the natural bases of $T_{u}(TM)$,\\
$(\overline{\pa}_{i} )$: the natural bases of the fiber over $u$ in
$\p$.\\

\noindent $g_{ij}:=  \paa_{i} \paa_{j}E$: the components of the  metric tensor, where $E=\frac{1}{2} \textbf{L}^2$ is the energy function, and $\textbf{L}$ is the Finsler structure,
the
Finsler metric tensor,\\
$g^{hr}$:   the components of the inverse metric tensor,\\
$G^h:=\frac{1}{2}g^{hr}(y^r\partial_r\dot{\partial}_iE - \partial_iE)$: the components of the canonical spray,\\
$N^{h}_{i}:=\paa_{i}\,G^h$: the components of the canonical nonlinear connection,\\
$G^{h}_{ij}:=\paa_{j}\,N^h_{i}=\paa_{j}\paa_{i}\,G^h$: the coefficients of Berwald connection,\\
$(\delta_{i}):=(\pa_{i}-N^{h}_{i}\,\paa_{h})$: the adapted bases of
$H_{u}(TM)$,\\
$(\delta_{i}, \paa_{i})$: the bases of $T_{u}(TM)=H_{u}(TM)\oplus
V_{u}(TM)$. \\

\vspace{5pt}

\noindent $\gamma^{h}_{ij}:= \frac{1}{2}\,g^{h\ell}(\pa_{i}\,g_{\ell
j}+\pa_{j}\,g_{i\ell }- \pa_{\ell}\,g_{i j}
 )$,\\
  $C^{h}_{ij}:= \frac{1}{2}\,g^{h\ell}(\paa_{i}\,g_{\ell j}+\paa_{j}\,g_{i\ell }-
   \paa_{\ell}\,g_{i j})=
  \frac{1}{2}\,g^{h\ell}\,\paa_{i}\,g_{\ell j}$,\\
  $\Gamma^{h}_{ij}:= \frac{1}{2}\,g^{h\ell}(\delta_{i}\,g_{\ell j}+\delta_{j}\,g_{i\ell }-
  \delta_{\ell}\,g_{i j})$: the coefficients of Caratn connection.\\

The bundle moriphisms $\gamma$ and $\rho$ are acting on the bases as follows:
$$\gamma(\overline{\pa}_i)=\paa_i, \quad \rho(\pa_i)=\overline{\pa}_i,\quad\rho(\paa_i)=0.$$
 The local formula of the almost tangent structure $J$ is given by $J=\paa_i \otimes dx^i$ and we have
$$ J(\pa_{i})= \paa_{i}, \quad  J(\paa_{i})=0.$$
The horizontal and vertical protections are given, locally, by
$$
h=  dx^{i} \otimes \pa_{i}- N^{i}_{j}\, dx^{j}
\otimes \paa_{i} , \quad  v=dy^{i} \otimes \paa_{i}+
N^{i}_{j}\, dx^{j} \otimes \paa_{i}. $$
Moreover, we have
$$h(\pa_i)=\delta_i,\quad h(\paa_i)=0,\quad v(\pa_i)=0,\quad v(\paa_i)=\paa_i.$$
The local formula of $F$ can be found in  \cite[Eq. (I.70)]{r21}, moreover, we have
$$F(\pa_i)= N^j_i\pa_j-N^h_iN^j_h\paa_j-\paa_i , \quad F(\paa_i)=  \delta_i,\quad F(\delta_i)=- \paa_i .$$

\newpage
\begin{center}
\textbf{Table 2: A comparison between the fundamental connections in KG- and PB-approachs in Finsler geoemtry
}\end{center}
\begin{center}
\begin{tabular}{|l|l|l|}
\hline
\textbf{The object} & \textbf{KG-approach} & \textbf{PB-approach} \\ \hline
     The Universe                 &     $(T\T M,\pi_{\T M},\T M)$        &    $(\pi^{-1}(\T M),P,\T M) $        \\ \hline
         Fibers                   &        $T_u\T M$, $u\in \T M$     &        $\{v\}\times T_xM$, $\pi(v)=x$      \\ \hline
 Berwald connection $\overcirc{D}$  &   $\overcirc{D}_{\paa_j}\paa_i=0$     &   $\overcirc{D}_{{\paa}_j}\overline{\pa}_i=0$         \\
        &$\overcirc{D}_{\delta_j}\paa_i=G^k_{ij}\paa_k $  &  $\overcirc{D}_{\delta_j}\overline{\pa}_i=G^k_{ij}\overline{\pa}_k $  \\
         & $\overcirc{D}_{\paa_j}\delta_i=0$   & \\
     &$\overcirc{D}_{\delta_j}\delta_i=G^k_{ij}\delta_k$& \\ \hline
     Cartan connection   $ {D}$ &   $ {D}_{\paa_j}\paa_i= {C}^k_{ij}\paa_k $     &   $ {D}_{{\paa}_j}\overline{\pa}_i= {C}^k_{ij}\overline{\pa}_k $         \\
        &$ {D}_{\delta_j}\paa_i=\Gamma^k_{ij}\paa_k $  &  $ {D}_{\delta_j}\overline{\pa}_i=\Gamma^k_{ij}\overline{\pa}_k $  \\
         & $ {D}_{\paa_j}\delta_i= {C}^k_{ij}\delta_k $   & \\
     &$ {D}_{\delta_j}\delta_i=\Gamma^k_{ij}\delta_k$& \\ \hline
     Chern  connection $ \overast{D}$  &   $ \overast{D}_{\paa_j}\paa_i=0$     &   $ \overast{D}_{{\paa}_j}\overline{\pa}_i =0$         \\
        &$ \overast{D}_{\delta_j}\paa_i=\Gamma^k_{ij}\paa_k $  &  $ \overast{D}_{\delta_j}\overline{\pa}_i=\Gamma^k_{ij}\overline{\pa}_k $  \\
       & $ \overast{D}_{\paa_j}\delta_i= 0 $   & \\
     &$ \overast{D}_{\delta_j}\delta_i=\Gamma^k_{ij}\delta_k$& \\ \hline
      Hashiguchi connection  $\overdiamond{D}$ &   $\overdiamond{D}_{\paa_j}\paa_i={C}^k_{ij}\paa_k$     &   $\overdiamond{D}_{{\paa}_j}\overline{\pa}_i={C}^k_{ij}\overline{\pa}_k$         \\
        &$\overdiamond{D}_{\delta_j}\paa_i=G^k_{ij}\paa_k $  &  $\overdiamond{D}_{\delta_j}\overline{\pa}_i=G^k_{ij}\overline{\pa}_k $  \\
         & $\overdiamond{D}_{\paa_j}\delta_i={C}^k_{ij}\delta_k$   & \\
     &$\overdiamond{D}_{\delta_j}\delta_i=G^k_{ij}\delta_k$& \\ \hline
\end{tabular}
\end{center}

\subsection*{Local formulas of the curvature tensors}
The curvature tensors associated with the fundamental connections on the double tangent bundle annihilate vertical vectors. Consequently, their non-trivial components are confined to the horizontal bundle, as illustrated by the Berwald h-curvature:
$$\overcirc{R}(\delta_i,\paa_j)\delta_k=0, \quad \overcirc{R}(\delta_i,\paa_j)\paa_k=0, \quad \overcirc{R}(\delta_i,\delta_j)\delta_k={R^{\circ}}^{h}_{ijk}\paa_h.$$
In what follow, we list the formulas of the components of the torsion and curvature tensor sof the four fundamental tensors on the double tangent bundle.\\

For the Berwald connection, we have\,: \\
$(v)h$-torsion\,:
${R^{\circ}}^{i}_{jk}=\delta_{k}G^{i}_{j}-\delta_{j}G^{i}_{k}$,\\
$h$-curvature\,:  ${R^{\circ}}^{i}_{hjk}=\mathfrak{A}_{jk}\{
\delta_{k}G^{i}_{hj}+G^{m}_{hj} G^{i}_{mk}\}$, where $\mathfrak{A}_{jk}X_{jk}=X_{jk}-X_{kj}$,\\
$v$-curvature\,:
${S^{\circ}}^{i}_{hjk}\equiv 0$,\\
$hv$-curvature\,:
${P^{\circ}}^{i}_{hjk}=\paa_{k}G^{i}_{hj}=:G^{i}_{hjk}$.

\vspace{7pt}
\par
For the Cartan connection, we have\,: \\
$(v)h$-torsion\,:
$R^{i}_{jk}=\delta_{k}G^{i}_{j}-\delta_{j}G^{i}_{k}={R^{\circ}}^{i}_{jk}$,\\
$(v)hv$-torsion\,: $P^{i}_{jk}=
G^{i}_{jk}-\Gamma^{i}_{jk}$,\\
$(h)hv$-torsion\,: $C^{i}_{jk}=\frac{1}{2} g^{ri}\paa_{r}g_{jk}$,\\
$h$-curvature\,:  $R^{i}_{hjk}=\mathfrak{A}_{jk}\{
\delta_{k}\Gamma^{i}_{hj}+\Gamma^{m}_{hj}
\Gamma^{i}_{mk}\}-C^{i}_{hm}
R^{m}_{jk}$,\\
$hv$-curvature\,:
$P^{i}_{hjk}=\paa_{k}\Gamma^{i}_{hj}-C^{i}_{hk|j}+C^{i}_{hm}P^{m}_{jk}$, where the symbol "$|$" refers to the horizontal covarient derivative with regard the Cartan connection,\\
$v$-curvature\,:
$S^{i}_{hjk}=C^{m}_{hk}C^{i}_{mj}-C^{m}_{hj}C^{i}_{mk}=\mathfrak{A}_{jk}\{
C^{m}_{hk}C^{i}_{mj}\}$.

 \vspace{5pt}
\par

For the Chern connection, we have\,: \\
$(v)h$-torsion\,: $\,R^{\ast
i}_{\,\,\,jk}=\delta_{k}\,G^{i}_{j}-\delta_{j}\,G^{i}_{k}=R^{i}_{jk}$,\\
$(v)hv$-torsion\,: $\,P^{\ast i}_{\,\,\,jk}=
G^{i}_{jk}-\Gamma^{i}_{jk}$,\\
$h$-curvature\,: $\,R^{\ast i}_{\,\,\,hjk}=\mathfrak{A}_{jk}\{
\delta_{k}\,\Gamma^{i}_{hj}+\Gamma^{m}_{hj} \,\Gamma^{i}_{mk}\}$,\\
$hv$-curvature\,: $\,P^{\ast
i}_{\,\,\,hjk}=\paa_{k}\,\Gamma^{i}_{hj}$.

 \vspace{7pt}
\par
For the Hashiguchi connection, we have \\
$(v)h$-torsion $R^{\star i}_{\,\,\,jk}=\delta_{k}\,G^{i}_{j}-\delta_{j}\,G^{i}_{k}=R^{i}_{jk}$,\\
$(h)hv$-torsion $C^{\star i}_{\,\,\,jk}={1}/{2}\{ g^{ri}\,\paa_{r}\,g_{jk}\}=C^{i}_{jk}$,\\
$h$-curvature  $R^{\star i}_{\,\,\,hjk}={R^{\circ}}^{i}_{hjk}+C^{i}_{hm}\,
R^{m}_{jk}$,\\
$hv$-curvature
$P^{\star i}_{\,\,hjk}=\paa_{k}\,G^{i}_{hj}-C^{i}_{hk\stackrel{\star}|j}$, where the symbol "$\stackrel{\star}|$" refers to the horizontal covarient derivative with regard the Hashiguchi connection,\\\\
$v$-curvature  $S^{\star i}_{\,\,\,hjk}=\mathfrak{A}_{jk}\{
C^{m}_{hk}\,C^{i}_{mj}\}=S^{i}_{hjk}$.

%%%%%%%%%%%%%%%%%%%%%%%%%%%%%%%%%%%%%%%%%%%%%%%%%%%%%%%%%%%%%%%%%%%%%%%%%%%%%%%%%%%%%%%%%%%%%%%%%%%%%%%%%%%%%%%%
\providecommand{\bysame}{\leavevmode\hbox
to3em{\hrulefill}\thinspace}
\providecommand{\MR}{\relax\ifhmode\unskip\space\fi MR }
% \MRhref is called by the amsart/book/proc definition of \MR.
\providecommand{\MRhref}[2]{%
  \href{http://www.ams.org/mathscinet-getitem?mr=#1}{#2}
} \providecommand{\href}[2]{#2}

\end{document}